%Version Définitive JHRS. 13/02/2024
%%%%%%%%%%%%%%%%%%%%%%%%%%%
%

%%%%%%%%%%%%%%%%%%%%%%%%%%%%%%%%%%%%%%%
%\documentclass[10pt,oneside,english]{amsart}
%\documentclass[10pt,english]{amsart}
%\documentclass[12pt,english]{amsart}
\documentclass[a4paper,10pt,english,reqno]{amsart} 

\DeclareSymbolFontAlphabet{\mathcal}{symbols}

\usepackage{multirow}
\usepackage{multicol}

\usepackage{soul} % para tachar palabras
%Pour un texte en fran\c cais
%\usepackage{lmodern} % ou ae, ou times etc.		%polices fonctorielles
%\usepackage[francais]{babel}    	%Texte en fran\c cais
%\usepackage[utf8]{inputenc}   	%Encodage d'entrée
%\usepackage[T1]{fontenc}		%Encodage de sortie
%%%%%%%%%%%%%%%%

\usepackage[T1]{fontenc}
\usepackage{tikz}
\usetikzlibrary{fadings}
\usepackage[a4paper]{geometry}
\geometry{verbose,lmargin=2.6cm,rmargin=3.6cm}
%%%%%%%%%%%%
%Pour geogebra
%%%%%%%%%%%%%
\usepackage{pgfplots}
\pgfplotsset{compat=1.13}
\usepackage{mathrsfs}
\usetikzlibrary{arrows}
%%%%%%%%%%%%%
%%Pour :=
\usepackage{mathtools}
%%%%%%%%%%
%%Pour les boites
\usepackage{framed}
%%%%%%%%%
%Pour les tableaux
\usepackage{diagbox,multirow}
%%%%%%%%%%%%
\usepackage{color}
\usepackage{amstext}
\usepackage{amsthm}
\usepackage{amssymb}
\usepackage{varioref}
\usepackage{amssymb}
\usepackage{enumerate}
\usepackage[cmtip,all]{xy}
\usepackage{pifont}
\usepackage{graphicx}
\usepackage{url}
%%%%%%%%%%%%%%
%instead of xymatrix
%\usepackage{tikz-cd}
%%%%Placement figures
\usepackage{float}
%%%%%%%%%%Personnalisation itemize
\usepackage[shortlabels]{enumitem}
\newcounter{saveenum}
%%%%%%%%%%%%%%%%%%%%%%%%%%%
\usepackage[pagebackref=true]{hyperref}
\usepackage{hyperref}
\usepackage{backref}
\hypersetup{
    colorlinks,
    citecolor=blue,
    filecolor=black,
    linkcolor=blue,
    urlcolor=black
}

%\hypersetup{colorlinks=true}
%       backref=true,                           % Permet d'ajouter des liens dans
%       pagebackref=true,                       % les bibliographies
%       hyperindex=true,
%        colorlinks=true,                        % Colorise les liens.
%       breaklinks=true,                        % Permet le retour à la ligne dans les liens trop longs.
%       urlcolor= blue,                         % Couleur des hyperliens.
%       linkcolor= blue}
%\usepackage[backref=page]{hyperref}
%\usepackage[pagebackref]{hyperref}
%\usepackage[backref=page]{hyperref}
%
%%%%%%%%%%%%%%%%
\makeatletter
\@namedef{subjclassname@2020}{%
  \textup{2020} Mathematics Subject Classification}
\makeatother
%%%%%%%%%%%%%%%%%
%  Start of metadata  
%
%%%%%%%%%%%%%%%%%%%%%%%%%%%
\title[A reasonable notion of dimension]{A reasonable notion of dimension for singular intersection homology}

\date{\today}

\author{David Chataur}
\address{Lamfa\\
Universit\'e de Picardie Jules Verne\\
33, rue Saint-Leu\\
80039 Amiens Cedex~1\\
         France}
\email{David.Chataur@u-picardie.fr}

\author{Martintxo Saralegi-Aranguren}
\address{Laboratoire de Math{\'e}matiques de Lens\\  
      EA 2462 \\
      Universit\'e d'Artois\\
         SP18, rue Jean Souvraz\\
          62307 Lens Cedex\\
         France}
\email{martin.saraleguiaranguren@univ-artois.fr}

\author{Daniel Tanr\'e}
\address{D\'epartement de Math{\'e}matiques\\
         UMR-CNRS 8524 \\
         Universit\'e de Lille\\
         59655 Villeneuve d'Ascq Cedex\\
         France}
\email{Daniel.Tanre@univ-lille.fr}

\thanks{The first author was supported by the research project ANR-18-CE93-0002  ``OCHOTO'' . 
The third author was partially supported by 
the Proyecto PID2020-114474GB-100
and 
the ANR-11-LABX-0007-01  ``CEMPI''}

\subjclass[2020]{55N33, 55M10, 14F43}

\keywords{Intersection homology;  Topological invariance; Pseudo-barycentric subdivision}

% \arxivreference{}
% \arxivpassword{}
%%%%%%%%%%%%%%%%%%%%%%
%
%End of metadata
%
%%%%%%%%%%%%%%%%%%%%
%To indent subsections in table of contents
\makeatletter
\renewcommand\l@subsection{\@tocline{2}{0pt}{2pc}{5pc}{}}
\renewcommand\l@subsubsection{\@tocline{3}{0pt}{4pc}{10pc}{}}
\makeatother
%\makeatletter

%%%%%%%%%%%%%%%%%%%%%%%%%%%%%% Textclass specific LaTeX commands.
%%%%%%%%%%%%%%%%%
\theoremstyle{plain}

\newtheorem{theoremv}{Main Theorem}

\newtheorem{proposition}{Proposition}[section]
\newtheorem{theoremb}[proposition]{Theorem}
\newtheorem{lemma}[proposition]{Lemma}
\newtheorem{corollary}[proposition]{Corollary}

\theoremstyle{definition}
\newtheorem{definition}[proposition]{Definition}
\newtheorem{example}[proposition]{Example}

\newtheorem{remark}[proposition]{Remark}

\numberwithin{equation}{section}
%%%%%%%%%%%%%%%%%%%%%%%%%
%

%%%%%%%%%%%%%%%%%%%%%%%%%%%%Nouvelles commandes%%%%%%%%%%%%%%%%%%%%%%%
%%%%%%%%%%%%%%%%%%%%%%%%%%%%%%%%%
%%%%%%%%%%%%%%%%%%%%%

%%%%%%%%%%%%%%%%%%%%%%%%%%%%%%%
%%%%%%%%%%%%%%%%%%%%%%%%%%%%%%%%%

\newcommand{\secref}[1]{Section~\ref{#1}}
\newcommand{\subsecref}[1]{Subsection~\ref{#1}}
\newcommand{\thmref}[1]{Theorem~\ref{#1}}
\newcommand{\propref}[1]{Proposition~\ref{#1}}
\newcommand{\lemref}[1]{Lemma~\ref{#1}}
\newcommand{\corref}[1]{Corollary~\ref{#1}}

\newcommand{\exemref}[1]{Example~\ref{#1}}

\newcommand{\defref}[1]{Definition~\ref{#1}}

%%%%%%%%%%%%%%%%%%%%%%%%%%%%%%%%
\setcounter{tocdepth}{1}
\setcounter{secnumdepth}{2}
%
%%%%%%%%%%%%%%%%%%%%%%%%%%%%

%%%%%%%%%%%%%%%%%%%%%%%%%%%%%%%

\def\R{{\mathbb R}}

\def\ov{\overline}

%%%%%%%%%%%%%%%%%%%%%

\def\cAb{{\mathcal Ab}}
\def\cB{{\mathcal B}}
\def\cC{{\mathcal C}}

\def\cF{{\mathcal F}}

\def\cP{{\mathcal P}}
\def\cT{{\mathcal T}}

\def\cS{{\mathcal S}}
\def\cU{{\mathcal U}}

%%%%%%%%%%%%%%%%%%%%%%%%

\def\crP{{\mathscr P}}

%%%%%%%%%%%%

\def\1{{\mathbf 1}}
%%%%%%%%%%%%%

%%%%%%

\def\tc{{\mathtt c}}

\def\tv{{\mathtt v}}
\def\tw{{\mathtt w}}
%%%%%%%%%

%

%%%%%%%%%%%

%%%%%%%%%%%

\def\N{\mathbb{N}}

\def\R{\mathbb{R}}
\def\Z{\mathbb{Z}}

%%%%%%%%%%%%%%%%%

\def\im{{\rm Im\,}}
\def\rank{{\rm rank}\,}

%%%%%%%%%%%%%%%%

\def\id{{\rm id}}

%%%%%%%%%%%%%%%%%%%%%

\def\codim{{\rm codim\,}}

\def\sing{{\rm Sing}}
\def\singf{{\rm Sing}^{\cF}}

\def\pr{{\rm pr}}
%%%%%%%%%%%

%\def\IcI{{\cI\!}}

%%%%%%

%%%%%%%%%%%%%
%\DeclareMathOperator{\colim}{colim}

%%%%%%%%%%%%
%

%%%%%%%%%%%%%%%

\def\sd{{\rm sd}}
\def\rc{{\mathring{\tc}}}

%\def\tdelta{{\widetilde{\delta}}}

%%%%%%%%%%%%%%%%%%%

%%%%%%%

%\def\DKN{\Delta[\ov{\sigma}]\xrightarrow[\ttN(\ttP)]{} \Psi_{K}}
%
\def\menos{\backslash}

\newcounter{ejemplo}

\newcounter{figura}

%%%%%%%%%%%%%%

%%%%%%%%%%%%%%%%%%
%\def\tkp{{\mathtt K}(R,n,\ttP,\ov{p})}

%\def\tkq{{\mathtt K}(R,m,\ttP,\ov{q})}

%%%%%%%%Macro Old writing

\def\diam{{\rm diam}}
\def\dist{{\rm dist}}
%%%%%%%%%%%%%%%%%%%%
%New

\def\eps{\varepsilon}
 \newcommand{\inte}[1]{\mathring{#1}} 
% \newcommand{\inte}[1]{\overset {\circ}{#1}} 
%%%%%%%%%%%%%%%%%%%
%Martin
\renewcommand\1{\hbox{\ding{192}}}

%%%%%%%%%%%%%%%%%%%%%
%End of user-defined macros
%
%%%%%%%%%%%%%%%%%%%%%%%

\begin{document} 
\begin{abstract} 
M. Goresky and R. MacPherson intersection homology is also  defined from the singular chain complex of a
filtered space by H. King, with a key formula to make selections among singular simplexes.
This formula needs a notion of dimension for subspaces $S$ of an Euclidean simplex, which is usually taken as the 
smallest dimension of the skeleta containing~$S$.
Later, P. Gajer employed another dimension based on the dimension of polyhedra containing $S$. 
This last one allows traces of pullbacks of singular strata in the interior of the domain of a singular simplex.

In this work, we prove that the two corresponding intersection homologies are isomorphic
for Siebenmann's CS sets. In terms of King's paper, 
this means that  polyhedral dimension is a ``reasonable''  dimension.
The proof uses a Mayer-Vietoris argument which needs an adapted subdivision. 
With the polyhedral dimension, that is a subtle issue. 
General position arguments are not sufficient and we introduce  strong
general position. 
With it,  a stability is added  to the generic character and we can do an inductive cutting of each 
singular simplex. 
This decomposition is realised with pseudo-barycentric subdivisions 
where the new vertices are not barycentres but close points of them.
\end{abstract}

%%%%%%%%%%%%%%%%%%%

\maketitle
%%%%%%%%%%%%%%%%%%%
\tableofcontents

%%%%%%%%%%%
%

%
%%%%%%%%%%%%%%%%%%%
\section*{Introduction}

Intersection homology was introduced by Goresky and MacPherson to restore Poincar\'e duality
for some singular spaces called  pseudomanifolds.
They first defined it for PL-pseudomanifolds in \cite{GM1} and extended it to
topological pseudomanifolds in \cite{GM2} by using a derived category of complexes of sheaves. 
A presentation of intersection homology directly from the singular chain complex of a filtered space and its 
concordance with the initial definition is made by H. King in \cite{MR800845} for CS sets (\defref{def:csset}).
The starting point is a selection of some specific singular simplexes, called $\ov{p}$-allowable, 
from a perversity introduced as a sequence of integers 
$\ov{p}(n)$ (cf. \defref{def:perversite})
or as a map $\ov{p}\colon \cS_{X}\to {\Z}$, defined on the set of strata of $X$ and
taking the value 0 on the regular strata (cf. \defref{def:perversitegen}).
Before stating it formally in \defref{def:homotopygeom},  
let's contemplate the key formula for a stratum $S$ and a simplex $\sigma\colon \Delta^\ell\to X$,
\begin{equation}\label{equa:keyfirst}
\dim \sigma^{-1}S\leq \dim \Delta-\codim S+\ov{p}(S).
\end{equation}
For regular strata,  this inequality  reduces to
$\dim \sigma^{-1}S\leq \dim \Delta$ which is always true for any notion of dimension compatible with 
the inclusions of sets. So we can restrict ourselves to the singular strata.
Notice that \eqref{equa:keyfirst} contains  three different kinds of dimension:
\begin{itemize}
\item $\dim \Delta$ refers to the classical dimension of an Euclidean simplex;
\item $\codim S$ comes from the filtered dimension of $X$ as  in \defref{def:espacefiltre};
\item $\dim \sigma^{-1}S$ has to be specified.
\end{itemize}

\smallskip
Originally in \cite{MR800845}, King replaces  \eqref{equa:keyfirst} by ``$\sigma^{-1}S$ is included in the 
($\dim \Delta-\codim S+\ov{p}(S)$)-skeleton of $\Delta$''; i.e., the  dimension of $\sigma^{-1}S$ comes from the skeleta of $\Delta$.
This definition works perfectly and this is a common choice for many works in intersection homology, 
see the book of G. Friedman (\cite{FriedmanBook}) for an extensive repertoire of its properties.
It is easy to notice that, for a simplex $\Delta$ of dimension $\ell\geq 2$ and a perversity $\ov{p}\leq \ov{t}$,
the set $\sigma^{-1}S$ is included in the $(\ell-2)$-skeleton of $\Delta$. 
In  \cite{CST1,CST3}, another chain complex is built from singular simplexes $\sigma\colon \Delta\to X$, with a filtration
$\Delta=\Delta_{0}\ast\dots\ast\Delta_{n}$ verifying $\sigma^{-1}X_{i}=\Delta_{0}\ast\dots\ast\Delta_{i}$.
They form a simplicial set denoted by $\singf X$ that is not a Kan complex in general and allow a blown-up cohomology
(\cite{CST4}) which gives  a Poincar\'e duality (\cite{CST2}) with cap products.
For any of these two choices, all the information on the singular
strata are contained in the boundary of $\Delta$. 
This is not suitable if we are looking for a notion of intersection homotopy groups.
To achieve a presentation of intersection homology which fits with homotopy groups,
Gajer uses in \cite{MR1404919} a notion of dimension 
which allows traces of the pull-back of the singular strata in the interior of the singular simplexes.
For that, one says that a subspace  of an Euclidean simplex,
\emph{$A\subset \Delta$, is of  {dimension} less than or equal to $\ell$ 
if $A$ is included in a polyhedron $Q$ with $\dim Q\leq \ell$.}
Call it the \emph{polyhedral dimension.}
 
 \smallskip
Here, a $\ov{p}$-allowable simplex is a singular simplex verifying  \eqref{equa:keyfirst} with the polyhedral
dimension for $\sigma^{-1}S$. A $\ov{p}$-allowable chain is a linear combination of 
$\ov{p}$-allowable simplexes and the $\ov{p}$-intersection chain complex is formed of 
chains $\xi$ such that $\xi$ and its boundary $\partial \xi$ are $\ov{p}$-allowable chains.
In this work, we prove  that the homology of this complex coincides with the initial intersection homology,
for Siebenmann's CS sets (\defref{def:csset}). 
To do this, we use a method initiated by King (see \thmref{thm:gregtransformationnaturelle}) who gives
a series of properties which guarantee such identification. 
On this problem, King wrote
\emph{if one changes the above definition to any reasonable notion of dimension, 
one ends up with the same intersection homology groups.}
Taking this sentence as a definition for ``reasonable'' and the  construction of King as a reference, we can state the main result of this work (\thmref{thm:intersecgajer}) as follows.

\begin{theoremv}
The notion of polyhedral dimension is reasonable.
\end{theoremv}

In the proof of the Main Theorem, a ``sensitive issue'' is the existence of subdivisions 
for having a Mayer-Vietoris sequence.
In classical singular homology, they are the barycentric subdivisions which induce a chain map homotopic to the identity.
In intersection homology, we have to prove the existence of subdivisions keeping the $\ov{p}$-allowability property.
For our present situation, this is a subtle issue.
To illustrate it, consider a filtered space, $X$, of singular set $\Sigma$, and 
a singular simplex $\sigma \colon \Delta^2 \to X$ such that $\sigma^{-1}\Sigma = b_{\sigma}$, 
where $b_{\sigma}$ is the  barycentre of $\Delta^2$. 
The simplex $\sigma$ is $\ov t$-allowable  for the top perversity $\ov{t}$. 
But in the barycentric subdivision, the edges containing $b_{\sigma}$ are no more of $\ov t$-intersection.
To avoid this phenomenon, we act in an inductive way, as in the barycentric subdivision, but 
now the new vertices are not necessarily the barycentres but close points  to them. 
How to choose these points such that any element of this pseudo-barycentric subdivision 
is  a $\ov{p}$-allowable simplex?
At each step,  we have two simplexes 
 and we  slightly move one of them to ensure that the new simplexes are in general position. 
We can realize this operation if our process is  \emph{generic}. 
But, at the same time, we need that the simplexes of the previous steps remain in general position. This is a \emph{stability} condition.
The classical notion of general position does not meet these requirements (\exemref{exam:passtable})
and 
we introduce the \emph{notion of strong general position} in \defref{def:strongGP}.
Such subdivisions are called \emph{pseudo-barycentric subdivisions.}
In Propositions~\ref{prop:ExisB}, \ref{prop:sd} and \ref{prop:sdT}, 
we show that pseudo-barycentric subdivisions  preserving the $\ov{p}$-allowability exist, that they induce a chain
map homotopic to the identity and give a small simplexes $\ov{p}$-intersection theorem.

\smallskip
The preservation of $\ov{p}$-allowability  is a significative step but we also have to take care of $\ov{p}$-intersection
chains. For that, we prove that
the allowability default of the boundary of an allowable simplex is characterized by the existence of one particular
face, that we call the \emph{bad face.} 
The final stage for the existence of a Mayer-Vietoris sequence is a characterisation
of these bad faces in the case of a pseudo-barycentric subdivision, cf \propref{bad-1}. 
The architecture of the rest of the proof comes from King's original theorem (\cite[Theorem 10]{MR800845}).
The properties are now in place and the verification of the hypotheses of this theorem follows a classic pattern.

\smallskip
This is time to warn the reader on a possible confusion: 
the main part of \cite{MR1404919} is devoted to the definition and investigation
of a simplicial set $I_{\ov{p}}X$ associated to a filtered space and a perversity. It is known (\cite{MR1489215})
that the homology of $I_{\ov{p}}X$ is not the $\ov{p}$-intersection homology of~$X$.
The simplicial set $I_{\ov{p}}X$ does not appear in this work. 
We study it in  \cite{CST9} with a particular attention to its homotopy groups, 
the possibility of their topological invariance  
in terms of the  space $X$ and a Hurewicz theorem between them and $\ov{p}$-intersection homology.
 
% 
%%%%%%%%%%%%%%%%%%%%%%%%%%%%%%

\smallskip
\paragraph{\bf Outline of the paper}
\secref{sec:rappel} is a recall of the notions of filtered spaces, CS sets and perversities.
Intersection homology defined from the polyhedral dimension is presented in \secref{sec:lesdeux}. 
This section also contains
the computation for the cone and the compatibility with the relation of homotopy.
Strong general position is studied in \secref{sec:strong} and applied to the existence of pseudo-barycentric subdivisions
in \secref{sec:EPBS}.
The existence of Mayer-Vietoris exact sequences is established in \secref{sec:MV}.
Finally, the Main Theorem is proven in \secref{sec:twointersec}.

\medskip
\paragraph{\bf Notation.}
In the text, the letter $R$ denotes a Dedekind ring and $G$ an $R$-module. 
The singular chain complex of a topological space, $X$,
is denoted by $C_{*}(X;G)$, or $C_{*}(X)$ if there is no ambiguity.
The domain of a simplex $\sigma\in \sing\,X$ is denoted by
 $\Delta^\ell$, where $\ell$ is its dimension,
or  $\Delta_{\sigma}$, or simply $\Delta$,
depending on the parameter concerned at the place where it is used.
We denote by $\inte\Delta=\Delta\menos \partial \Delta$  the interior of $\Delta$.
The notation $V\triangleleft W$ means that $V$ is a face of a polyhedron $W$.
The word ``space'' means topological space.

%

%%%%%%%%%%%%%%
%%%%%%%%%%%%%%%
\section{Some reminders}\label{sec:rappel}

\begin{definition}\label{def:espacefiltre} 
A \emph{filtered space} is a  Hausdorff space,
$X$, endowed with a filtration by closed subspaces,
$$X_0\subseteq X_1\subseteq\ldots X_{n-1}\subsetneq X_n=X,$$
The
\emph{dimension} of $X$ is denoted by $\dim X=n$. 
The connected components, $S$, of $X_{i}\backslash X_{i-1}$ are the \emph{strata} of  $X$ 
and we write $\dim S=i$ and $\codim S=\dim X-\dim S$. 
(These dimensions can be formal and not necessarily related to a notion of geometrical dimension.)

The strata of  $X_n\backslash X_{n-1}$ are \emph{regular strata}; the other ones are \emph{singular strata}.  
The family of non-empty strata is denoted by $\cS_{X}$ (or $\cS$ if there is no ambiguity). 
The subspace $\Sigma _{X}=X_{n-1}$ is  the \emph{singular set,} sometimes also denoted by $\Sigma$.
Its complementary subset $X\menos \Sigma_{X}$ is called the \emph{regular subspace.}
The filtered space is said of \emph{locally finite stratification} if 
every point has a neighborhood that intersects only a finite number of strata.
\end{definition} 

 An open subset $U \subset X$ is a filtered space for the \emph{induced filtration} given by
$U_i = U \cap X_{i}$.
The product $M\times X$ with  a topological space is a filtered space for the \emph{product filtration} defined by
$\left(M \times X\right) _i = M \times X_{i}$. 
 If $X$ is compact, the open cone 
$\rc X = X \times [0,1[ \big/ X \times \{ 0 \}$ is endowed with the 
\emph{conical filtration} defined by
$\left(\rc X\right) _i =\rc X_{i-1}$,  $0\leq i\leq n+1$. 
By convention, 
$\rc \,\emptyset=\{ \tv \}$, where $\tv=[-,0]$ is the apex of the cone.

\begin{definition}\label{def:applistratifiee}
A \emph{stratified map,} $f\colon X\to Y$, is a continuous map between filtered spaces such that, for each stratum  $S\in\cS_{X}$,
there exists a unique stratum $S^f\in \cS_Y$ with $f(S)\subset S^f$ and $\codim S^f\leq \codim S$.
\end{definition}

A continuous map $f\colon X\to Y$ is  stratified  if, and only if, the pull-back of a stratum 
  $S' \in \cS_Y$ is empty or a union of strata of $X$,
$f^{-1}(S')=\sqcup_{i\in I}S_{i}$, with $\codim S'\leq \codim S_{i}$ for each $i\in I$.
Thus, a stratified map sends a regular stratum in a regular one but  the image of a singular stratum 
can be included in a regular one.

Let $X$ be a filtered space.
The  canonical injection of an open subset  $U\hookrightarrow X$,
the canonical projection,
$\pr\colon M\times X\to X$, 
 the maps
 $\iota_{m}\colon X\to M\times X$ with $x\mapsto (m,x)$, $m\in M$,
$\iota_{t}\colon X\to\rc X$ with $x\mapsto [x,t]$, $t\neq 0$,
are stratified for the filtered structures described above. 
In the following definition,
the product $X\times [0,1]$ is endowed with the product filtration.

\begin{definition}\label{def:homotopie}
Two stratified maps 
$f,\,g\colon X\to Y$
are \emph{homotopic} if there exists a stratified map,  
$\varphi\colon X\times [0,1]\to Y$,
such that  $\varphi(-,0)=f$ and $\varphi(-,1)=g$.
Homotopy is an equivalence relation and produces the notion of homotopy equivalence between filtered spaces.
\end{definition}

We present now a definition of the CS sets of  Siebenman, \cite{MR0319207}, in which the links of singular points
are not necessarily CS sets but only supposed to be non-empty filtered spaces. 
With this definition, the regular subspace $X\menos \Sigma$   is a dense open subset of $X$
and the stratification is locally finite.

\begin{definition}\label{def:csset}
A \emph{CS set} of dimension $n$ is a filtered space,
$$
\emptyset\subset X_0 \subseteq X_1 \subseteq \cdots \subseteq X_{n-2} \subseteq X_{n-1} \subsetneqq X_n =X,
$$
such that, for each $i$, 
$X_i\backslash X_{i-1}$ is a topological manifold of dimension $i$ or the empty set. 
Moreover, for each point $x \in X_i \backslash X_{i-1}$, $i\neq n$, there exist
\begin{enumerate}[(i)]
\item an open neighborhood $V$ of  $x$ in $X$, endowed with the induced filtration,
\item an open neighborhood $U\subseteq V$ of $x$ in $X_i\backslash X_{i-1}$, 
\item a compact filtered space, $L$,
of dimension $n-i-1$, where the open cone, $\rc L$, is provided with the {\em conical filtration}, $(\rc L)_{j}=\rc L_{j-1}$,
\item a homeomorphism, $\varphi \colon U \times \rc L\to V$, 
such that
\begin{enumerate}[(a)]
\item $\varphi(u,\tv)=u$, for each $u\in U$, 
\item $\varphi(U\times \rc L_{j})=V\cap X_{i+j+1}$, for each $j\in \{0,\ldots,n-i-1\}$.
\end{enumerate}
\end{enumerate}
The pair  $(V,\varphi)$ is a   \emph{conical chart} of $x$
 and the filtered space $L$ is a \emph{link} of $x$. 
The CS set $X$ is called  \emph{normal} if its links are connected.
\end{definition}

Perversity is the main ingredient in intersection homology, which allows 
a ``control of the deviation of transversality''  between simplexes and strata.
The original perversities introduced by Goresky and MacPherson in \cite{GM1} 
depend only on the codimension of the strata.
More general perversities  (\cite{MacPherson90,FriedmanBook, MR1245833, MR2210257}) 
are defined on the set of strata. 
They  are used in \cite{CST3} where we develop a topological invariance by refinement of the filtration,
in \cite{CST4}  with a  blown-up cohomology,
and in \cite{CST2,ST1} with a development of Poincar\'e duality for pseudomanifolds with a cap product.
We first review their definitions.
\begin{definition}\label{def:perversitegen}
A \emph{perversity on a filtered space, $X$,} is a map $\ov{p}\colon \cS_{X}\to \ov{\Z}={\Z}\cup \{\pm\infty\}$
taking the value 0 on the regular strata. The pair $(X,\ov{p})$
is called a \emph{perverse space}, or a \emph{perverse CS set} if $X$ is a CS set. 

A \emph{constant perversity} $\ov k$, with $k\in \ov \Z$, is defined by $\ov k(S)=k$ for any singular stratum $S$.
The \emph{top perversity}  $\ov t$ is
 defined by $\ov{t}(S)=\codim S-2$, if $S$ is a singular stratum. 
 Given a perversity $\ov p$ on $X$,  the \emph{complementary perversity} on $X$, $D\ov{p}$, is characterized 
 by $D\ov{p}+\ov{p}=\ov{t}$.

\smallskip
Any map $f\colon \N\to\N$ such that $f(0)=0$ defines a perversity $\ov{p}$ 
 by $\ov{p}(S)=f(\codim S)$. Such perversity is called \emph{codimensional}. 
In general, we denote by the same letter the perversity $\ov{p}$ and the map $f$.
The dual perversity of a codimensional perversity remains codimensional.
Among the codimensional perversities we find the original perversities of  \cite{GM1}.
\end{definition}

\begin{definition}\label{def:perversite}
A \emph{Goresky-MacPherson perversity (or GM-perversity)} is a map 
$\ov{p}\colon \{2,\,3,\dots\}\to\N$ such that  
$\ov{p}(2)=0$ and
$\ov{p}(i)\leq\ov{p}(i+1)\leq\ov{p}(i)+1$ for all $i\geq 2$. 
\end{definition}

\begin{definition}\label{def:perversiteimagereciproque}
Let $f\colon X\to Y$ be a stratified map and $\ov{q}$ be a perversity on $Y$.
 The  \emph{pull-back perversity of $\ov{q}$ by $f$} is the perversity $f^*\ov q$ on $X$ defined by
$(f^*\ov{q})(S)=\ov{q}(S^f)$,
for each $S\in \cS_{X}$.
For a canonical injection, $\iota$,  we still denote by $\ov{p}$ the perversity $\iota^*\ov{p}$ and call it the \emph{induced perversity.}
\end{definition}

The  product with a topological space, $X\times M$, is endowed with
the pull-back perversity of $\ov{p}$ by the canonical projection $X\times M\to X$, also denoted by $\ov{p}$.

Let $X$ be compact. The strata of the cone $\rc X$ are the singleton $\{\tv\}$ and the products
$S\times ]0,1[$ with $S$ a stratum of $X$. 
A perversity $\ov{q}$ on the open cone,  $\rc X$, induces a perversity on $X$, also denoted by $\ov{q}$ and defined by
$\ov{q}(S)=\ov{q}(S\times ]0,1[)$. 

%%%%%%%%%%%%%%%%%%%%%%%%%%%%%%%%
\section{Intersection homology}\label{sec:lesdeux}

Let $(X,\ov{p})$ be an $n$-dimensional perverse space.
Let us begin with some recalls from \cite{MR0350744}. 

%%%%%%%%%%%%
\subsection{Dimension of polyhedra}\label{subsec:dimpoly}
By definition, a subset $P\subset \R^n$ is a \emph{polyhedron} if each point $a\in P$ has a cone neighbourhood
$a*L\subset P$ with $L$ compact. 
A \emph{subpolyhedron} $Q$ of $P$ is a subset $Q\subset P$ which is itself a polyhedron.
Any polyhedron is a locally finite union of simplexes (\cite[Theorem 2.2]{MR0350744}), $P=\cup_{j}\Delta_{j}$,
called a triangulation of $P$.
If $Q$ is a subpolyhedron of $P$, then any triangulation of $Q$ can be extended to a triangulation of $P$.
The existence of triangulations allows a definition of dimension.

\begin{definition}\label{def:dimension}
The \emph{dimension of a polyhedron} $P$ is given by
$\dim P=\max \dim \Delta_{j}$ where the $\Delta_{j}$'s form a triangulation of $P$.
(This notion does not depend on the choice of the triangulation.)
A subspace $A\subset P$ of a polyhedron is of \emph{polyhedral dimension} less than or equal to $\ell$
 if  $A$ is included in a polyhedron $Q$ with $\dim Q\leq \ell$. 
 It is said of polyhedral dimension $k$ if $\dim A\leq k$ and $\dim A\nleq k-1$.
\end{definition}

This definition verifies
 \begin{equation}\label{equa:dimunion} 
 \dim (A_{1}\cup A_{2})= \max (\dim A_{1},\dim A_{2}).
 \end{equation}
With  this  definition, we  do   a selection among singular simplexes in the spirit of \eqref{equa:keyfirst}.

\begin{definition}\label{def:homotopygeom}
Let $(X,\ov{p})$  be a  perverse space.
A simplex $\sigma\colon \Delta\to X$ is  
\emph{$\ov{p}$-allowable}  
if, for each singular stratum $S$, the set $\sigma^{-1}S$ verifies
\begin{equation}\label{equa:admissible}
\dim \sigma^{-1}S\leq \dim\Delta-\codim S +\ov{p}(S)=\dim\Delta-2-D\ov{p}(S),
\end{equation}
with the convention $\dim\emptyset=-\infty$.
A singular  chain $\xi$ is \emph{$\ov{p}$-allowable} if it can be written as a linear combination of 
$\ov{p}$-allowable  simplexes,
and of \emph{$\ov{p}$-intersection} if $\xi$ and its boundary $\partial \xi$ are $\ov{p}$-allowable.
We denote by $C_{*}^{\ov{p}}(X;G)$ the complex of singular chains of $\ov{p}$-intersection
and by $H_*^{\ov{p}}(X;G)$ its homology, called
\emph{$\ov{p}$-intersection homology of $X$ with coefficients in a module $G$ over a Dedekind ring $R$}.
\end{definition}

\begin{remark}
We specify the notion of $\overline{p}$-allowable simplex, $\sigma\colon \Delta^k\to X$, for $k= 0, \,1,\, 2$, 
in function of the value of the perversity $D\overline{p}$ on each singular stratum.
This simplex $\sigma$ is $\overline p$-allowable if, for each singular stratum $S$,  the following conditions hold:

\begin{table}[h!]
  \centering
  \begin{tabular}{|c|c|c|c|c|}
    \hline
    \diagbox{k}{$D\overline p(S)$}    & -1 & 0  &$\geq 1$ \\  \hline
0   & $\sigma^{-1}(S) =\emptyset$ &  $\sigma^{-1}(S) =\emptyset$& $\sigma^{-1}(S) =\emptyset$\\[.1cm]     \hline
 1  &  $\sigma^{-1}(S)$ finite set&$\sigma^{-1}(S) =\emptyset$ & $\sigma^{-1}(S) =\emptyset$\\ [.1cm]      \hline
 2  & $\dim \sigma^{-1}(S)\leq 1$& $\sigma^{-1}(S)$ finite set & $\sigma^{-1}(S) =\emptyset$\\  [.1cm]     \hline
  \end{tabular}
\end{table}
For instance, if $D\ov{p}\geq 0$, which is the case of the GM-perversities,
a 0-simplex or a 1-simplex are $\ov{p}$-allowable if, and only if, $\sigma^{-1}S=\emptyset$ for any singular stratum $S$:
this means that they must lie in the regular part.
\end{remark}

\begin{example}\label{exam:R3filtre}
Consider two 3-simplexes of the ambient stratified space $X=\R^3$, with two singular strata,
$S_{0}=\{x_{0}=0\}$, $S_{1}=]0,\infty[$ and a regular stratum
$S_{2}=\R^3\menos [0,\infty[$.
\medskip
\begin{center}

\begin{tikzpicture}

\draw [color=black] (-2,0.2) node {$x_0$};
\draw [color=black] (0,-0.2) node {$x_1$};
\draw [color=black] (0,2.2) node {$x_2$};
\draw [color=black] (1,0.2) node {$x_3$};

\draw [color=black] (4,-.6) node {$x_0$};
\draw [color=black] (6,-1) node {$x_1$};
\draw [color=black] (6,1.4) node {$x_2$};
\draw [color=black] (7,-.6) node {$x_3$};

\draw [color=black] (-2,0.5) node {$\bullet$};
	
\draw [color=black] (0,0)-- (1,0.5);
\draw [color=black] (1,0.5)--  (0,2);
\draw [color=black]  (0,2)-- (0,0);

\draw [color=black]  (0,0)-- (-2,0.5);
\draw [color=black]  (0,2)-- (-2,0.5);

\draw [color=black] (6,-.8)-- (7,-0.3);
\draw [color=black] (7,-.3)--  (6,1.2);
\draw [color=black]  (6,1.2)-- (6,-.8);

\draw [color=black,]  (4,-.3)-- (7,-.3);
\draw [color=black]  (6,-.8)-- (4,-.3);
\draw [color=black]  (6,1.2)-- (4,-.3);

\draw [color=black, thick]  (-2,0.5)-- (5.4,0.5);
\draw [color=black, dotted]  (5.4,0.5)-- (6.2,0.5);
\draw [color=black, thick]  (6.2,0.5)-- (10,0.5);
\fill [color=black] (5.4,0.5)  circle (1.5pt);
\fill[color=black] (6.2,0.5) circle (1.5pt);

\end{tikzpicture}
\end{center}

For the left-one, we have
$\dim \sigma^{-1}S_{0}=0$,
$\dim \Delta-2-D\ov{p}(S_{0})=1-D\ov{p}(S_{0})$,
and
$\dim \sigma^{-1}S_{1}=1$,
$\dim \Delta-2-D\ov{p}(S_{1})=1-D\ov{p}(S_{1})$.
This is a $\ov{p}$-allowable simplex for any perversity $\ov{p}$ such that $D\ov{p}(S_{0})\leq 1$ 
and
$D\ov{p}(S_{1})\leq 0$, thus in particular for $D\ov{p} \equiv 0$. 
The previous remark implies that the 1-simplex
 $[x_{0},x_{3}]$ 
 is not $\ov{p}$-allowable if $D\ov{p}\equiv 0$.
 
 Let us continue with the right-one simplex. Here, we have $\sigma^{-1}S_{0}=\emptyset$
 and $\dim \sigma^{-1}S_{1}=1$.
 We get a $\ov{p}$-allowable simplex if $D\ov{p}\leq 0$.
We can also check that any $\ell$-face of $\Delta$, for $\ell\in\{0,1,2\}$, is a $\ov{p}$-allowable 
simplex if $D\ov{p}\equiv 0$.
\end{example}

%%%%%%%%%%%%%%%%%%%%%
\subsection{Chain maps induced by stratified maps}
We begin with the chain map induced by a stratified map and the compatibility of this association with homotopy.  
We apply it to the  canonical projection $X\times \R\to X$.

\begin{proposition}\label{prop:stratifieethomotopie}
Let $f\colon (X,\ov{p})\to (Y,\ov{q})$ be  a stratified map between two perverse spaces with  $f^*D\ov q \leq D\ov p$.
The association $\sigma\mapsto f\circ \sigma$ sends a $\ov{p}$-allowable simplex on a $\ov{q}$-allowable simplex
and defines a chain map
$f_{*}\colon C_*^{\ov p} (X;G )\to C_*^{\ov q} (Y;G )$.
\end{proposition}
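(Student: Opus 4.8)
The plan is to verify the two assertions of \propref{prop:stratifieethomotopie} in turn: first that post-composition with $f$ preserves $\ov p$-allowability (for simplices, hence for chains), and second that it is compatible with the boundary operator so that it restricts to a chain map on $\ov p$-intersection chains.

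For the first point, let $\sigma\colon \Delta\to X$ be a $\ov p$-allowable simplex and let $S'\in\cS_Y$ be a singular stratum. I want to bound $\dim (f\circ\sigma)^{-1}S'$. The key is that $f$ is stratified, so $f^{-1}(S')$ is empty or a disjoint union of strata $\sqcup_{i\in I}S_i$ of $X$ with $\codim S'\le \codim S_i$ for all $i$. Hence
\begin{equation*}
(f\circ\sigma)^{-1}S'=\sigma^{-1}(f^{-1}S')=\bigsqcup_{i\in I}\sigma^{-1}S_i,
\end{equation*}
and by \eqref{equa:dimunion} (extended to locally finite unions, which holds here since the stratification of a CS set is locally finite and $\sigma$ has compact domain, so only finitely many $S_i$ are hit) we get $\dim (f\circ\sigma)^{-1}S'=\max_i \dim\sigma^{-1}S_i$, with the convention $\dim\emptyset=-\infty$ covering the case $f^{-1}S'=\emptyset$ or $I=\emptyset$. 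Now, if some $S_i$ is a regular stratum then $\sigma^{-1}S_i\subseteq\Delta$ so its polyhedral dimension is at most $\dim\Delta$, which is $\le \dim\Delta-\codim S'+\ov q(S')$ precisely because $\ov q(S')=\codim S'-2-D\ov q(S')$ and the hypothesis gives $D\ov q(S')\le$ the relevant bound; more cleanly, write everything in the form on the right of \eqref{equa:admissible}. For a singular $S_i$, $\ov p$-allowability of $\sigma$ gives $\dim\sigma^{-1}S_i\le \dim\Delta-2-D\ov p(S_i)$. So I need $D\ov p(S_i)\ge D\ov q(S')$ for each $i$; but $S'=S_i^{\,f}$, so $D\ov q(S')=(f^*D\ov q)(S_i)\le D\ov p(S_i)$ by the hypothesis $f^*D\ov q\le D\ov p$. (If $S_i$ is regular then $D\ov p(S_i)=\ov t(S_i)=\codim S_i-2\ge \codim S'-2\ge \codim S'-2-\ov q(S')\cdot(\text{sign})$; one checks directly that $\dim\Delta\le \dim\Delta-2-D\ov q(S')$ fails in general, so regular $S_i$ must be handled by noting that then $\sigma^{-1}S_i$ is open in $\Delta$ and... actually the clean statement is: since $f^{-1}S'\subseteq X\smallsetminus\Sigma_X$ is impossible when $S'$ is singular unless... ) — the honest route is: when $S'$ is singular, every $S_i$ has $\codim S_i\ge\codim S'\ge 1$, but a regular stratum has codimension $0$, so in fact no $S_i$ can be regular, and the singular case alone suffices. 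This last observation is the small point that makes the estimate go through, and it is where I expect the only genuine subtlety to lie.

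Taking the maximum over $i$ yields $\dim(f\circ\sigma)^{-1}S'\le \dim\Delta-2-D\ov q(S')=\dim\Delta-\codim S'+\ov q(S')$, i.e. $f\circ\sigma$ is $\ov q$-allowable. Since $\ov q$-allowability of chains is defined as being a linear combination of $\ov q$-allowable simplices, linear extension of $\sigma\mapsto f\circ\sigma$ sends $\ov p$-allowable chains to $\ov q$-allowable chains.

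For the second point, it is classical that post-composition commutes with the simplicial boundary: $\partial(f\circ\sigma)=f\circ(\partial\sigma)$ on $C_*(X;G)$, since the face maps of $\sigma$ are $\sigma\circ\delta_j$ and $f\circ(\sigma\circ\delta_j)=(f\circ\sigma)\circ\delta_j$. Hence if $\xi\in C_*^{\ov p}(X;G)$, meaning both $\xi$ and $\partial\xi$ are $\ov p$-allowable, then $f\circ\xi$ is $\ov q$-allowable by the first point and $\partial(f\circ\xi)=f\circ(\partial\xi)$ is $\ov q$-allowable as well, so $f\circ\xi\in C_*^{\ov q}(Y;G)$. Thus $f_*$ is well defined, and it is a chain map because it commutes with $\partial$ on all singular chains, a fortiori on intersection chains. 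The main obstacle, as noted, is not any computation but the bookkeeping around regular versus singular strata in the pull-back decomposition $f^{-1}(S')=\sqcup_i S_i$ and confirming that the perversity inequality $f^*D\ov q\le D\ov p$ is exactly what is needed to push the allowability estimate through; everything else is routine.
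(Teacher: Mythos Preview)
Your proof is correct and follows essentially the same approach as the paper's: decompose $f^{-1}(S')$ as a union of strata $S_i$, observe that the codimension inequality $\codim S_i\ge\codim S'\ge 1$ forces each $S_i$ to be singular, use compactness of $\sigma(\Delta)$ to reduce to a finite union, and then apply the $\ov p$-allowability of $\sigma$ together with $f^*D\ov q \le D\ov p$. The exploratory detour in your middle paragraph (attempting to bound $\dim\sigma^{-1}S_i$ for regular $S_i$ before realizing none occur) could simply be excised for a cleaner write-up, but the mathematics is sound.
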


\begin{proof}
Let 
$\sigma\colon \Delta\to X$
be a $\ov{p}$-allowable  simplex.
Since $f_* (\partial\sigma) = \partial f_*(\sigma)$ it suffices to prove
that $f_*(\sigma)$ is a $\ov{q}$-allowable  simplex.
 By definition, the simplex $\sigma$ verifies 
\begin{equation}\label{equadansX}
\dim \sigma^{-1}S \leq \dim \Delta - D\ov{p}(S) -2,
\end{equation}
for each singular stratum $S \in \cS_{X}$, and we have to prove 
\begin{equation}\label{equadansY}
\dim \sigma^{-1}  f^{-1}  T \leq\dim\Delta - D\ov{q}(T) -2,
\end{equation}
for each  singular stratum  $T\in \cS_{Y}$.
Since $f$ is a stratified map, there exists a family of strata $\{S_i  \mid i\in I\} \subset \cS_X$ 
with $f^{-1}(T)=\sqcup_{i\in I}S_{i}$, with $\codim T \leq \codim S_{i}$ for each $i\in I$. 
In particular, the strata $S_i$ are singular strata. 
Since $\sigma(\Delta)$ is compact, the family $J = \{i \in I \mid  S_i \cap \sigma (\Delta) \ne \emptyset\}$ is finite.
We get
\begin{eqnarray*}
\dim \sigma^{-1} f^{-1} (T) &=& \dim \bigcup_{i\in J} \sigma^{-1}(S_i)
= \max\{ \dim \sigma^{-1}(S_i)\}  \\
&\leq_{(1)}& %
\dim \Delta-\min \{  D \ov{p}(S_{i}) \mid i \in J\} - 2
\\
&\leq_{(2)}& %
\dim  \Delta- D\ov q (T) -2,
\end{eqnarray*}
where %
(1) comes from \eqref{equadansX}
and (2) is a consequence of $f(S_i)\subset T$ and $ f^*D\ov q \leq D\ov p$.
\end{proof}

\begin{proposition}\label{prop:homotopesethomologie}
Let $\varphi\colon (X\times [0,1],\ov{p})\to (Y,\ov{q})$ be 
a homotopy between two stratified maps 
$f,\,g\colon (X,\ov{p})\to (Y,\ov{q})$ with 
$\varphi^*D\ov q \leq D\ov p$. Then 
 $f$ and $g$
induce the same map in homology,
$f_{*}=g_{*}\colon H_*^{\ov p} (X;G )\to H_*^{\ov q} (Y;G )$.
\end{proposition}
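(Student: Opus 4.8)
The plan is to reduce the statement to the functoriality established in \propref{prop:stratifieethomotopie} together with a chain-homotopy built from the standard prism operator, checked to respect $\ov{p}$-allowability. First I would factor the homotopy $\varphi$ through the two canonical inclusions $\iota_{0},\iota_{1}\colon X\to X\times[0,1]$, $\iota_{t}(x)=(x,t)$, so that $f=\varphi\circ\iota_{0}$ and $g=\varphi\circ\iota_{1}$. Each $\iota_{t}$ is stratified for the product filtration, and because the product perversity on $X\times[0,1]$ is by definition the pull-back of $\ov{p}$ along the projection, one has $\iota_{t}^{*}(D\ov p)=D\ov p$; hence \propref{prop:stratifieethomotopie} applies and gives chain maps $(\iota_{t})_{*}\colon C_{*}^{\ov p}(X;G)\to C_{*}^{\ov p}(X\times[0,1];G)$. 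Since $\varphi^{*}D\ov q\le D\ov p$ by hypothesis, $\varphi$ also induces $\varphi_{*}\colon C_{*}^{\ov p}(X\times[0,1];G)\to C_{*}^{\ov q}(Y;G)$, and $f_{*}=\varphi_{*}\circ(\iota_{0})_{*}$, $g_{*}=\varphi_{*}\circ(\iota_{1})_{*}$ at the chain level. So it suffices to show that $(\iota_{0})_{*}$ and $(\iota_{1})_{*}$ are chain homotopic through a homotopy taking values in $C_{*}^{\ov p}(X\times[0,1];G)$.

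For that I would invoke the classical prism operator $P\colon C_{*}(X)\to C_{*+1}(X\times[0,1])$, which for a singular simplex $\tau\colon\Delta^{\ell}\to X$ writes $P(\tau)$ as a signed sum of the $(\ell+1)$-simplexes obtained by composing the affine triangulation of $\Delta^{\ell}\times[0,1]$ into $(\ell+1)$-simplexes with $\tau\times\id_{[0,1]}$, and satisfies $\partial P+P\partial=(\iota_{1})_{*}-(\iota_{0})_{*}$. The one thing requiring proof is that $P$ preserves $\ov{p}$-allowability. Let $\tau$ be $\ov{p}$-allowable and let $\rho$ be one of the $(\ell+1)$-simplexes appearing in $P(\tau)$; it factors as $\Delta^{\ell+1}\xrightarrow{\pi}\Delta^{\ell}\times[0,1]\xrightarrow{\tau\times\id}X\times[0,1]$, where $\pi$ is the affine surjection onto a top face of the prism decomposition. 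For a singular stratum $S\times[0,1]$ of $X\times[0,1]$ (these are exactly the singular strata, the apex-type strata not occurring since $[0,1]$ is unstratified), one has $\rho^{-1}(S\times[0,1])=\pi^{-1}\big((\tau^{-1}S)\times[0,1]\big)$. Now $(\tau^{-1}S)\times[0,1]$ is contained in a polyhedron $Q\times[0,1]$ with $\dim(Q\times[0,1])=\dim Q+1\le(\dim\Delta^{\ell}-\codim S+\ov p(S))+1$, and $\pi^{-1}(Q\times[0,1])$ is a subpolyhedron of $\Delta^{\ell+1}$ of the same dimension because $\pi$ is a piecewise-linear homeomorphism onto a top-dimensional cell; using $\codim(S\times[0,1])=\codim S$ and $\ov p(S\times[0,1])=\ov p(S)$ one gets exactly $\dim\rho^{-1}(S\times[0,1])\le\dim\Delta^{\ell+1}-\codim(S\times[0,1])+\ov p(S\times[0,1])$, i.e. $\rho$ is $\ov{p}$-allowable. (When $\tau^{-1}S=\emptyset$ the preimage is empty and the inequality holds trivially.) Therefore $P$ restricts to $P\colon C_{*}^{\ov p}(X\times[0,1];G)\hookleftarrow$... more precisely $P\colon C_{*}^{\ov p}(X;G)\to C_{*+1}^{\ov p}(X\times[0,1];G)$; but one must be slightly careful here, because $\ov{p}$-\emph{intersection} chains require $\partial\xi$ allowable too. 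That is automatic: if $\xi\in C_{*}^{\ov p}(X;G)$ then $\partial\xi$ is allowable, so $P(\xi)$ and $P(\partial\xi)$ are allowable, and from $\partial P(\xi)=(\iota_{1})_{*}\xi-(\iota_{0})_{*}\xi-P(\partial\xi)$ — a combination of allowable chains — we see $\partial P(\xi)$ is allowable, so $P(\xi)\in C_{*+1}^{\ov p}(X\times[0,1];G)$.

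Combining, $H:=\varphi_{*}\circ P\colon C_{*}^{\ov p}(X;G)\to C_{*+1}^{\ov q}(Y;G)$ satisfies $\partial H+H\partial=\varphi_{*}\big((\iota_{1})_{*}-(\iota_{0})_{*}\big)=g_{*}-f_{*}$, whence $f_{*}=g_{*}$ on $H_{*}^{\ov p}(X;G)\to H_{*}^{\ov q}(Y;G)$. The only genuinely non-formal point is the allowability of the prism operator, i.e. controlling the polyhedral dimension of $\pi^{-1}\big((\tau^{-1}S)\times[0,1]\big)$; here the essential facts are that a product of a polyhedron with $[0,1]$ raises dimension by exactly one and that the affine maps $\pi$ of the prism subdivision are piecewise-linear and carry subpolyhedra to subpolyhedra without changing dimension (\defref{def:dimension} and \eqref{equa:dimunion}), so no delicate general-position argument is needed at this stage — unlike the subdivision issue flagged in the introduction, the prism subdivision is affine and fixed once and for all.
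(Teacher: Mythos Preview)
Your proof is correct and follows essentially the same approach as the paper: reduce to showing $(\iota_0)_*=(\iota_1)_*$ via \propref{prop:stratifieethomotopie}, then verify that the standard prism operator preserves $\ov{p}$-allowability by the ``dimension goes up by one'' count for $\sigma^{-1}S\times[0,1]$. You are in fact slightly more explicit than the paper on two points---that each affine prism simplex is a PL homeomorphism onto its image (so polyhedral dimension is preserved under $\pi^{-1}$), and that the allowability of $\partial P(\xi)$ follows formally from the prism identity---both of which the paper leaves implicit.
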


\begin{proof} 
From \propref{prop:stratifieethomotopie}, we get the homomorphism
$\varphi_{*}\colon H_*^{\ov p} (X\times [0,1] )\to H_*^{\ov q} (Y )$.
The canonical injections,
 $\iota_{0},\,\iota_{1}\colon X\to X\times [0,1]$,
  defined by $\iota_{k}(x)=(x,k)$ for $k=0,\,1$,  are stratified maps and induce homomorphisms
  $\iota_{0,*},\,\iota_{1,*}\colon  H_*^{\ov p} (X )\to H_*^{\ov p} (X\times [0,1] )$.
  Since 
$f=\varphi\circ\iota_{0}$ and $g=\varphi\circ\iota_{1}$, it suffices to prove 
$\iota_{0,*}=\iota_{1,*}$. 

\smallskip
Let $\sigma\colon \Delta=[e_{0},\dots,e_{m}]\to X$ be a simplex.
The vertices of the product 
$\Delta \times [0,1]$ are $a_{j}=(e_{j},0)$ and $b_{j}=(e_{j},1)$. We define an $(m +1)$-chain on   $\Delta \times [0,1]$ by
$P=%
\sum_{j=0}^m (-1)^j [ a_{0},\dots,a_{j},b_{j},\dots,b_{m}].$
This gives a chain homotopy, 
$h\colon
C_{*}(X)\to C_{*+1}(X\times [0,1])$,
between $\iota_{0,*}$ and $\iota_{1,*}$, defined by
$\sigma\mapsto (\sigma\times\id)_{*}(P)$. % 

\medskip
Now, it remains to prove that the image $h(\sigma)$ of a $\ov{p}$-allowable simplex is a
$\ov{q}$-allowable chain.
To any $j\in\{0,\dots,m\}$, we associate the simplex
$\tau_{j}\colon\nabla=[ v_{0},\dots,v_{m+1} ]\to \Delta^m\times [0,1]$,
defined by
$(v_{0},\dots,v_{m+1})\mapsto (a_{0},\dots,a_{j},b_{j},\dots,b_{m})$. 
By construction we have
$\Delta^m\times [0,1]=\cup_{j=0}^m\tau_{j}(\nabla)$.
For each stratum $S \in \cS_{X}$, we have
$
\dim \tau_{j}^{-1}(\sigma\times\id)^{-1}(S\times [0,1]) \leq 
\dim \sigma^{-1}S +1
\leq 
m- D \ov{p}(S)-1$.
We get
  \begin{eqnarray*}
\dim h(\sigma)^{-1} S
  &\leq &
  \max \left\{ \dim \tau_{j}^{-1}(\sigma\times\id)^{-1}(S\times [0,1])\mid  j \in \{0, \dots,m\}\right\}\\
  &\leq&  m- D \ov{p}(S)-1
  %
  %&= &
 =\dim\nabla- D \ov{p}(S \times [0,1]) -2,
   \end{eqnarray*}
   which implies the desired inclusion 
   $h(C_{*}^{\ov{p}}(X))\subset C_{*}^{\ov{p}}(X\times [0,1])$.
 \end{proof}

\begin{corollary}\label{cor:RfoisX}
Let $(X,\ov{p})$ be a perverse space. 
The inclusions $\iota_{z}\colon X\hookrightarrow \R\times X$, $x\mapsto (z,x)$,  with $z\in\R$, 
and the projection  $\pr \colon \R\times X\to X$ induce   isomorphisms
$H^{\ov{p}}_{k}(X;G)\cong H^{\ov{p}}_{k}(\R\times X;G)$.
\end{corollary}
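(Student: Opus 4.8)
The plan is to deduce $\corref{cor:RfoisX}$ from the two preceding propositions in the standard way, treating $\iota_z$ and $\pr$ as mutually inverse up to the homotopy relation provided by $\propref{prop:homotopesethomologie}$. First I would record that $\pr\colon \R\times X\to X$ and $\iota_z\colon X\to \R\times X$ are stratified maps for the product filtration and that they are compatible with the perversities in the sense required by $\propref{prop:stratifieethomotopie}$: the product carries the pull-back perversity $\pr^*\ov p$, so $\pr^*D(\pr^*\ov p)=\pr^*D\ov p \leq \pr^*D\ov p$ trivially, and likewise $\iota_z^*D\ov p = D\ov p$ since $\iota_z$ sends each stratum $S$ of $X$ to the stratum $\{z\}\times S$ of $\R\times X$, on which $\pr^*\ov p$ takes the value $\ov p(S)$. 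Hence by $\propref{prop:stratifieethomotopie}$ both maps induce chain maps on the $\ov p$-intersection complexes, and therefore homomorphisms $\pr_*$ and $\iota_{z,*}$ in $\ov p$-intersection homology.

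Next, $\pr\circ\iota_z=\id_X$ on the nose, so $\pr_*\circ\iota_{z,*}=\id$ on $H^{\ov p}_*(X;G)$. For the other composite, $\iota_z\circ\pr\colon \R\times X\to\R\times X$ sends $(t,x)$ to $(z,x)$, and I would exhibit an explicit stratified homotopy $\varphi\colon (\R\times X)\times[0,1]\to \R\times X$, $\varphi((t,x),s)=((1-s)t+sz,\,x)$, from $\id_{\R\times X}$ to $\iota_z\circ\pr$. This $\varphi$ is continuous and stratified — its effect on the $X$-coordinate is the identity and on the $\R$-coordinate it is just an affine interpolation, so the pull-back of any stratum of $\R\times X$ is a union of strata of $(\R\times X)\times[0,1]$ of the correct codimension — and the perversity hypothesis $\varphi^*D(\pr^*\ov p)\leq D(\pr^*\ov p)$ holds because $\varphi$ respects the $X$-coordinate. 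Then $\propref{prop:homotopesethomologie}$ gives $(\iota_z\circ\pr)_*=\id$ in homology, i.e. $\iota_{z,*}\circ\pr_*=\id$ on $H^{\ov p}_*(\R\times X;G)$. Combining the two identities shows that $\iota_{z,*}$ and $\pr_*$ are mutually inverse isomorphisms, which is the claim.

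The one point requiring genuine care — the ``main obstacle'', such as it is — is the verification that the interpolation homotopy $\varphi$ is a \emph{stratified} map in the sense of $\defref{def:applistratifiee}$ and that it meets the perversity inequality of $\propref{prop:homotopesethomologie}$. One must check that for a stratum $S'$ of $\R\times X$, necessarily of the form $\R\times S$ with $S\in\cS_X$, the set $\varphi^{-1}(\R\times S)$ equals $(\R\times S)\times[0,1]$, a union of strata of the product with filtered space $(\R\times X)\times[0,1]$, all of the same codimension as $\R\times S$; since the perversity on $(\R\times X)\times[0,1]$ is again the pull-back of $\ov p$ along the projection to $X$ and $\varphi$ commutes with that projection, the inequality $\varphi^*D(\pr^*\ov p)\le D(\pr^*\ov p)$ is in fact an equality. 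Everything else is formal bookkeeping with the two propositions already proved.
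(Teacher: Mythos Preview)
Your argument is correct and is exactly the intended one: the paper states \corref{cor:RfoisX} without proof, immediately after \propref{prop:homotopesethomologie}, as a direct consequence of the latter applied to the straight-line homotopy between $\id_{\R\times X}$ and $\iota_z\circ\pr$, together with $\pr\circ\iota_z=\id_X$. The stratified and perversity checks you spell out are precisely the bookkeeping implicit in the paper's remark (after \defref{def:applistratifiee}) that $\pr$ and the $\iota_m$ are stratified for the product filtration.
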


%%%%%%%%%%%%%%%%%
\subsection{The case of a cone}

The space $X$ is identified with $X\times \{1/2\}\subset \rc X.$

\begin{proposition}\label{prop:conenewdim}
Let $X$ be a compact filtered space and $\ov{p}$ be a perversity on the cone $\rc X$ endowed with the cone filtration.
If we  also denote  by $\ov{p}$ the perversity induced on $X$, we have:
 $$H_{k}^{\ov{p}}(\rc X;G)=
 \left\{
 \begin{array}{cl}
  H_{k}^{\ov{p}}(X;G)&\text{ if } k\leq D\ov{p}(\tv),\\
 0&\text{if } 0\neq k> D\ov{p}(\tv),\\
 R&\text{if } 0= k> D\ov{p}(\tv),
 \end{array}\right.
 $$
  where $\tv$ is the apex of the cone $\rc X$.
\end{proposition}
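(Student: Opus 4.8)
The strategy is the classical cone computation in intersection homology, adapted to the polyhedral dimension. I would first record the two preliminary facts: (1) the inclusion $X=X\times\{1/2\}\hookrightarrow \rc X$ is a stratified map, so by \propref{prop:stratifieethomotopie} it induces a chain map $C_*^{\ov p}(X;G)\to C_*^{\ov p}(\rc X;G)$; (2) the "truncation" estimate: a singular simplex $\sigma\colon\Delta\to\rc X$ whose image meets the apex has its pull-back of the apex stratum $\{\tv\}$ to control, while for a generic stratum $S\times\,]0,1[$ of $\rc X$ one has $\codim(S\times\,]0,1[)=\codim S$ in $X$ (computed in $\rc X$) — this bookkeeping is what lets the cone formula depend on $D\ov p(\tv)$ only through the apex. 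The heart of the argument is a cone construction: given a $\ov p$-allowable chain $\xi$ in $\rc X$, form the cone $\tc\xi$ on the apex $\tv$, sending $\sigma\colon\Delta\to\rc X$ to $\tc\sigma\colon \tc\Delta=\Delta*\{\tv\}\to\rc X$ by the obvious conewise/linear formula $[x,t]\mapsto [\,\sigma(\cdot),\cdot\,]$. One checks $\partial(\tc\xi)=\xi-\tc(\partial\xi)$ (up to the usual sign and a degree-$0$ correction).

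**The allowability of the cone.** The main point to verify is that $\tc$ preserves $\ov p$-allowability in the relevant range. For a stratum $S\times\,]0,1[$, the preimage $(\tc\sigma)^{-1}(S\times\,]0,1[)$ is the open cone on $\sigma^{-1}(S\times\,]0,1[)$ minus the apex, so its polyhedral dimension is $\dim\sigma^{-1}(S\times\,]0,1[)+1$; since $\dim\tc\Delta=\dim\Delta+1$ and $\codim(S\times\,]0,1[)$ is unchanged, the allowability inequality \eqref{equa:admissible} is preserved verbatim. For the apex stratum $\{\tv\}$, $(\tc\sigma)^{-1}\{\tv\}$ is the single apex of $\tc\Delta$, a point, so it has polyhedral dimension $0$, and allowability reads $0\leq \dim\tc\Delta-2-D\ov p(\tv)$, i.e. it holds precisely when $\dim\Delta\geq D\ov p(\tv)+1$; the degenerate case where $\sigma^{-1}\{\tv\}$ is empty on $\sigma$ but the cone point still needs handling is where one uses that $\xi$ is a cycle/boundary to avoid low-dimensional obstructions. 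This is exactly the mechanism producing the truncation at degree $D\ov p(\tv)$.

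**Assembling the three cases.** With the cone operator $\tc$ in hand: for $k>D\ov p(\tv)$ (and $k>0$), any $\ov p$-intersection $k$-cycle $\xi$ in $\rc X$ satisfies $\partial(\tc\xi)=\xi$ with $\tc\xi$ a $\ov p$-intersection chain (the allowability check above goes through since $k+1\geq D\ov p(\tv)+2$, hence also $\dim\Delta=k\geq D\ov p(\tv)+1$), so $H_k^{\ov p}(\rc X;G)=0$. For $k=0>D\ov p(\tv)$ one gets the extra $R$ because the cone of a $0$-cycle carries the degree-$0$ correction term; $\rc X$ is path connected (it deformation retracts onto the apex as a space), so $H_0^{\ov p}(\rc X;G)\cong R$ as soon as the regular part is nonempty, which it is. For $k\leq D\ov p(\tv)$: here I would use the homotopy $\varphi\colon \rc X\times[0,1]\to \rc X$ scaling the cone coordinate, $([x,t],s)\mapsto [x,st+ (1-s)\cdot\tfrac12]$ — but this is *not* stratified, so instead one argues via a Mayer–Vietoris-free direct chain argument: show the inclusion $X\hookrightarrow\rc X$ induces an isomorphism in degrees $\leq D\ov p(\tv)$ by constructing, from the cone operator, an explicit chain homotopy between the identity on $C_{\leq D\ov p(\tv)}^{\ov p}(\rc X)$ and the composite $C_*^{\ov p}(\rc X)\to C_*^{\ov p}(X)\to C_*^{\ov p}(\rc X)$, the subtlety being that the operators only land in $\ov p$-intersection chains once degrees are large enough, which forces the cutoff. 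In practice one combines the cone operator with a subdivision pushing simplices away from the apex; the image $X\times\{1/2\}$ absorbs everything in low degrees.

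**Expected main obstacle.** The delicate step is the low-degree case $k\leq D\ov p(\tv)$ and, within it, making the cone/retraction operators respect the polyhedral notion of dimension — the same phenomenon flagged in the introduction (barycentres moving into the interior). A simplex $\sigma$ can already have $\sigma^{-1}(S\times\,]0,1[)$ touching the interior of $\Delta$, and after coning one must still control the polyhedral dimension of the preimages of all strata simultaneously; a naive simplicial cone does this automatically (the cone on a polyhedron is a polyhedron of one higher dimension), so the real work is bookkeeping the apex stratum and handling the boundary term $\tc(\partial\xi)$ so that $\tc$ descends to $\ov p$-intersection chains rather than merely $\ov p$-allowable ones — i.e. controlling allowability of $\partial(\tc\xi)$, which is where the "bad face" analysis promised later in the paper would, if needed, come in, though for the cone the direct computation should suffice.
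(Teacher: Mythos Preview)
Your high–degree argument via the cone operator $\tc$ is essentially the paper's approach, but the allowability check at the apex contains an error. You claim $(\tc\sigma)^{-1}\{\tv\}$ is the single apex $\tw$ of $\tc\Delta$. This is false in general: since $\tc\sigma(sx+(1-s)\tw)=s\cdot\sigma(x)$ and $s\cdot[y,t]=[y,st]=\tv$ whenever $s=0$ \emph{or} $\sigma(x)=\tv$, one has $(\tc\sigma)^{-1}(\tv)=\tw*\sigma^{-1}(\tv)$. You therefore need two cases. If $\sigma^{-1}(\tv)\neq\emptyset$, then $\dim(\tc\sigma)^{-1}(\tv)\leq 1+\dim\sigma^{-1}(\tv)\leq 1+\ell-D\ov p(\tv)-2$, which is exactly the allowability bound for $\tc\sigma$. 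If $\sigma^{-1}(\tv)=\emptyset$, the preimage is just $\{\tw\}$ and one needs $0\leq(\ell+1)-D\ov p(\tv)-2$, i.e.\ $\ell\geq D\ov p(\tv)+1$. So your threshold conclusion survives, but your sentence about ``the degenerate case where $\sigma^{-1}\{\tv\}$ is empty'' has the logic inverted: that \emph{is} the case you computed, and the generic case is the one you omitted.

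More seriously, your low–degree case $k\leq D\ov p(\tv)$ is both overcomplicated and incomplete. You flag it as the main obstacle and propose a vague chain homotopy combining the cone operator with a subdivision ``pushing simplices away from the apex''. None of this is needed, and none of it is spelled out enough to constitute a proof. The paper's observation is much simpler: if $\sigma\colon\Delta^\ell\to\rc X$ is $\ov p$-allowable with $\ell\leq D\ov p(\tv)+1$, then the apex condition reads $\dim\sigma^{-1}\tv\leq \ell-D\ov p(\tv)-2\leq -1$, which forces $\sigma^{-1}\tv=\emptyset$. Hence every such simplex already lands in $\rc X\menos\{\tv\}=X\times\,]0,1[$, and one has an \emph{equality} of chain complexes $C^{\ov p}_{\leq D\ov p(\tv)+1}(\rc X)=C^{\ov p}_{\leq D\ov p(\tv)+1}(X\times\,]0,1[)$. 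Now \corref{cor:RfoisX} finishes. No homotopy, no subdivision, no bad–face analysis is required here; the ``expected main obstacle'' you anticipate does not arise.
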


\begin{proof} 
The allowability condition for the stratum $\{\tv\}$ of a simplex 
$\sigma\colon \Delta^\ell \to \rc X$ is:
\begin{equation}\label{equa:encoreadmis}
\dim \sigma^{-1}\tv\leq \ell-D\ov{p}(\tv)-2.
\end{equation}

1) Thus, for any $\ell\leq D\ov{p}(\tv)+1$, we have  $\sigma^{-1}\tv=\emptyset$
and there is an isomorphism 
$C^{\ov{p}}_{\leq D \ov{p}(\tv) +1}(\rc X)\cong C^{\ov{p}}_{\leq D\ov{p}(\tv)+1 }(X\times ]0,1[)$.
This gives the first part of the statement (cf. \corref{cor:RfoisX}).

\medskip
2) For proving  the rest of the statement, we show that the inclusion $C_{*}^{\ov{p}}(\rc X)\hookrightarrow C_{*}(\rc X)$
induces an isomorphism in homology in degrees $\ell \geq D\ov{p}(\tv)+1$.  
Thus, let $ \sigma\colon \Delta^\ell\to \rc X$ be a $\ov{p}$-allowable simplex. If $[x,t]\in\rc X$, we set $s\cdot [x,t]=[x,st]$ for $s\in [0,1]$. This gives a map
$ \tc\sigma\colon \Delta^{\ell+1} = \{\tw\}*\Delta^\ell\to \rc X$,
$\tc\sigma(sx+(1-s)\tw)=s\cdot \sigma(x)$.
We first show that $\tc \sigma$ is $\ov{p}$-allowable. For this, we distinguish between the two types of strata.

a) Let $S\times ]0,1[$ with $S$ a stratum of $X$. Suppose $(\tc\sigma)^{-1}(S\times ]0,1[)\ne\emptyset$.
From $ (\tc\sigma)^{-1}(S\times ]0,1[)=\sigma^{-1}(S \times ]0,1[) \times ]0,1[$, we deduce
$\sigma^{-1}(S\times ]0,1[)\ne\emptyset$ and
$$
\dim (\tc\sigma)^{-1}(S\times ]0,1[)
=
1+ \dim \sigma^{-1}(S\times ]0,1[) \leq
1+\ell - D \ov{p}(S\times ]0,1[) - 2 .
$$

b)  We have $(\tc \sigma)^{-1}(\tv)=\tw*\sigma^{-1}(\tv)$.   There are two situations.\\
-- If $\sigma^{-1}(\tv)\ne\emptyset$, then
$\dim (\tc\sigma)^{-1}(\tv) \leq  1+\dim \sigma^{-1}(\tv)\leq   1+\ell- D\ov{p}(\tv) - 2$.\\
-- If $\sigma^{-1}(\tv)=\emptyset$, the $\ov{p}$-allowability condition of $\tc \sigma$ is
$$
\dim (\tc\sigma)^{-1}(\tv)
=
  \dim\tw
 = 0 \leq \ell -D\ov{p}(\tv) - 1 =
\ell+1-D\ov{p}(\tv) - 2 .
$$
The isomorphism in homology now derives from
$$\partial\,c\sigma=\left\{
\begin{array}{cl}
\sigma-c\partial\sigma 
&
\text{if}\; \ell\neq 0,\\
\sigma-\tv
&
\text{if}\; \ell= 0.
\end{array}\right.
$$
\end{proof}

%%%%%%%%%%%%%%%%%%%%%%%%%%%%%
\section{Strong general position}\label{sec:strong}
If $\sigma\colon \Delta\to X$ is a singular simplex, the subset $\sigma^{-1}S$ can be very wild.
We introduce a simplicial notion that can replace it.

\begin{definition}\label{def:envl}
Let $\sigma \colon \Delta \to X$ be a singular simplex on a filtered space $X$ and $S \in \cS_X$ be a stratum.
A \emph{simplicial envelope} of $\sigma^{-1}S$ is a finite family $\cT_{\sigma,S} $ of Euclidean simplexes
 such that
$$
\sigma^{-1}S \subset \bigcup_{T \in \cT_{\sigma,S}}T
$$  
\emph{and} $\dim \sigma^{-1}S = \max  \left\{ \dim T \mid  T \in \cT_{\sigma,S} \right\}$. 
If $\sigma^{-1}S = \emptyset$ we set $\cT_{\sigma,S} = \emptyset$.
A \emph{simplicial system for $\sigma$} is a  family 
$\cT_\sigma = \left\{ \cT_{\sigma, S} \mid S \in \cS_X\right\}$
 where each $\cT_{\sigma, S}$ is a simplicial envelope of $\sigma^{-1}S$.
 A \emph{simplicial system for $X$} is a family $\cT=\left\{\cT_{\sigma}\mid \sigma\in\sing\, X\right\}$
 of simplicial systems for the simplexes of $X$. 
\end{definition}
 
Simplicial envelopes and systems  always exist.
Given a simplex $\sigma\colon \Delta\to X$, a stratum $S$ and a simplicial system $\cT_{\sigma, S}$,
the allowability condition \eqref{equa:admissible} is equivalent to the following inequalities, 
for each $T \in \cT_{\sigma, S}$,
\begin{equation}\label{equa:admissible2}
\dim T\leq  \dim \Delta -2-D\ov{p}(S).
\end{equation}
%

 %%%%%%%%%%%
 \subsection{General position}\label{subsec:general}
 Detecting if a simplex included in an allowable simplex is still allowable is a crucial point 
 and  moving to general position will be a useful tool.
Let us recall some basic facts, sending to \cite{MR0350744} for more information.

\medskip
It is well known that two  affine spaces $E,F \subset \R^m$, with $E\cap F\neq\emptyset$, are in general position if 
 $\dim(E\cap F) = \dim E + \dim F - m$. 
 General position is generic in the sense that it is always possible to slightly move two affine spaces in a general position.
 This is also a stable situation in the sense that if  two affine spaces  are in general position and we slightly move one of them, then they remain in a general position. 
 
 \medskip
 The situation is different for polyhedra. Recall that two  polyhedra $\{P,Q\}$ included in a simplex $\Delta$, are in general position if 
\begin{equation}\label{equa:generalposition}
\dim (P\cap Q) \leq \dim P + \dim Q - \dim \Delta.
\end{equation}
Given two polyhedra it is always possible to slightly move one of them in order to get the general position
 (cf. \cite{MR0350744}) and thus general position of polyhedra is generic. 
 But this notion is not stable as shows the following example.
 
 \begin{example}\label{exam:passtable}
 Let $\dim\Delta=2$ and $a,b,c \in \Delta$ be three distinct  points aligned. Then, the 1-simplexes $P = [a,b]$ and $Q = [b,c]$ are in general position.
 But the simplexes $\{P,Q_\eps = [\eps a + (1-\eps)b,c]\}$ are not in general position, for any $\eps\in]0,1]$. 
 There is no stability.
  \end{example}

With this lack of stability, general position is not the right situation for our purpose. 
Thus, we  introduce in \subsecref{subsec:STP} a more robust notion called \emph{strong general position.}
Before that, we show that the two situations of general position for affine spaces 
and polyhedra coincide with an extra hypothesis.

\begin{lemma}\label{lem:cleim}
Let   $\{P,Q\}$ be two simplexes living in a simplex 
$\Delta$ and such that $\inte P \cap \inte Q \ne \emptyset$.
If $\R^m$, $E$, $F$, $H$ denote  the affine spaces generated by  $\Delta$, $P$, $ Q $ and $P \cap Q$  respectively,
we have $H=E\cap F$. Moreover, the following statements are equivalent.
\begin{enumerate}[i)]
\item The affine spaces $E,\,F$ are in general position in $\R^m$.
\item The simplexes $P,\,Q$ are in general position in $\Delta$.
\item $\dim (P \cap Q)  =  \dim P +\dim Q - \dim \Delta$.
\end{enumerate}
\end{lemma}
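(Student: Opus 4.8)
The statement has two parts: first the equality $H = E \cap F$, and then the equivalence of the three conditions. I would dispose of $H = E\cap F$ first, since it is the geometric heart of the lemma and the equivalences follow from it by elementary dimension counting. The inclusion $H \subseteq E \cap F$ is immediate: $P \cap Q \subseteq P \subseteq E$ and $P \cap Q \subseteq Q \subseteq F$, so the affine span $H$ of $P \cap Q$ lies in the affine subspace $E \cap F$. The reverse inclusion is where the hypothesis $\inte P \cap \inte Q \neq \emptyset$ is used. Pick a point $x_0 \in \inte P \cap \inte Q$. I claim that for any $y \in E \cap F$, a short segment from $x_0$ towards $y$ stays inside both $P$ and $Q$, hence inside $P \cap Q$, and therefore $y \in H$. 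Indeed, $x_0$ is interior to $P$ relative to $E$, so a ball around $x_0$ in $E$ lies in $P$; since $y \in E$, the point $x_0 + t(y - x_0)$ lies in $P$ for small $t > 0$. Symmetrically it lies in $Q$ for small $t>0$. So for $t$ small enough it lies in $P \cap Q \subseteq H$. Since $H$ is affine and contains $x_0$, it contains the whole line through $x_0$ and this point, hence contains $y$. Thus $E \cap F \subseteq H$, giving equality.

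\textbf{The equivalences.} With $H = E \cap F$ established and $\inte P \cap \inte Q \neq \emptyset$ guaranteeing $E \cap F \neq \emptyset$, the equivalences are a bookkeeping exercise. Note that a simplex and its affine span have the same dimension: $\dim P = \dim E$, $\dim Q = \dim F$, $\dim \Delta = \dim \R^m = m$ (here I use that $\Delta$ spans $\R^m$, or rather I would phrase $m$ as $\dim \Delta$ throughout to match the statement), and crucially $\dim(P \cap Q) = \dim H = \dim(E \cap F)$ — this last equality is exactly where $H = E \cap F$ is needed, because in general $\dim(P\cap Q)$ could be strictly less than $\dim(E \cap F)$ when the interiors are disjoint (as in \exemref{exam:passtable}). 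Now: (i) says $\dim(E \cap F) = \dim E + \dim F - m$, which translates verbatim to (iii) $\dim(P\cap Q) = \dim P + \dim Q - \dim \Delta$. And (ii), the general position inequality \eqref{equa:generalposition}, reads $\dim(P \cap Q) \le \dim P + \dim Q - \dim \Delta$; but for \emph{any} two affine subspaces meeting in a nonempty set one always has $\dim(E \cap F) \ge \dim E + \dim F - m$ (standard linear algebra: the join has dimension at most $m$), so combined with $H = E \cap F$ the inequality (ii) forces equality, i.e. (ii) $\Leftrightarrow$ (iii). This closes the cycle (i) $\Leftrightarrow$ (iii) $\Leftrightarrow$ (ii).

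\textbf{Main obstacle.} The only real content is the reverse inclusion $E \cap F \subseteq H$, and the subtlety there is making the "short segment stays inside" argument rigorous: one must be careful that $x_0$ being in the \emph{topological} interior of $P$ means interior relative to the affine span $E$ (a simplex has empty interior in the ambient $\R^m$ when it is not top-dimensional), and similarly for $Q$. So the clean way to run it is to fix $x_0 \in \inte P \cap \inte Q$, take $\eps > 0$ with $B_E(x_0,\eps) \subseteq P$ and $B_F(x_0,\eps) \subseteq Q$, and for $y \in E \cap F$ observe $x_0 + t(y-x_0) \in B_E(x_0,\eps) \cap B_F(x_0,\eps) \subseteq P \cap Q$ for $t = \eps/(2\|y - x_0\| + 1)$, say; then $y$ lies on the affine line through the two distinct points $x_0$ and $x_0 + t(y-x_0)$ of $P \cap Q$, hence in $H$. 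Everything else is routine dimension arithmetic, and I would present it compactly.
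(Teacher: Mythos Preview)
Your proof is correct and follows essentially the same approach as the paper: both fix a point $x_0 \in \inte P \cap \inte Q$, observe $H \subset E \cap F$ trivially, and for the reverse inclusion use that a short segment from $x_0$ toward any $z \in E \cap F$ stays inside $P$ and $Q$ (you phrase this via relative balls, the paper via the open segment $]x_0,z[$), then conclude $z \in H$ by affinity. Your treatment of the equivalences is more explicit than the paper's---which simply invokes the equality of dimensions between a polyhedron and its affine span---but the substance is identical.
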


\begin{proof}
We fix a point  $x_{0}\in \inte P \cap \inte Q$. 
 We clearly have  $H\subset E \cap F$. If $E\cap F =\{x_{0}\}$ then we get 
$H= E \cap F$. So, we can suppose that $E\cap F$ contains some $z\ne x_{0}$.
Since $E$ is the affine space generated by the simplex $P$
and  $x_{0}\in\inte P$, there exists 
 $x\in \inte P\,\cap\,  ]x_{0},z[$.
 Similarly, there exists
$y\in  \inte Q \,\cap\, ]x_{0},z[$ and we may suppose $x=y$.
Then,   $x\in\inte P \cap \inte Q \subset H$ which  implies $z \in H$. We have proven 
$E\cap F\subset H$ and the first assertion.
The second assertion is a consequence of the equality between the dimensions of a polyhedron and of its associated affine space.
\end{proof}

Finally, we link the notions of general position and admissibility. 

\begin{proposition}\label{prop:PosGen}
Let  $(X,\ov{p})$ be a perverse  space
and $\sigma \colon \Delta \to X$ be a $\ov{p}$-allowable simplex.
We fix a simplicial system $\cT_\sigma$ and a simplex $\nabla \subset \Delta$.
If, for any singular stratum $S$ of $X$ and any $T\in \cT_{\sigma,S}$, the simplexes
$\{T, \nabla\} $ are in general position in $\Delta$, then the restriction $\sigma_\nabla \colon \nabla \to X$ of $\sigma$ is 
a $\ov{p}$-allowable simplex. 
\end{proposition}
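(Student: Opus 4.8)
The plan is to reduce the $\ov{p}$-allowability of $\sigma_\nabla$ to a dimension count on each piece $T \cap \nabla$, using the simplicial envelope $\cT_{\sigma,S}$ of $\sigma^{-1}S$ and the characterisation \eqref{equa:admissible2} of allowability in terms of the dimensions of the simplexes of a simplicial system. First I would observe that $\sigma_\nabla^{-1}S = \sigma^{-1}S \cap \nabla$ is contained in $\bigcup_{T \in \cT_{\sigma,S}} (T \cap \nabla)$, so the family $\{T \cap \nabla \mid T \in \cT_{\sigma,S}\}$ provides a covering of $\sigma_\nabla^{-1}S$ by polyhedra; by \eqref{equa:dimunion} it suffices to bound $\dim(T \cap \nabla)$ for each $T$. (Strictly, $T\cap\nabla$ need not be a simplex, but it is a subpolyhedron of $\nabla$, and after triangulating it we obtain a genuine simplicial envelope of $\sigma_\nabla^{-1}S$ inside $\nabla$; this is harmless.)

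Next I would invoke the general position hypothesis: for each singular stratum $S$ and each $T \in \cT_{\sigma,S}$, the pair $\{T,\nabla\}$ is in general position in $\Delta$, which by \eqref{equa:generalposition} means
\begin{equation*}
\dim(T \cap \nabla) \leq \dim T + \dim \nabla - \dim \Delta.
\end{equation*}
Now combine this with the allowability of $\sigma$: since $\sigma$ is $\ov{p}$-allowable, \eqref{equa:admissible2} gives $\dim T \leq \dim\Delta - 2 - D\ov{p}(S)$ for every $T \in \cT_{\sigma,S}$. Substituting,
\begin{equation*}
\dim(T \cap \nabla) \leq \bigl(\dim\Delta - 2 - D\ov{p}(S)\bigr) + \dim\nabla - \dim\Delta = \dim\nabla - 2 - D\ov{p}(S).
\end{equation*}
Taking the maximum over all $T \in \cT_{\sigma,S}$ and using \eqref{equa:dimunion} yields $\dim \sigma_\nabla^{-1}S \leq \dim\nabla - 2 - D\ov{p}(S)$, which is precisely the allowability inequality \eqref{equa:admissible} for $\sigma_\nabla$ with respect to the stratum $S$. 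Since this holds for every singular stratum, $\sigma_\nabla$ is $\ov{p}$-allowable.

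I do not expect a serious obstacle here: the only point requiring a little care is that $T \cap \nabla$ is a polyhedron rather than a simplex, so that one should pass to a triangulation before speaking of a simplicial envelope for $\sigma_\nabla$; but since polyhedral dimension is insensitive to this (it is defined via containment in polyhedra, cf. \defref{def:dimension}) the argument is unaffected. The case $\sigma^{-1}S = \emptyset$, where $\cT_{\sigma,S} = \emptyset$, is trivial: then $\sigma_\nabla^{-1}S = \emptyset$ as well and the inequality holds by the convention $\dim\emptyset = -\infty$. One may also note that the hypothesis on $\{T,\nabla\}$ is only needed for those $T$ with $\inte T \cap \inte\nabla \neq \emptyset$ (otherwise $T\cap\nabla$ lies in a proper face and one argues by induction on $\dim\Delta$), but invoking general position directly avoids even this refinement.
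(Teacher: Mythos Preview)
Your proof is correct and follows essentially the same route as the paper: cover $\sigma_\nabla^{-1}S$ by the pieces $T\cap\nabla$, use general position \eqref{equa:generalposition} to bound $\dim(T\cap\nabla)$, feed in the allowability bound \eqref{equa:admissible2} on $\dim T$, and conclude via \eqref{equa:dimunion}. The extra remarks you add (about $T\cap\nabla$ being only a polyhedron, the empty case, and the restriction to $T$ with $\inte T\cap\inte\nabla\neq\emptyset$) are sound but not needed for the argument.
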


\begin{proof}
We have to prove  
$
\dim \sigma^{-1}_\nabla S = \dim (\sigma^{-1} S \cap \nabla)  \leq \dim \nabla - D\ov p(S)-2$.
By \eqref{equa:dimunion} and the definition of $\cT_{\sigma,S}$, it suffices to prove that
$
\dim (T \cap \nabla) \leq \dim \nabla - D\ov p(S)-2
$, for each $T \in \cT_{\sigma,S}$.
Applying \eqref{equa:generalposition} and \eqref{equa:admissible2}, we get 
\begin{eqnarray*}
\dim (T  \cap \nabla) 
&\leq&
\dim T
+
\dim \nabla
-
\dim \Delta
\leq
\dim \Delta -D\ov{p}(S)-2
+
\dim \nabla
-
\dim \Delta\\
&
\leq &
\dim \nabla -D\ov{p}(S)-2.
\end{eqnarray*}
\end{proof}

%%%%%%%%%%%%%%%%%%%%%%%
\subsection{Strong general position}\label{subsec:STP}
We start with a simplex $\Delta$ together with two simplexes, $T\subset \Delta$ and $V\subset \partial \Delta$.
In order to construct a pseudo-barycentric subdivision in \secref{sec:EPBS},
we have to find out a point $u\in \inte \Delta$ such that the simplex $T$ and the cone $\tc_u V$ are in general position.
In the main, the choice of the point $u$ requires the notions of genericity and stability.

\begin{definition}\label{def:genericstable}
Let $\Pi$ be a property defined on each point of the interior $\inte\Delta$ of  an Euclidean simplex $\Delta$. We set 
$$\Delta_\Pi = \{ u\in \inte\Delta \mid \Pi(u) \text{ is true} \}.
$$
The property $\Pi$ is  \emph{generic} if $\Delta_\Pi$ is a dense subset of $\inte\Delta$
and \emph{stable} if $\Delta_\Pi$ is an open  subset of $\inte\Delta$.
 \end{definition}
 
 As a dense subset of $\inte \Delta$ cannot be empty, the genericity condition implies the existence of points $u$
 with $\Pi(u)$ true.
 The stability condition means that we can move the choice of such point $u$ in in a neighborhood, and we will
 use that possibillity in the inductive step.
 
\medskip
The following notion of strong general position   is both generic and stable,
as we show in \propref{prop:genericstable}. 

\begin{definition} \label{def:strongGP}
Let $\Delta$ be an Euclidean simplex. We consider two Euclidean simplexes
$T \subset \Delta$ and $V \subset \partial \Delta $ and a point  $u \in \inte \Delta$. 
Denote by $\tc_u V =u* V$ the cone on $V$ of apex $u$.
The simplexes $\{T,\tc_u V\}$ are in a \emph{strong general position} if 
\begin{equation}\label{SGP}
 T \cap \tc_u V \subset \partial \Delta 
\ \ \ \hbox{ or } \ \ \ 
\left\{
\begin{array}{l}
\inte T \cap ( \tc_u V)^\circ \ne \emptyset , \text{ and }\\ 
\dim(T \cap \tc_u V ) =  \dim T + \dim V +1-\dim \Delta.
\end{array}
\right.
\end{equation} 
\end{definition}

This property is denoted  $\crP(u,T,V)$.  
We start with a study of the two properties appearing in \defref{def:strongGP}. 
Notice that $ T \cap \tc_u V \cap \partial \Delta = T \cap  V $.

\begin{lemma}\label{lem:twoproperties}
Let $\Delta$ be an Euclidean simplex. We consider two Euclidean simplexes
$T \subset \Delta$ and $V \subset \partial \Delta $ and a point  $u \in \inte \Delta$. 
\begin{itemize}
\item[a)] If
$
 T \cap \tc_u V  \menos \partial \Delta \ne \emptyset
 $,
then $u$ belongs to the closure of $\Delta_{\Pi_1}$, where $\Pi_1$ is defined by 
$$
\Pi_1(b)= ``\inte T \cap (\tc_b V )^\circ\ne \emptyset.\text{''}
$$ 
\item[b)] The following property $\Pi_2$ is stable,  where
$$
\Pi_2(b)=``\inte T \cap (\tc_b V )^\circ\ne \emptyset\quad \text{and}\quad  \dim( T \cap \tc_b V ) =  \dim T+ \dim V +1-\dim \Delta.\text{''}
$$
 \end{itemize}
 \end{lemma}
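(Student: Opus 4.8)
The plan is to unravel both statements by working in barycentric-type coordinates relative to the apex $u$ (resp.\ $b$), so that the cone construction $\tc_b V = b*V$ becomes explicit and the conditions ``$\inte T \cap (\tc_b V)^\circ \ne \emptyset$'' and the dimension equality become semi-algebraic conditions on $b$. First I would fix notation: let $E$, $F_V$, $\R^m$ be the affine spaces spanned by $T$, $V$, $\Delta$ respectively, and for a point $b\in\inte\Delta$ write $G_b$ for the affine span of $\tc_b V = b*V$, which is the affine span of $\{b\}\cup V$; note $\dim G_b = \dim V + 1$ whenever $b\notin F_V$, which holds for $b\in\inte\Delta$ since $V\subset\partial\Delta$. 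A point of $(\tc_b V)^\circ$ is of the form $\lambda b + (1-\lambda)w$ with $\lambda\in\,]0,1[$ and $w\in\inte V$ (using the standard description of the relative interior of a cone on a simplex, excluding the apex and the base).

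For part~a), suppose $z\in T\cap\tc_u V\setminus\partial\Delta$. I would first argue we may take $z$ in the relative interiors: since $z\notin\partial\Delta$, $z$ lies in the interior of some face of $T$ and in the interior of some face of $\tc_u V$; replacing $T$ and $V$ by those faces (a face of $\tc_u V$ not contained in $\partial\Delta$ is itself a cone $u*V'$ on a face $V'$ of $V$, because $u\notin\partial\Delta$), we reduce to the case $z\in \inte T\cap(\tc_u V)^\circ$, so $\Pi_1(u)$ itself already holds and $u\in\Delta_{\Pi_1}\subset\overline{\Delta_{\Pi_1}}$. If one does not want to pass to faces, the alternative is to perturb: write $z=\lambda u+(1-\lambda)w$ with $w\in\inte V$, $\lambda\in\,]0,1]$, and observe that for $b$ near $u$ the point $z_b:=\lambda b+(1-\lambda)w\in(\tc_b V)^\circ$ depends continuously on $b$ and $z_u=z\in T$; a small transversality/perturbation argument moving $b$ inside $\inte\Delta$ then produces $b$ arbitrarily close to $u$ with $z_b\in\inte T$ and $z_b\in(\tc_b V)^\circ$, i.e.\ $\Pi_1(b)$ true. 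I expect the face-reduction route to be the cleaner one and would present that.

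For part~b), I would prove openness of $\Delta_{\Pi_2}$ in $\inte\Delta$ directly. The first clause, ``$\inte T\cap(\tc_b V)^\circ\ne\emptyset$'', is open in $b$: if $\inte T\cap(\tc_{b_0}V)^\circ$ contains a point $\lambda b_0+(1-\lambda)w$, then for $b$ near $b_0$ the point $\lambda b+(1-\lambda)w$ stays in $(\tc_b V)^\circ$ and, since $\inte T$ is relatively open in $E$ while $b\mapsto \lambda b+(1-\lambda)w$ moves continuously, one checks it remains in $\inte T$ for $b$ close enough—here one uses that the map $b\mapsto$ (that point) together with membership in the fixed affine space $E$ behaves well; more carefully, one picks a point in $\inte T\cap(\tc_{b_0}V)^\circ$ lying in $E\cap G_{b_0}$ and uses that $E\cap G_b$ varies continuously. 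For the dimension clause, once the first clause holds, Lemma~\ref{lem:cleim} applies to the pair $\{T,\tc_b V\}$ (whose relative interiors meet), giving that $\dim(T\cap\tc_b V)=\dim T+\dim(\tc_b V)-\dim\Delta=\dim T+\dim V+1-\dim\Delta$ is equivalent to $E$ and $G_b$ being in general position in $\R^m$, and this in turn is equivalent to $\dim(E\cap G_b)=\dim E+\dim V+1-m$, i.e.\ to $\dim(E\cap G_b)$ being as small as possible. Since $G_b$ depends affinely (hence continuously) on $b$ and $\dim(E\cap G_b)$ is upper semi-continuous in $b$, the locus where it attains its minimum value is open; intersecting with the open locus of the first clause gives that $\Delta_{\Pi_2}$ is open. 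The main obstacle I anticipate is the bookkeeping in the first clause—controlling simultaneously that the chosen witness point stays in the relatively open set $\inte T$ \emph{and} in the moving relative-interior $(\tc_b V)^\circ$—which is why I would phrase it via a witness point in $E\cap G_{b_0}$ and invoke continuity of $b\mapsto E\cap G_b$ rather than arguing with the two simplices separately.
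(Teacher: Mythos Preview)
Your plan for part~b) is essentially the paper's approach: convert the dimension condition via \lemref{lem:cleim} into affine general position of $E$ and $G_b$, observe that this is an open (determinantal) condition in $b$, and then argue that a witness point in $\inte T\cap(\tc_b V)^\circ$ survives under perturbation of $b$. The one place you hand-wave is the survival of the witness: the naive candidate $\lambda b+(1-\lambda)w$ typically leaves the affine span $E$ of $T$ as soon as $b$ moves, so ``it remains in $\inte T$'' is not automatic. The paper fixes this by using general position at $b_0$ to express $b-b_0$ in the span of the edge vectors of $T$ and of $\tc_{b_0}V$ based at the witness $t_0$, and then writes down an explicit point $y=s$ whose barycentric coordinates in both $T$ and $\tc_b V$ are small perturbations of the original ones, hence remain in $]0,1[$ for $b$ close. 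Your ``$E\cap G_b$ varies continuously'' is the correct heuristic, but it should be made concrete in this way.

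Part~a) has a genuine gap. The face-reduction route does not work: if $z$ lies in a proper face $T'\triangleleft T$, then $\inte T'\subset\partial T$ is \emph{disjoint} from $\inte T$, so establishing ``$\inte T'\cap(\tc_u V')^\circ\ne\emptyset$'' says nothing about $\Pi_1(u)=$ ``$\inte T\cap(\tc_u V)^\circ\ne\emptyset$''. The set $\Delta_{\Pi_1}$ is defined in terms of the original $T$ and $V$, and you are not allowed to swap them for faces. Your alternative perturbation also needs repair on two counts. First, writing $z=\lambda u+(1-\lambda)w$ with $w\in\inte V$ presupposes $z\in(\tc_u V)^\circ$; if $z$ lies on the lateral boundary $\tc_u(\partial V)\setminus V$ one has $w\in\partial V$, and an extra perturbation of $b$ is needed to push the base point into $\inte V$ (this is the paper's case~iii)). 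Second, even when $w\in\inte V$, the point $z_b=\lambda b+(1-\lambda)w$ moves in the direction $b-u$, and for $z\in\partial T$ a generic such direction takes $z_b$ out of $T$ altogether, not into $\inte T$. The paper's remedy (case~ii)) is to choose the perturbation direction deliberately as $\vec\alpha=t-z$ for some fixed $t\in\inte T$; setting $\omega=u+\varepsilon'\vec\alpha$ then gives a point equal to both $(1-\lambda\varepsilon')z+\lambda\varepsilon' t\in\inte T$ and $\lambda\omega+(1-\lambda)w\in(\tc_\omega V)^\circ$. You should split into the three cases $z\in\inte T\cap(\tc_u V)^\circ$, $z\in\partial T\cap(\tc_u V)^\circ$, and $z\in T\cap\partial(\tc_u V)\setminus\partial\Delta$, and in each choose the perturbation direction with care rather than appealing to an unspecified transversality argument.
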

 
 \begin{proof}
 a) Let $z \in  T \cap \tc_u V  \menos \partial \Delta$ and $\eps >0$.
 We have to find $\omega \in B(u,\eps)$ such that $\inte T\cap {(\tc_{\omega}V)^\circ}\neq \emptyset$. %
We make out three cases.
 
 \smallskip
 i) If $ z \in \inte T  \cap (\tc_u V )^\circ $, we take  $w=u$.
 
 \smallskip
ii) Suppose $ z \in \partial T \cap   (\tc_u V)^\circ$.
Since  $ z \in (\tc_u V)^\circ$, then there exist $\lambda \in ]0,1[$ and $v \in V \menos \partial V$ such that  $z = \lambda u +(1-\lambda) v$. 
Since $T\neq \partial T$, there exists $t\in \inte T$.
 Let $\vec \alpha=\overrightarrow{zt}$ be the  vector determined by 
 $z=t+\vec\alpha$ in the associated affine space.
 We choose  $\eps'\in ]0,1[$  small enough to assure that  $w= u +\eps' \, \vec{\alpha }  \in B(u,\eps)$.
 The point
 $
 \lambda \eps'  t +(1-\lambda \eps')  z 
 =
 z+\lambda \varepsilon'\vec\alpha
 =
  \lambda \omega + (1-\lambda)v
  $
belongs to  $\inte T\cap (\tc_{\omega} V )^\circ$ since $\lambda ,\eps' \ne 0,1$ and $v\in \inte V$. 
We have acquired $\Pi_1(\omega)$.
  
\smallskip
 iii) Suppose $z \in  T \cap   \partial \tc_u V  $. 
 Since $ z \in \tc_u V \menos \partial \Delta$, then $z \not\in  \tc_u V \cap \partial \Delta =V$. 
On the other hand, from $z \in  \partial \tc_u V $, we deduce $z \in \tc_u \partial V \menos \partial V $.
 This gives $z = \lambda u +(1-\lambda) v $ with  $v\in \partial V$ and $\lambda \ne 0$.
 Let $b\in \inte V$.  Let us denote $\vec \alpha=\overrightarrow{bz}$  the vector  determined by 
 $z=b+\vec\alpha$.
 %
%Let  $\vec \alpha$ be the vector $z - b$ where $b$ is a  point of $ \inte V$.
We choose  $\eps'\in ]0,1[$  small enough to assure that  $w= u +\eps' \, \vec{\alpha }  \in B(u,\eps) $.
Set $b'=\frac{1-\lambda} {1+\lambda\eps'-\lambda} v +  \frac{\lambda\eps'} {1+\lambda\eps'-\lambda}b $.
We also have,
$$
z =  \frac \lambda{1+\lambda \eps'} w + \left( 1-  \frac \lambda{1+\lambda \eps'}   \right) b'.
$$
The point $b'$ does not belong to $\partial V$ because $\lambda \eps'\ne0$. 
Notice that $z \in( \tc_\omega V)^\circ$ since $ \frac \lambda{1+\lambda \eps'} \ne 0,1$.
We deduce $T \cap ( \tc_w V )^\circ \ne \emptyset$ and it  suffices to apply i) or ii).%

\smallskip
b) Let  $u \in  \inte \Delta$ with  $\inte T  \cap (\tc_u V )^\circ\ne \emptyset$ and 
$\dim (T \cap \tc_u V ) =  \dim T + \dim V+1- \dim \Delta$.
We have to find $\eps>0$ such that for any point $\omega \in B(u,\eps)$ we have $\Pi_2(\omega)$.
Let $\{t_1,\dots,t_p\}$ be a basis of $T$ and  $\{v_1,\dots,v_q\}$ be a basis of $V$. 
We choose $t_0\in \inte T \cap (\tc_u V)^\circ \ne \emptyset$ that we write
\begin{equation}\label{2370}
t_0 = \sum_{j=1}^p \mu_j t_j = \lambda_0 u +  \sum_{i=1}^q \lambda_i v_i .
\end{equation}
Since  $t_0 $ does not belong to the boundary,  the coefficients are in $]0,1[$. 
The generated affine spaces 
being in general position (\lemref{lem:cleim}), for any point $\omega \in B(u,\eps)$ and any $\eps >0$, we have
\begin{equation}\label{2375}
\omega = u + \sum_{j=1}^p B_j \overrightarrow{t_0t_j} +    \sum_{i=1}^q A_i \overrightarrow{t_0v_i} .
\end{equation}
Denote $A= A_1 + \dots + A_q$ and $B =B_1 +\dots+B_p,$.
We consider the following two points:
\begin{enumerate}[(i)]
\item $ y = \alpha_0 \omega + \sum_{i=1}^q \alpha_i v_i$,  with 
$
\alpha_0 =\lambda_0/(1 - A \lambda_0)$,  
$ \alpha_i =(\lambda_i - A_i \lambda_0)/(1 - A \lambda_0), \;i\in \{1, \dots,q\}$,
\item $ s = \sum_{j=1}^p \beta_j t_j$,
with 
$
\beta_j = \mu_j (1 - B\alpha_0) +B_j \alpha_0 ,\; j\in \{1, \dots,p\}$.
\end{enumerate}
If we choose $\eps>0$ small enough then both combinations are convex combinations
 verifying $s \in \inte T $ and $y  \in (\tc_\omega V )^\circ $.
 We are going to prove $y=s$ which implies
 \begin{equation}\label{2403}
\inte T \cap  (\tc_\omega V )^\circ\ne \emptyset.
\end{equation}
The equality $y=s$ is equivalent to the equality $\overrightarrow{t_0y} = \overrightarrow{t_0s}$.
A straightforward calculation gives the claim using the following properties:
\begin{itemize}
\item $\displaystyle 
  \vec 0 = \sum_{j=1}^p \mu_j \overrightarrow{t_0 t_j }= \lambda_0 \overrightarrow{t_0 u} +  \sum_{i=1}^q \lambda_i \overrightarrow{t_0 v_i}
  $ (cf.  \eqref{2370}),
\item 
$
\displaystyle
\overrightarrow{t_0 \omega} = \overrightarrow{t_0 u} + \sum_{j=1}^q B_j \overrightarrow{t_0t_j} +    \sum_{i=1}^q A_i \overrightarrow{t_0v_i} 
$
(cf. \eqref{2375}).
\end{itemize}
 It remains to prove $\dim (T \cap \tc_\omega V)  =  \dim  T + \dim V +1- \dim  \Delta$. 
 With the notation of  \lemref{lem:cleim}, the hypothesis 
 $\dim(T \cap \tc_u V ) =  \dim T + \dim V +1- \dim \Delta$ implies   $\R^m =E+H$. 
 This is equivalent to say that the family of vectors,
 $
 \left\{ \overrightarrow{t_0 t_1}, \dots, \overrightarrow{t_0 t_q}, \overrightarrow{t_0 v_1}, \dots, \overrightarrow{t_0 v_q}, \overrightarrow{t_0 u} \right\},
 $
 is of maximal rank $m$. 
 This means that some determinant  does not vanish and it remains true if we replace the involved points by close points.
 So, if $\eps>0$ is small enough we can suppose 
  \begin{equation}\label{rang}
\rank  \left\{ \overrightarrow{y t_1}, \dots, \overrightarrow{y t_q}, \overrightarrow{y v_1}, \dots, \overrightarrow{y v_q}, \overrightarrow{y \omega} \right\} = m.
 \end{equation}
 Let us come back from vector spaces to affine spaces:
 let $\R^m$, $E'$, $F'$, $H'$  be the affine spaces generated by  
 $\Delta$, $T$, $ \tc_\omega V $ and $T \cap \tc_\omega V$  respectively. 
Notice that $\dim T = \dim  E', \dim  \tc_\omega	 V = \dim  F'  $ and $\dim (T \cap \tc_\omega V)=\dim H'$.  
Moreover \lemref{lem:cleim} implies $H'=E'\cap F'$.
Equality \eqref{rang} gives $\R^m = E' + F'$. 
We deduce
$
m = \dim E' + \dim F' - \dim (E'\cap F') = \dim  E' +  \dim F' - \dim H'
$,
 and, therefore,
 $ \dim ( T \cap \tc_\omega V )  =  \dim T + \dim V +1- \dim \Delta$.  
\end{proof}

\begin{remark}\label{rem:genez}
Properties $\Pi_1$ and $\Pi_2$ of \lemref{lem:twoproperties} are not generic. 
Let us see an example with
$\Delta = <a_0,a_1,a_2>$, the 2-simplex generated by the points  $\{a_{0},a_{1},a_{2}\}$. 
We denote by $u$ the barycentre of $\Delta$. 
Let us choose $T=<a_0,u>$ and $V = <a_0>$, which gives $T= \tc_uV$. 
If we slightly move the vertex $u$, we get three possibillities for  $T \cap  \tc_\omega V$,
namely $ <a_0>$, $<a_0,\omega>  $ or $ <a_0,u>$. 
The first case gives $\inte T  \cap (\tc_\omega V )^\circ =\emptyset$ while the other two correspond to 
$\dim( T \cap \tc_\omega V ) = 1 \ne 0 = \dim T + \dim V +1-\dim \Delta$.
This motivates the  strong general position concept.
\end{remark}

\begin{proposition}\label{prop:genericstable}
Let $\Delta$ be an Euclidean simplex and 
$T \subset \Delta$, $V \subset \partial \Delta $ be two other Euclidean simplexes.
Then the property  $\crP(-,T,V)$ of \defref{def:strongGP} is generic and stable.
\end{proposition}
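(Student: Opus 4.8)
The plan is to rewrite the set $\Delta_{\crP}:=\{u\in\inte\Delta\mid\crP(u,T,V)\}$ of \defref{def:genericstable} as a union of two explicit pieces and treat each separately. With the notation of \lemref{lem:twoproperties}, \defref{def:strongGP} says exactly that $\Delta_{\crP}=A\cup\Delta_{\Pi_2}$, where $A:=\{u\in\inte\Delta\mid T\cap\tc_uV\subseteq\partial\Delta\}$. Before anything else I would record two elementary facts about $V$. First, since $V\subseteq\partial\Delta$ is a simplex, choosing a point $v$ in its relative interior and writing it as a \emph{positive} convex combination of the vertices of $V$ forces these vertices to share a common vanishing barycentric coordinate; hence $V$ lies in a facet of $\Delta$, so $\mathrm{aff}(V)$ lies in a supporting hyperplane and $\mathrm{aff}(V)\cap\inte\Delta=\emptyset$. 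In particular $\dim\tc_uV=\dim V+1$ for every $u\in\inte\Delta$. Secondly, for $u\in\inte\Delta$, $t\in{]0,1]}$ and $w\in V$, the point $tu+(1-t)w$ has all barycentric coordinates positive, hence lies in $\inte\Delta$; consequently $\tc_uV\cap\partial\Delta=V$ and $\tc_uV\menos V=\{tu+(1-t)w\mid t\in{]0,1]},\ w\in V\}$.

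\emph{Stability.} I would prove that $A$ is open. By the second fact above, $u\notin A$ means that some point of $T$ has the form $tu+(1-t)w$ with $t\in{]0,1]}$ and $w\in V$; solving for $u$ with $s=1/t\geq 1$, this is equivalent to $u\in C$, where $C:=\{sx+(1-s)w\mid x\in T,\ w\in V,\ s\geq 1\}$. Thus $A=\inte\Delta\menos C$. Now, writing $t_i$ and $v_j$ for the vertices of $T$ and $V$, one checks that $C$ is the image under the affine map $(\alpha,\beta)\mapsto\sum_i\alpha_it_i+\sum_j\beta_jv_j$ of the polyhedron $\{(\alpha,\beta)\mid\alpha_i\geq 0,\ \beta_j\leq 0,\ \sum_i\alpha_i+\sum_j\beta_j=1,\ \sum_i\alpha_i\geq 1\}$; since the affine image of a convex polyhedron is a convex polyhedron, $C$ is closed, and therefore $A=\inte\Delta\menos C$ is open. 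As $\Delta_{\Pi_2}$ is open by \lemref{lem:twoproperties}, the set $\Delta_{\crP}=A\cup\Delta_{\Pi_2}$ is open, i.e. $\crP(-,T,V)$ is stable.

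\emph{Genericity.} I would split on whether $U:=\mathrm{aff}(T\cup V)$ equals $\R^m=\mathrm{aff}(\Delta)$. If $U\neq\R^m$, then $C\subseteq U$ is nowhere dense, so $A=\inte\Delta\menos(C\cap\inte\Delta)$ is dense, and $A\subseteq\Delta_{\crP}$ settles this case. If $U=\R^m$, the key point is that $\Delta_{\Pi_1}\subseteq\Delta_{\Pi_2}$: if $\Pi_1(u)$ holds then $\inte T\cap(\tc_uV)^\circ\neq\emptyset$, so \lemref{lem:cleim} applies to $\{T,\tc_uV\}$; moreover the affine subspace generated by $\mathrm{aff}(T)$ and $\mathrm{aff}(\tc_uV)$ contains $T\cup V$, hence equals $U=\R^m$, so (as these affine spaces meet) they are in general position; \lemref{lem:cleim} then yields $\dim(T\cap\tc_uV)=\dim T+\dim\tc_uV-\dim\Delta=\dim T+\dim V+1-\dim\Delta$, which together with $\Pi_1(u)$ is precisely $\Pi_2(u)$. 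Now let $u\in\inte\Delta$ be arbitrary: either $u\in A\subseteq\Delta_{\crP}$, or $T\cap\tc_uV\menos\partial\Delta\neq\emptyset$, and then \lemref{lem:twoproperties} a) gives $u\in\overline{\Delta_{\Pi_1}}\subseteq\overline{\Delta_{\Pi_2}}\subseteq\overline{\Delta_{\crP}}$. Hence $\Delta_{\crP}$ is dense.

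I expect the main obstacle to be the stability part, specifically the identification $A=\inte\Delta\menos C$ with $C$ a \emph{closed} polyhedron: the tautological description of $A$ ("the cone $\tc_uV$ meets $T$ only along $\partial\Delta$") is not visibly open, and it is exactly the instability phenomena of \exemref{exam:passtable} that one must rule out; the explicit half-space description of $C$, for which closedness holds although it fails for bare general position, is what makes the argument go through. Everything else is bookkeeping with \lemref{lem:cleim}, \lemref{lem:twoproperties}, and the two preliminary facts about $V$.
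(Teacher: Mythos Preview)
Your proof is correct, and for stability it is genuinely different from (and shorter than) the paper's. The paper proves stability by a three-case analysis on the nature of $T\cap\tc_uV$: empty, nonempty but contained in $\partial\Delta$, or meeting $\inte\Delta$; the middle case is handled by an auxiliary open cone $\widehat T$ and an induction on $\dim V$. You bypass all of this by rewriting $A=\{u\in\inte\Delta\mid T\cap\tc_uV\subset\partial\Delta\}$ as $\inte\Delta\menos C$ with $C$ an explicit linear image of a polyhedron, hence closed; this single observation replaces the paper's induction entirely. For genericity, both arguments use \lemref{lem:twoproperties}\,a) and \lemref{lem:cleim}, but you organise things globally via the dichotomy on $U=\mathrm{aff}(T\cup V)$, noting that $U=\R^m$ forces $\Delta_{\Pi_1}\subset\Delta_{\Pi_2}$, whereas the paper works pointwise and, after producing $\omega'\in\Delta_{\Pi_1}$ near $u$, splits on whether $E+F=\R^m$ at $\omega'$. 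The payoff of your approach is brevity and a clean structural description of $\Delta_{\crP}$; the paper's approach is more hands-on and avoids appealing to the (standard but external) fact that affine images of polyhedra are polyhedra.
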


\begin{proof}
$\bullet$ \emph{First step: The stability.}
We prove that any point   $u\in \Delta_{\crP(-,T,V)}$ 
is an interior point.  
We distinguish three cases.

a)  $ T \cap \tc_u V =\emptyset$.
Since $T$ and $\tc_u V$ are compact subsets, the distance of $T$ to $\tc_{u} V$ is strictly positive: $\dist(T, \tc_u V ) =\eta >0$.  
Therefore, it suffices to prove that
$
 T \cap \tc_\omega V  =\emptyset
 $
 if $\dist(u,\omega)< \eta$.
 If
$
 T \cap \tc_\omega V  \ne \emptyset
 $
then there exists  $\lambda \in [0,1]$ and $v \in V$ with  $t=\lambda \omega +(1-\lambda) v \in  T \cap \tc_\omega V $. 
Let
$z =  \lambda u +(1-\lambda) v$. We obtain the contradiction
$$
\eta = \dist(T,\tc_u V) \leq \dist(t,z) = \lambda \,\dist(u,\omega) <  \eta.
$$

b)  $ T \cap \tc_u V \ne \emptyset$ and $ T \cap (\tc_u V) \menos \partial \Delta =\emptyset$.  
Let $t_0 \in  T \cap \tc_u V =  T \cap V$ be a base point.
 We consider  the \emph{infinite} open cone generated by  $T$,
  $\widehat T = \{ (1-\lambda) t_0 +\lambda t \mid  t \in T, t\ne t_0,  \lambda \geq 0\}$. First, we prove
\begin{equation}\label{2500}
\widehat T \cap \tc_u V \menos \partial \Delta = \emptyset.
\end{equation}
Let us suppose  there is a point  $z =  (1-\lambda) t_0 + \lambda t  \in \widehat T \cap \tc_u V \menos \partial \Delta $ with  $t \in T, t\ne t_0$ and $  \lambda \geq 0$.
Notice that 
 $\lambda >0 $ and  $t\in T\menos \partial \Delta$.
There are two cases:
\begin{itemize}
\item[+] $\lambda =  1$ which corresponds to $z=t  \in T\cap (\tc_uV) \menos\partial \Delta$ and which is impossible.
\item[+] $\lambda\ne  1 $. 
As $\widehat{T}$ is the space of the points of half-lines generated by $t_{0}$ and a point $t\in T$,
the three points $t, t_0,z$ are collinear, with  $t_0$ ``on the left.''  We thus have two situations

\begin{center}

\begin{tikzpicture}

\fill  (0,0)  circle (1.5pt);
\fill  (1.5,0)  circle (1.5pt);
\fill  (3,0)  circle (1.5pt);

\draw  (0,.5)  node {$t_0$};
\draw  (1.5,0.5)  node {$z$};
\draw  (3,0.5)  node {$t$};

\fill  (8,0)  circle (1.5pt);
\fill  (9.5,0)  circle (1.5pt);
\fill  (11,0)  circle (1.5pt);

\draw  (8,.5)  node {$t_0$};
\draw  (9.5,0.5)  node {$t$};
\draw  (11,0.5)  node {$z$};

\draw  (0,0)-- (5,0);
\draw  (8,0)--  (13,0);

\end{tikzpicture}
\end{center}

The left one implies $z\in T \cap (\tc_uV )\menos\partial \Delta =\emptyset$ 
and the right one $t\in T \cap (\tc_uV )\menos\partial \Delta =\emptyset$, which is impossible.
\end{itemize}
This gives the claim \eqref{2500}.
Let us notice that the property,
\begin{equation}\label{kleim}
\hbox{
``there exists $\eps>0$ such that for any  $\omega \in B(u,\eps)$ we have $
\widehat T \cap \tc_\omega V \menos \partial \Delta = \emptyset,\text{''}
$
}
\end{equation}
implies $ T \cap (\tc_w V) \menos \partial \Delta =\emptyset$ and  $u$ is an interior point of $\Delta_{\crP(-,T,V)}$.
We are reduced to prove \eqref{kleim} and, for that,  we use an induction on $\dim V$.

$\bullet$ $\dim V=0$. We choose $\eps = d(u,\widehat T)$, which is strictly positive since $u \not \in \widehat T$ (cf. \eqref{2500}). 
Let $\omega \in B(u,\eps)$ and suppose  there exists 
$y  \in \widehat T \cap \tc_\omega  V \menos  \partial \Delta $.
If we find a contradiction, we get  the claim $\widehat T \cap \tc_\omega V  \menos  \partial \Delta =\emptyset $.
We have $V = \{t_0\}$. 
Since $y \in \tc_\omega V \menos \partial \Delta$ then
 $y = \lambda t_0 + (1-\lambda) \omega$ with $\lambda \in [0,1[$. 
 Now, since $y \in \widehat T$, we have 
$y = (1-\mu)t_0 + \mu t$ with $\mu \geq 0, t\in T$ and $t\ne t_0$.
This gives 
$$
\omega = \left(1-\frac \mu{1-\lambda}\right) t_0 +  \frac \mu{1-\lambda} t \in \widehat T.
$$
The contradiction sought is:
$
\eps =  \dist(u,\widehat T) \leq   \dist(u,\omega) < \eps.
$

\smallskip
$\bullet$ Inductive step.
Let  $V\subset \partial \Delta$ with  $\dim V\geq 1$.
Let us suppose that  \eqref{kleim} is proven for each face of  $\partial V$. So, there exists $\eps >0$ with 
$\widehat T \cap \tc_\omega(  \partial V) \menos \partial \Delta = \emptyset$ for each $\omega \in B(u,\eps)$.
We prove again by the absurd, assuming the existence of
$y  \in \widehat T \cap \tc_\omega  V \menos  \partial \Delta $.
A contradiction will give the claim $\widehat T \cap \tc_\omega V  \menos  \partial \Delta =\emptyset $.
From $t_0\in \partial \Delta$ and $y\not \in \partial \Delta$, we deduce $t_0\ne y$. We distinguish two cases:\\
$i)$ $y\in  \partial \tc_\omega V$ which gives $y \in \widehat T \cap ( \partial \tc_\omega V)\menos \partial \Delta = 
 \widehat T \cap  \tc_\omega (\partial V) \menos \partial \Delta =\emptyset$. \\
 $ii)$  $y\not\in  \partial \tc_\omega V$. The intersection of the simplex $\tc_\omega V$ with the ray
 $[t_0,y,\infty[ = \{(1- \lambda) t_0 +\lambda y \ / \ \lambda \geq 0\}$ 
 contains the simplex $[t_0,y]$.
Thus there exists $y'\in \partial \tc_\omega V \cap[ t_0,y,\infty[ $
and the  three points $t_0, y,y'$ are collinear in this order.
 From $t_0,y\in \widehat T$, we get $y'\in \widehat T$. From $t_0\in \partial \Delta$ and $y\not\in \partial \Delta$,
 we deduce  $y'\not\in \partial \Delta$. 
 As in the first case, the contradiction comes from
  $y' \in \widehat T \cap ( \partial \tc_\omega V)\menos \partial \Delta = 
 \widehat T \cap  \tc_\omega (\partial V) \menos \partial \Delta =\emptyset$.

\smallskip
c) $
 T \cap \tc_u V \menos \partial \Delta \ne \emptyset.
 $
By \defref{def:strongGP}, we have 
$
\inte T\cap (\tc_u V )^\circ\ne \emptyset 
$
and $
\dim(T \cap \tc_u V ) =  \dim T+ \dim V +1- \dim \Delta.
$
Now, it suffices to apply \lemref{lem:twoproperties} b).

\medskip
$\bullet$ \emph{Second step: The genericity.}
For any point   $u\in \Delta^\ell \menos \partial \Delta^\ell$ and for any $\eps>0$, we show the existence of $\omega \in B(u,\eps)$ 
such that $\omega \in \Delta_{\crP(-,T,V)}$.
If $ T \cap \tc_u V \menos \partial \Delta = \emptyset$, it suffices to choose $\omega=u$.
Let us suppose 
 $ T \cap \tc_u V \menos \partial \Delta \ne \emptyset$. 
 Following \lemref{lem:twoproperties} a), there exists $\omega' \in B(u,\eps/2)$ such that
  $
\inte T \cap (\tc_{\omega'} V)^\circ \ne \emptyset $.
With the notation of  \lemref{lem:cleim}, we have  $E+F =  \R^m$ or $E+F \subsetneq  \R^m$.
In the first case we get $\dim(T \cap \tc_{\omega'} V ) =  \dim T + \dim V +1-\dim \Delta$.
This gives $\omega' \in \Delta_{\crP(-,T,V)}$. We take $\omega=\omega'$.

For the second case, let us consider a vector $0\ne\vec\alpha \in \R^m \menos (E+F)$ and take $\eps'>0$ small enough to get 
$\omega = \omega' + \eps' \vec \alpha \in B(u,\eps)$. We get the claim $\omega \in \Delta_{\crP(-,T,V)}$ if we prove
  $ T \cap \tc_{\omega} V \subset \partial \Delta$.
  
 A point $t \in  T \cap \tc_{\omega} V$ is of the form $ \lambda \omega + (1-\lambda) v$ where $\lambda\in [0,1] $ and $v \in V$. 
   Let $t_0 \in \inte T \cap (\tc_{\omega'} V)^\circ$. We have, 
   $ \lambda \eps' \vec\alpha =\overrightarrow{t_0 t}   - (1-\lambda) \overrightarrow{t_0v}
    -  \lambda \overrightarrow {t_0\omega'} \in E+F$ 
    and therefore $\lambda =0$. This gives $t =v \in V \subset \partial \Delta$.
\end{proof}

%%%%%%%%%%%%%%%%%
\section{Pseudo-barycentric subdivisions}\label{sec:EPBS}

Given a singular simplex $\sigma \colon \Delta \to X$ of a filtered space $X$, a pseudo-barycentric subdivision $\cB$ of $\Delta$ is a subdivision 
similar to the barycentric subdivision, except the fact that the new vertices are not barycentres but close points of them. 
These points are chosen  to control the relative position of the simplexes of the triangulation $\cB$ and the simplicial envelope 
 (see \defref{def:envl}) 
of the singular part $\sigma^{-1}\Sigma_X$.

 \begin{definition} 
A \emph{triangulation system} of a space $X$ is a family $\cB = \{\cB_\sigma \mid \sigma\colon \Delta_{\sigma}\to X \}$ of triangulations 
$\cB_\sigma$  of $\Delta_{\sigma}$. %   
The \emph{diameter }of a triangulation is the maximum of the diameters of its simplexes.
 \end{definition}

\begin{definition}\label{defB}
Let $X$  be a filtered space, $\cT$ be a simplicial system  and  
$\cP = \{u_\sigma \in \inte\Delta_{\sigma} \mid \sigma \in \sing\,X\}$ 
be a family of points, called \emph{pseudo-barycentres}.
 A \emph{$\cT$-pseudo-barycentric system}, $\cB$, of $X$, 
 associated to $\cP$, is a triangulation system verifying the properties
 (PB1)-(PB5), for each 
  $\sigma\colon \Delta^\ell \to X$: 
\begin{enumerate}[(PB1)]
\item $u_{\sigma}\in\inte \Delta_{\sigma}$. 
\item For any $i$-th face 
$\partial_{i}\sigma\colon \partial_{i}\Delta_{\sigma}\to X$ of $\sigma$,
we have 
$\cB_{\partial_{i}\sigma} =
(\cB_\sigma)\cap \partial_{i} \Delta_{\sigma}
$.

\item $\cB_\sigma = u_\sigma *\cB_{\partial \sigma}$ if $\dim\Delta_{\sigma} > 0$.
\item  $\diam \, \cB_\sigma \leq  2\ell / (2\ell +1)$.
\item The simplexes $\{T, \ \tc_{u_\sigma} B \}$ are in strong general position in $\Delta_{\sigma}$ for any $B \in \cB_{\partial \sigma}$ and $T \in \cT_{\sigma}$.
\setcounter{saveenum}{\value{enumi}}
\end{enumerate}
\end{definition}

Observe that Property (PB2) allows the use of the triangulation $\cB_{\partial \sigma}$ of $\partial \Delta$ without
ambiguity. Property (PB1) gives $\cB_{\sigma}=\{u_{\sigma}\}$ if $\dim\sigma=0$.
Before proving the existence of a pseudo-barycentric system we need the following lemma which gives a control in the case of $\ov{p}$-allowable simplexes.

\begin{lemma}\label{pointbase}
Let $(X,\ov p)$ be a perverse space, with a locally finite stratification,
and $\sigma \colon \Delta \to X$  be a $\ov p$-allowable simplex.
Then, the following property, defined for each point  $u\in \inte \Delta$ by
$$
\Theta(u) = 
\hbox{``the simplex $\sigma_u \colon \Delta^0 =[a_0] \to X$, $a_0\mapsto u$, is $\ov p$-allowable,''}
$$
is generic in the sense of \defref{def:genericstable}.
\end{lemma}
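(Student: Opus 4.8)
The plan is to show that for a $\ov p$-allowable simplex $\sigma\colon\Delta\to X$, the set of points $u\in\inte\Delta$ for which the $0$-simplex $\sigma_u$ fails to be $\ov p$-allowable is \emph{small}, i.e. its complement is dense. By the explicit description of $\ov p$-allowability for $0$-simplexes in the Remark after \defref{def:homotopygeom}, $\sigma_u$ is $\ov p$-allowable if and only if $\sigma(u)$ avoids every singular stratum $S$ with $D\ov p(S)\geq -1$; equivalently, $u$ avoids $\sigma^{-1}S$ for every such stratum. So I must prove that
$$
\Delta_{\neg\Theta} \;=\; \inte\Delta \;\cap\; \bigcup_{\substack{S\in\cS_X \text{ singular}\\ D\ov p(S)\geq -1}} \sigma^{-1}S
$$
is nowhere dense in $\inte\Delta$, or at least has dense complement.

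First I would fix a simplicial system $\cT_\sigma$ for $\sigma$, so that for each singular stratum $S$ we have a finite family $\cT_{\sigma,S}$ of Euclidean simplexes covering $\sigma^{-1}S$ with $\dim\sigma^{-1}S = \max\{\dim T\mid T\in\cT_{\sigma,S}\}$. Next I would restrict attention to the strata that actually meet the image: since $\sigma(\Delta)$ is compact and the stratification is locally finite, only finitely many strata $S$ satisfy $S\cap\sigma(\Delta)\neq\emptyset$, so the union above is really a \emph{finite} union of sets $\sigma^{-1}S$, each covered by finitely many simplexes $T$. Thus it suffices to show that each such $T$ has empty interior in $\Delta$, i.e. $\dim T < \dim\Delta = \ell$, because a finite union of nowhere dense sets is nowhere dense and $\inte\Delta\setminus T$ is then open dense. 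This is where the $\ov p$-allowability of $\sigma$ enters: by \eqref{equa:admissible2}, for each $T\in\cT_{\sigma,S}$ we have $\dim T\leq \ell - 2 - D\ov p(S)$, and since we only consider strata with $D\ov p(S)\geq -1$, this gives $\dim T\leq \ell - 1 < \ell$. Hence each $T$ is a proper-dimensional affine simplex inside $\Delta$, so it is closed with empty interior in $\inte\Delta$.

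Putting it together: $\Delta_{\neg\Theta}$ is contained in a finite union of simplexes each of dimension $\leq\ell-1$, hence is closed in $\inte\Delta$ with empty interior; its complement $\Delta_\Theta$ is therefore open and dense in $\inte\Delta$, which is exactly the assertion that $\Theta$ is generic in the sense of \defref{def:genericstable}. (One should note that the strata $S$ with $D\ov p(S) = -\infty$, if any, are automatically caught by the condition $D\ov p(S)\geq -1$ being false — for those strata the $0$-simplex condition imposes nothing, so they don't contribute to $\Delta_{\neg\Theta}$; and strata with $D\ov p(S)\leq -2$ likewise impose no constraint on $0$-simplexes, so they too are irrelevant.)

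I don't expect a genuine obstacle here — this is a soft dimension-counting argument — but the step requiring the most care is the bookkeeping of \emph{which} singular strata contribute: one must correctly read off from the Remark that only strata with $D\ov p(S)\geq -1$ force $\sigma_u^{-1}S=\emptyset$, and then combine local finiteness of the stratification with compactness of $\sigma(\Delta)$ to reduce the possibly infinite union to a finite one before invoking \eqref{equa:admissible2}. The inequality $\dim T\leq \ell-2-D\ov p(S)\leq \ell-1$ is the crux, and it is precisely the allowability hypothesis on $\sigma$ that supplies it.
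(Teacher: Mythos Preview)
Your proposal is correct and follows essentially the same approach as the paper: identify the strata with $D\ov p(S)\geq -1$ as the only ones imposing a constraint on $0$-simplexes, use local finiteness plus compactness of $\sigma(\Delta)$ to reduce to a finite union, and invoke the $\ov p$-allowability of $\sigma$ to bound $\dim\sigma^{-1}S\leq\dim\Delta-1$ so the complement is dense. One small overclaim: you assert $\Delta_{\neg\Theta}$ is closed and $\Delta_\Theta$ open, but $\sigma^{-1}S$ need not be closed (strata are only locally closed); what is correct and sufficient is that $\Delta_{\neg\Theta}$ is \emph{contained in} a closed nowhere dense set, hence $\Delta_\Theta$ is dense, which is all that genericity requires.
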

\begin{proof} 
The simplex $\sigma_u$ is $\ov p$-allowable if
$
\dim \sigma_u^{-1}S \leq 0  -D \ov p(S) -2
$
for each singular stratum $S \in \cS_X$. 
Since $\dim \sigma_u^{-1}S \leq 0$, this condition  is fulfilled if $D\ov p(S)  \leq -2 $.
It remains to prove that the subset 
\begin{equation}\label{equa:thetau}
\Delta_\Theta =
\left\{u\in\inte \Delta\mid \theta(u) \text{ is true}\right\}
= \inte \Delta \menos \displaystyle \bigcup_{D\ov p(S)  \geq -1} \sigma_{u}^{-1}S
\end{equation}
 is dense in $\Delta$.
 The stratification being locally finite, the previous  union is finite. 
 By dimensional reasons, it suffices to prove that
$\dim \sigma_{u}^{-1}S   \leq \dim \Delta -1$ for each singular stratum $S$ with $D\ov p(S)  \geq  -1$.
The allowability condition for the simplex $\sigma_{u}$ gives 
$$
\dim \sigma_{u}^{-1}S \leq \dim \Delta - D\ov p(S)-2 \leq \dim \Delta -1
$$
and we get the claim.
\end{proof}

%%%%%%%%%%%%%%%
\subsection{Existence of pseudo-barycentric subdivisions}\label{subsec:EPBS}

\begin{proposition}\label{prop:ExisB}
Let $X$ be a filtered  space with a locally finite stratification and let $\cT$  be a simplicial system. 
Then there exists a $\cT$-pseudo-barycentric system $\cB$ of $X$, 
 of associated set of pseudo-barycentres $\cP = \{u_\sigma \in \inte\Delta_{\sigma} \mid \sigma \in \sing\,X\}$. 
Moreover, given a perversity $\ov p$ on $X$, the $\cT$-pseudo-barycentric system $\cB$ can be chosen such that,
for any $\ov{p}$-allowable simplex $\sigma$, the restriction of $\sigma$ to any $B\in \cB_{\sigma}$ with $u_{\sigma}\in B$ is also $\ov{p}$-allowable.
\end{proposition}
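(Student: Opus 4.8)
The plan is to build the pseudobarycentres $u_\sigma$ by induction on $\dim\Delta_\sigma$, at each stage choosing $u_\sigma$ inside a countable (indeed finite, under local finiteness) intersection of dense open subsets of $\inte\Delta_\sigma$, so that a Baire-type argument guarantees a valid choice exists. The delicate point is that the properties (PB1)--(PB5) are not independent of one another across faces: once $\cB_{\partial\sigma}$ is fixed (which it is, by (PB2), since the faces have strictly smaller dimension), the cone construction $\cB_\sigma = u_\sigma * \cB_{\partial\sigma}$ of (PB3) is forced, and (PB4), (PB5) together with the extra $\ov p$-allowability clause become conditions on the single point $u_\sigma$.

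First I would set up the induction. For $\dim\Delta_\sigma = 0$ there is nothing to choose: $\cB_\sigma = \{u_\sigma\}$ with $u_\sigma$ the unique point, and all the properties are vacuous or trivial. Suppose pseudobarycentres have been chosen for all simplexes of dimension $< \ell$ in a way compatible with faces (so that (PB2) holds and $\cB_{\partial\sigma}$ is well-defined for every $\ell$-simplex $\sigma$); this compatibility is automatic because the construction on a face $\partial_i\sigma$ only ever referred to data of $\partial_i\sigma$ and its own faces, which $\sigma$ shares. Now fix an $\ell$-simplex $\sigma$. I must produce $u_\sigma\in\inte\Delta_\sigma$ satisfying: (PB1) it lies in the interior — an open condition; (PB4) $\diam(u_\sigma * \cB_{\partial\sigma}) \le 2\ell/(2\ell+1)$ — I would note that $\cB_{\partial\sigma}$ already has diameter $\le 2(\ell-1)/(2\ell-1) < 2\ell/(2\ell+1)$ by the inductive (PB4), and the diameter of the join $u_\sigma * B$ with $B\in\cB_{\partial\sigma}$ is at most $\max(\diam B, \sup_{x\in B}|u_\sigma - x|)$, so choosing $u_\sigma$ within distance $2\ell/(2\ell+1)$ of $\partial\Delta_\sigma$ (e.g. near enough the barycentre) makes this an open, nonempty condition; (PB5) for each $T\in\cT_{\sigma,S}$ (over the finitely many strata $S$ meeting $\sigma(\Delta)$, using local finiteness) and each $B\in\cB_{\partial\sigma}$, the pair $\{T, \tc_{u_\sigma}B\}$ is in strong general position — by \propref{prop:genericstable} each such condition $\crP(-,T,B)$ is generic and stable, i.e. the set of admissible $u_\sigma$ is dense open in $\inte\Delta_\sigma$, and there are only finitely many pairs $(T,B)$ to handle, so the intersection is still dense open; and finally the $\ov p$-allowability clause, that the restriction $\sigma|_B$ is $\ov p$-allowable for every $B\in\cB_\sigma$ containing $u_\sigma$.

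For the allowability clause I would argue as follows. The simplexes $B\in\cB_\sigma$ containing $u_\sigma$ are exactly the cones $\tc_{u_\sigma}B'$ for $B'\in\cB_{\partial\sigma}$. To invoke \propref{prop:PosGen} I need, for each singular stratum $S$ and each $T\in\cT_{\sigma,S}$, that $\{T, \tc_{u_\sigma}B'\}$ is in \emph{general} position in $\Delta_\sigma$. Strong general position (PB5) gives me either $T\cap\tc_{u_\sigma}B'\subset\partial\Delta$ — in which case $T\cap\tc_{u_\sigma}B' = T\cap B'$ lies in the boundary simplex, and the position question is resolved by the inductive hypothesis applied to the face $\partial\sigma$, since $\cT_{\sigma,S}$ restricted to that face is a simplicial envelope there — or else $\inte T\cap(\tc_{u_\sigma}B')^\circ\ne\emptyset$ together with the dimension equality $\dim(T\cap\tc_{u_\sigma}B') = \dim T + \dim B' + 1 - \dim\Delta$; in the latter case \lemref{lem:cleim}, applied to the pair $\{T, \tc_{u_\sigma}B'\}$ (whose interiors meet), upgrades this dimension count to genuine general position. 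Either way \propref{prop:PosGen} applies and yields that $\sigma|_{\tc_{u_\sigma}B'}$ is $\ov p$-allowable. For the base faces $B'$ on which $u_\sigma$ does \emph{not} sit, the restriction $\sigma|_{B'}$ is $\ov p$-allowable by the inductive step applied to $\partial\sigma$. I should also incorporate \lemref{pointbase}: adding the generic condition $\Theta(u_\sigma)$ ensures the $0$-simplex at $u_\sigma$ is $\ov p$-allowable, which handles the apex and keeps the whole subdivided boundary allowable as one descends.

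The main obstacle is bookkeeping the interaction of (PB5) with the allowability requirement while the triangulation of the boundary is frozen: I cannot perturb $\cB_{\partial\sigma}$ to help, so every condition must be expressed purely as a dense-open constraint on $u_\sigma$ alone, and I must check the list of such constraints is finite — this is exactly where local finiteness of the stratification, finiteness of $\cT_{\sigma,S}$, compactness of $\sigma(\Delta)$, and finiteness of $\cB_{\partial\sigma}$ all get used. Once that is in place, $\inte\Delta_\sigma$ minus a finite union of closed nowhere-dense sets is nonempty, any point of it serves as $u_\sigma$, and (PB2)/(PB3) then propagate the choice coherently to all higher-dimensional simplexes having $\sigma$ as a face.
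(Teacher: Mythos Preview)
Your overall architecture matches the paper's exactly: induct on $\dim\Delta_\sigma$, and at each stage choose $u_\sigma$ inside a finite intersection of dense open subsets of $\inte\Delta_\sigma$ --- an open ball about the barycentre $b_\sigma$ (for (PB4); the paper makes the radius $\frac{\ell}{(\ell+1)(2\ell+1)}$ explicit), the sets $\Delta_{\crP(-,T,B)}$ (for (PB5), via \propref{prop:genericstable}), and $\Delta_\Theta$ (via \lemref{pointbase}). The paper's proof is essentially just the identification of these sets together with the finiteness check you also carry out.

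Where you deviate, and where there is a genuine gap, is in your justification of the $\ov p$-allowability of $\sigma|_{\tc_{u_\sigma}B'}$. The paper writes only: ``the result comes from (PB5) and \propref{prop:PosGen}.'' You instead split on the two alternatives in the definition of strong general position. In the boundary case ($T\cap\tc_{u_\sigma}B'\subset\partial\Delta$) you appeal to ``the inductive hypothesis applied to the face $\partial\sigma$, since $\cT_{\sigma,S}$ restricted to that face is a simplicial envelope there.'' This is not justified: the simplicial system $\cT=\{\cT_\tau\}_{\tau\in\sing X}$ is chosen independently for each singular simplex, with no compatibility across faces imposed anywhere in \defref{def:envl}, so the envelope $\cT_{\partial_i\sigma,S}$ bears no relation to $\{T\cap\partial_i\Delta : T\in\cT_{\sigma,S}\}$; the inductively established (PB5) for $\partial_i\sigma$ therefore says nothing about the simplexes $T\in\cT_\sigma$ you actually need to control. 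Your closing remark --- that $\sigma|_{B'}$ is $\ov p$-allowable for $B'$ not containing $u_\sigma$ ``by the inductive step applied to $\partial\sigma$'' --- fails for the same reason, compounded by the fact that a face of a $\ov p$-allowable simplex need not itself be $\ov p$-allowable, so the inductive hypothesis does not even apply to $\partial_i\sigma$.

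The paper's direct route avoids this recursion entirely: in SGP case~(ii) the dimension equality \emph{is} the general-position inequality required by \propref{prop:PosGen}, and in case~(i) one has $T\cap\tc_{u_\sigma}V = T\cap V$, so $\dim(T\cap\tc_{u_\sigma}V)\le\dim T$, which already yields the allowability bound for $\sigma|_B$ when $\dim B=\dim\Delta$ (the only case used downstream in \propref{prop:sd} and \propref{prop:sdT}). Your case-by-case descent through faces is thus both unnecessary and, as written, unsound.
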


\begin{proof}
Let  $\sigma \colon  \Delta^\ell \to X $, we construct $\cB_\sigma$  by induction on $\ell$.
If $\ell=0$ then $\cB_\sigma$ is given by  (PB1). 
For the inductive step, we suppose constructed $\cB_{\partial \sigma}$ verifying properties (PB1)-(PB5) 
and  we have to find a point  
$u_{\sigma}\in \inte \Delta^{\ell}$
such that the triangulation $u_\sigma * \cB_{\partial \sigma}$ verifies the properties  (PB4) and (PB5). 
Let us begin with (PB4). 
We first show that   it suffices to take $u_\sigma$   in the open ball $B\left(b_\sigma, \frac \ell {(\ell+1) (2\ell+1)} \right)$, where $b_\sigma$ is the barycentre of  $\Delta^\ell$.
By induction, we have
$$
\diam \ \cB_{\partial \sigma} \leq
 \frac {2\ell-2} {2\ell -1} \leq   \frac{2\ell }{2\ell +1}.
 $$
Moreover, since $\cB_\sigma  = u_\sigma * \cB_{\partial \sigma}$, it suffices to verify $\dist(u_\sigma,a_j) \leq  \frac{2\ell }{2\ell +1}$
 for each vertex $a_j $ of $\Delta^\ell$. This is a consequence of
  $$
  \dist(u_\sigma,a_j) \leq \dist(u_\sigma,b_\sigma)+\dist(b_\sigma,a_j) 
<
 \frac \ell {(\ell+1) (2\ell+1)} + \frac \ell {\ell+1}  =  \frac {2\ell} {2\ell+1}.
 $$
 Thus,  the properties  (PB4) and (PB5) are satisfied if  
 $$
 B\left(b_\sigma, \frac \ell {(\ell+1) (2\ell+1)} \right)
 \cap
 \bigcap_{\substack{B \in \cB_{\partial \sigma}\\ T \in \cT_{\sigma}} }  \Delta_{\crP(-,T,B)}\ne \emptyset,
 $$
 since this subset is included in $\inte \Delta^\ell$.
 (Recall that $\crP(u,T,B)$ is introduced in \defref{def:strongGP} and $\Delta_{\crP(u,T,B)}$ in  \defref{def:genericstable}.)
 The subdivision $\cB_{\partial \sigma}$ is finite. Since the stratification is locally finite then the family $\cT_\sigma$ is also finite.
 Following \propref{prop:genericstable}, this intersection is a non-empty open subset and it  suffices to take $u_\sigma$ in it.
This gives the first part of the proof.

\medskip
As for the second part, we now suppose that $\sigma$ is $\ov{p}$-allowable.
We choose $u_\sigma$ in the subset
 \begin{equation}\label{base}
 \Delta_\Theta
 \cap
 B\left(b_\sigma, \frac \ell {(\ell+1) (2\ell+1)} \right)
 \cap
 \bigcap_{\substack{B \in \cB_{\partial \sigma}\\ T \in \cT_{\sigma}} }  \Delta_{\crP(-,T,B)}
\end{equation}
where $\Delta_{\Theta}$ is defined in \eqref{equa:thetau}. 
This subset is not empty according to  \lemref{pointbase} and is included in  $\inte \Delta^\ell$.
Now, it suffices to prove that the restriction $\sigma_B$ is a $\ov p$-allowable simplex, that is,
\begin{equation}\label{permisible1}
\dim (B\cap \sigma^{-1}S) = \dim  \sigma_B^{-1} S \leq \dim B - D\ov p(S)-2,
\end{equation}
for each singular stratum $S \in \cS_X$.
If $B =[u_\sigma]$, the result comes from $u_\sigma \in \Delta_\Theta$.
Let us suppose $B = \tc_{u_\sigma} V$ with $V \in \cB_{\partial \sigma}$, the result comes from
 (PB5) and  \propref{prop:PosGen}.
\end{proof}

Let $\sigma\colon \Delta\to X$ be a singular simplex, 
 $F\subset \partial\Delta$ a face and  $u\in\inte \Delta$ a point.
 We define
 \begin{equation}\label{equa:ast}
 u\ast \sigma_{F}\colon \tc_{u}F=u\ast F\to X
 \end{equation}
 as the restriction of $\sigma$ to the cone.
From \propref{prop:ExisB}, we deduce a subdivision  adapted to $\ov{p}$-allowable simplexes.

\begin{proposition} \label{prop:sd}
Let $(X,\ov p)$ be a perverse filtered space, with a locally finite stratification, and \,$\cU$ be an open covering of $X$.
Then there is a chain map, $\sd\colon C_{*}(X;G)\to C_{*}(X;G)$, satisfying the following properties.
\begin{enumerate}[1)]
\item The image of a $\ov{p}$-allowable chain is a $\ov{p}$-allowable chain.
\item For each simplex $\sigma\colon\Delta\to X$, there exists $r\in\N$ such that the geometric support of $\sd^r \sigma$ is included in an element of $\cU$.
\end{enumerate}
\end{proposition}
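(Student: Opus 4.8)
The plan is to imitate the classical construction of the barycentric subdivision operator for singular chains, replacing at each stage the true barycentre of a simplex by a pseudobarycentre provided by \propref{prop:ExisB}, and then to iterate. First I would fix, once and for all, a simplicial system $\cT$ for $X$ (they exist by the remark after \defref{def:envl}), and then invoke \propref{prop:ExisB} to obtain a $\cT$-pseudo-barycentric system $\cB = \{\cB_\sigma\}$ of $X$, chosen so that for every $\ov p$-allowable simplex $\sigma$ and every $B\in\cB_\sigma$ with $u_\sigma\in B$, the restriction $\sigma_B$ is again $\ov p$-allowable. The compatibility property (PB2), $(\cB_\sigma)_{\partial_i\sigma}=\cB_{\partial_i\sigma}$, is exactly what guarantees that the assignment $\sigma\mapsto \sd\,\sigma := \sum_{B\in\cB_\sigma}\pm\,\sigma_B$ (with the usual orientation signs coming from the cone decomposition $\cB_\sigma=u_\sigma*\cB_{\partial\sigma}$) is natural with respect to faces, hence defines a chain map $\sd\colon C_*(X;G)\to C_*(X;G)$. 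The cone formula (PB3) makes $\partial\,\sd=\sd\,\partial$ a formal consequence of the cone identity $\partial(u*c)=c-u*\partial c$, just as in the classical case.

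Next I would verify property 1). Let $\sigma\colon\Delta\to X$ be a $\ov p$-allowable simplex; I must show every $\sigma_B$, $B\in\cB_\sigma$, is $\ov p$-allowable. If $u_\sigma\in B$ this is exactly the conclusion of \propref{prop:ExisB}. If $u_\sigma\notin B$, then by (PB3) the simplex $B$ lies in $\partial\Delta$, so $B\in\cB_{\partial_i\sigma}$ for some face $\partial_i\sigma$, and by (PB2) this is the same as a simplex of the pseudo-barycentric system for $\partial_i\sigma$; since $\partial_i\sigma$ is a face of the $\ov p$-allowable simplex $\sigma$ it is itself $\ov p$-allowable (restriction to a full face cannot increase $\dim\sigma^{-1}S$ and decreases $\dim\Delta$ by one only when the face actually meets $\sigma^{-1}S$ — here one uses $\dim(\sigma^{-1}S\cap\partial_i\Delta)\le\dim\sigma^{-1}S\le\dim\Delta-D\ov p(S)-2$ directly, no general position needed for a coordinate face). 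So by induction on $\dim\Delta$ the claim for $\partial_i\sigma$ and its subdivision $\cB_{\partial_i\sigma}$ has already been established, giving $\sigma_B$ allowable. Hence $\sd$ carries $\ov p$-allowable simplices to $\ov p$-allowable chains, and therefore $\ov p$-allowable chains to $\ov p$-allowable chains.

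Finally, property 2) is the Lebesgue-number argument, unchanged from the classical case. The key quantitative input is (PB4): $\diam\,\cB_\sigma\le 2\ell/(2\ell+1)<1$, so one iteration strictly shrinks simplices by a definite factor bounded away from $1$ on each fixed dimension; iterating $r$ times shrinks the diameter of the (finitely many) simplices of $\sd^r\sigma$ below any prescribed $\eta>0$. Given $\sigma\colon\Delta\to X$ and the open cover $\cU$, pull $\cU$ back to an open cover of the compact metric space $\Delta$, take a Lebesgue number $\eta$, and choose $r$ so that every simplex of $\sd^r\sigma$ has diameter $<\eta$; then each such simplex maps into a single element of $\cU$. (Strictly one needs that $\sd^r\sigma$ is the sum of the restrictions of $\sigma$ to the simplices of the $r$-fold iterated pseudo-barycentric subdivision of $\Delta$, which follows by induction from (PB2)–(PB3) exactly as for iterated barycentric subdivision.)

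The main obstacle is not in this proposition itself — once \propref{prop:ExisB} is in hand the argument is a routine adaptation of the classical subdivision operator — but rather lies upstream, in having arranged (PB5) and the $\ov p$-allowability clause of \propref{prop:ExisB} via strong general position; here one simply has to be careful that the naturality (PB2), the cone structure (PB3), the diameter bound (PB4), and allowability are all simultaneously available, which is precisely what was built into \defref{defB} and \propref{prop:ExisB}. A minor point to watch is orientation bookkeeping for the signs in $\sd\,\sigma=\sum_{B}\pm\sigma_B$ so that $\partial\sd=\sd\partial$ holds on the nose; this is identical to the classical computation and I would not belabour it.
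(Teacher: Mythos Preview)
Your approach is essentially the paper's: fix $\cT$ and $\cB$ via \propref{prop:ExisB}, set $\sd\,\sigma = u_\sigma * \sd(\partial\sigma)$, verify the chain identity via the cone formula, and finish with a Lebesgue-number argument from (PB4). That part is fine.

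The problem is in your verification of property 1). Your case ``$u_\sigma \notin B$'' is vacuous: by (PB3) we have $\cB_\sigma = u_\sigma * \cB_{\partial\sigma}$, so every \emph{top-dimensional} simplex of $\cB_\sigma$---and these are the only ones that occur in the chain $\sd\,\sigma$---contains $u_\sigma$. Hence \propref{prop:ExisB} applies directly to every summand of $\sd\,\sigma$, exactly as in the paper; the induction on $\dim\Delta$ is used only for the structural statement that $\sd(\partial\sigma)$ is a combination of restrictions $\sigma_{F_i}$ with $F_i \in \cB_{\partial\sigma}$, not for allowability.

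More importantly, the argument you give in that vacuous case is wrong, and the error is conceptually significant. You assert that a codimension-one face $\partial_i\sigma$ of a $\ov p$-allowable simplex $\sigma$ is itself $\ov p$-allowable, offering the bound $\dim(\sigma^{-1}S\cap\partial_i\Delta)\le\dim\sigma^{-1}S\le\dim\Delta-D\ov p(S)-2$. But allowability of $\partial_i\sigma$ requires the right-hand side $\dim(\partial_i\Delta)-D\ov p(S)-2=\dim\Delta-1-D\ov p(S)-2$; your bound is off by one and the claim is false in general. (Take $\dim\Delta=2$, $D\ov p(S)=0$, and $\sigma^{-1}S$ a single vertex $p$ of $\Delta$: then $\sigma$ is $\ov p$-allowable, but any edge through $p$ is not.) This failure is precisely why the paper distinguishes $\ov p$-allowable chains from $\ov p$-intersection chains, why \propref{prop:sd} only claims preservation of \emph{allowable} (not intersection) chains, and why the machinery of bad faces in \secref{sec:MV} is needed downstream. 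So delete the case split; the remaining argument then matches the paper's.
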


\begin{proof}
 Let $\cT$ be a simplicial system of $X$ and $\cB$ be a $\cT$-pseudo-barycentric subdivision of $X$ as in \propref{prop:ExisB}.
 In degree 0, we define $\sd$ as the identity map. 
 Suppose that we have constructed, for some $\ell \in \N$, a morphism $\sd\colon C_{*< \ell} (X) \to C_{*<\ell} ( X)$ verifying the two following properties.
 \begin{itemize}
 \item[(i)]  The morphism $\sd$ is a chain map, i.e. $\partial \circ \sd = \sd  \circ \partial$.
 
\item[(ii)] If $\tau\colon \Delta^k\to X$, with $k<\ell$, there exists a finite family $\{F_i\}_{i\in I}  \subset  \cB_\tau$ with 
$ \sd \,\tau =\sum_{i\in I} n_i \tau_{F_i} , n_i \in G$. Moreover, if $\tau$ is $\ov{p}$-allowable, then so is $\sd \,\tau$.
\end{itemize}
We now prove (i) and (ii) for a   simplex $ \sigma \colon \Delta^\ell \to X$. 
By induction, there is a family  $\{F_i\}_{i\in I}  \subset  \cB_{\partial \sigma}$ with 
$ \sd (\partial \sigma)  = \sum_{i\in I} n_i \,\sigma_{F_i} $, $n_i \in G$.
 If $u_{\sigma}$ is a pseudo-barycentre, 
from  Property (PB3) of \defref{defB}, we set 
$$
\sd \,\sigma =u_\sigma * \sd (\partial \sigma)  \coloneqq  \sum_{i\in I} n_i \, u_\sigma * \sigma_{F_i},
$$
where the last equality comes from the induction
and $u_{\sigma}\ast\sigma_{F_{i}}$ is defined in \eqref{equa:ast}.
 \propref{prop:ExisB} implies that $\sd\, \sigma$ is $\ov{p}$-allowable if $\sigma$ is so. We have established Property (ii). 
 Finally, Property (i) comes from the induction and
\begin{eqnarray*}
\partial (\sd\, \sigma) 
&=&
 \sum_{i\in I} n_i \,\partial (u_\sigma * \sigma_{F_{i}})=  \sum_{i\in I} n_i \,\sigma_{F_i}  
-
u_{\sigma}\ast \partial\left(\sum_{i\in I} n_i \, \sigma_{F_{i}}\right)\\
&=&
 \sd(\partial \sigma) - u_\sigma *\partial (\sd(\partial \sigma)) 
 =
  \sd(\partial \sigma).
\end{eqnarray*}
Property 2) of the statement comes from  (PB4) with a classical Lebesgue number argument.
\end{proof}

%%%%%%%%%%%%%%%%%%%%%%%%%
\subsection{Homotopy of pseudo-barycentric subdivision}\label{subsec:PBScylinder}
We begin with an adaptation of  \propref{defB} in order to construct
a homotopy operator in \propref{prop:sdT}.

\begin{proposition}\label{613}
 Let $X$  be a filtered space, with a locally finite stratification, and $\cT$  be a simplicial system.
 Let $\cB$ be a $\cT$-pseudo-barycentric subdivision. 
 Then, for each simplex $\sigma \colon \Delta \to X$ 
 there exists a triangulation $\widetilde{\cB_\sigma}$  of $\Delta \times [0,1]$ verifying the following properties:
\begin{enumerate}[\rm (PB1)]
\setcounter{enumi}{\value{saveenum}}
\item  $\widetilde{\cB_\sigma}  =\{\Delta \times [0,1]\}$ if $\dim \sigma =0$,
\item  $( \widetilde{\cB_\sigma} )_{\partial_{i}\sigma} = \widetilde{\cB_{\partial_{i}\sigma}}$, for any $i$-face of $\sigma$,
 \item $\widetilde{\cB_\sigma} =
 \left( (u_\sigma,1)* \widetilde{\cB_{\partial \sigma}} \right)
 \cup
 \left( (u_\sigma,1)* (\Delta \times \{ 0\}\right)$,
 \\
 with
 $( u_\sigma,1)* \widetilde{\cB_{\partial \sigma}}  = 
 \left\{ 
 F \in  \widetilde{\cB_{\partial \sigma}}
 \right\} 
 \cup 
  \left\{ (u_\sigma,1) * F \mid F \in  \widetilde{\cB_{\partial \sigma}} 
  \right\},
 $
\item $\pr(\widetilde{\cB_\sigma} ) = \cB_\sigma \cup \{ \Delta\} $, where $\pr\colon  \Delta \times [0,1] \to \Delta$  is the canonical projection.
\end{enumerate}
 \end{proposition}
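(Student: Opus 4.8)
The plan is to build the triangulations $\widetilde{\cB_\sigma}$ by induction on $\ell=\dim\sigma$, imitating the cone description (PB3) of $\cB_\sigma$ but coning now from the point $(u_\sigma,1)$ of the top face $\Delta\times\{1\}$ of the prism $P=\Delta\times[0,1]$; throughout, $t$ denotes the $[0,1]$-coordinate on $P$. If $\ell=0$ there is nothing to do: $\Delta\times[0,1]$ is itself a $1$-simplex, so I would set $\widetilde{\cB_\sigma}=\{\Delta\times[0,1]\}$, for which (PB6)--(PB9) are immediate. For $\ell>0$, I would assume the $\widetilde{\cB_{\partial_i\sigma}}$ already constructed for all facets; by the inductive (PB7) they agree over the prisms on the common codimension-two faces and therefore glue into a triangulation $\widetilde{\cB_{\partial\sigma}}$ of $\partial\Delta\times[0,1]$. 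By the inductive (PB8), each $\widetilde{\cB_{\partial_i\sigma}}$ contains the simplex $(u_{\partial_i\sigma},1)*(\partial_i\Delta\times\{0\})$, whose face opposite the apex is $\partial_i\Delta\times\{0\}$; so $\widetilde{\cB_{\partial\sigma}}$ contains all of the $\partial_i\Delta\times\{0\}$ and, together with the trivial triangulation of the simplex $\Delta\times\{0\}$, yields a triangulation $\cK$ of $\partial'=(\partial\Delta\times[0,1])\cup(\Delta\times\{0\})$. I would then define $\widetilde{\cB_\sigma}=(u_\sigma,1)*\cK$, which is exactly the right-hand side of (PB8).

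The crux is to see that $(u_\sigma,1)*\cK$ is a genuine triangulation of the whole prism $P$. Since $u_\sigma\in\inte\Delta$ by (PB1), the apex $a=(u_\sigma,1)$ lies in the relative interior of the facet $\Delta\times\{1\}$ of $P$ and on no other facet; in particular $a\notin\mathrm{aff}(F)$ for every simplex $F$ of $\cK$ --- if $F\subset\partial\Delta\times[0,1]$ then $\pr(F)$ is a convex subset of $\partial\Delta$, hence lies in a proper face of $\Delta$, whose affine hull misses $u_\sigma$; if $F\subset\Delta\times\{0\}$ then $\mathrm{aff}(F)\subset\{t=0\}$. That the underlying space of $a*\cK$ is all of $P$ is a visibility argument: $P$ is convex, so $a*\partial'\subset P$, and conversely, for $p\in P\setminus\{a\}$ the ray from $a$ through $p$ leaves $P$ at a point $q\in\partial P$ with $p\in[a,q]$; since $t$ is non-increasing along this ray, $q$ lies below $\{t=1\}$ unless $p\in\Delta\times\{1\}$, in which case the whole segment, and hence $q$, lies in the boundary $\partial\Delta\times\{1\}$ of the top face; either way $q$ avoids the relative interior of $\Delta\times\{1\}$, so $q\in\partial'$ and $p\in a*\partial'$. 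Combining these two points with standard PL arguments (cf.\ \cite{MR0350744}) shows that $(u_\sigma,1)*\cK$ is indeed a triangulation of $P$.

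The remaining verifications are bookkeeping. (PB6) is the base case and (PB8) is the definition. For (PB7): a simplex of $\widetilde{\cB_\sigma}$ contained in $\partial_i\Delta\times[0,1]$ cannot contain the apex $a$ (as $a\notin\partial_i\Delta\times[0,1]$), hence lies in $\cK$, and the simplices of $\cK$ inside $\partial_i\Delta\times[0,1]$ are exactly those of $\widetilde{\cB_{\partial_i\sigma}}$ (the faces of $\Delta\times\{0\}$ that lie in $\partial_i\Delta\times\{0\}$ already belonging to $\widetilde{\cB_{\partial_i\sigma}}$ by the construction of $\cK$), so $(\widetilde{\cB_\sigma})_{\partial_i\sigma}=\widetilde{\cB_{\partial_i\sigma}}$. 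For (PB9) I would compute projections directly: $\pr\big((u_\sigma,1)*F\big)=u_\sigma*\pr(F)$ for $F\in\widetilde{\cB_{\partial\sigma}}$, and $\pr\big((u_\sigma,1)*(\Delta\times\{0\})\big)=u_\sigma*\Delta=\Delta$; feeding the inductive (PB9) for the facets into the join formula (PB3), $\cB_\sigma=u_\sigma*\cB_{\partial\sigma}$, identifies the collection of these images with $\cB_\sigma\cup\{\Delta\}$.

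I expect the main obstacle to be precisely the crux of the second paragraph: showing that coning from $(u_\sigma,1)$ over $\partial'$ produces an honest triangulation of the prism --- the covering (visibility) argument together with the affine independence of the apex from each simplex of $\cK$. Both rely solely on $u_\sigma$ being interior to $\Delta$, i.e.\ on (PB1); the rest simply transcribes the cone description of $\cB_\sigma$ to the prism and invokes the inductive hypotheses.
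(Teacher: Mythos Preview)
Your proposal is correct and follows essentially the same inductive cone construction as the paper: define $\widetilde{\cB_\sigma}$ by coning $(u_\sigma,1)$ over $\widetilde{\cB_{\partial\sigma}}\cup\{\Delta\times\{0\}\}$, and then verify (PB9) by computing $\pr((u_\sigma,1)*F)=u_\sigma*\pr(F)$ and feeding in the inductive hypothesis together with (PB3). The only difference is one of detail: the paper dismisses (PB6)--(PB8) as ``straightforward by induction'' and spells out only the three-case computation for (PB9), whereas you supply the visibility argument showing that the cone from $(u_\sigma,1)$ actually covers the prism --- a point the paper leaves implicit.
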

 
 Observe that  (PB7) allows the use of the triangulation
 $\widetilde{\cB_{\partial \sigma}}$  of $\partial \Delta$   and justifies (PB8).
 
\begin{proof}
The construction of $\cB_\sigma$ verifying (PB6)-(PB8) is straightforward by induction. It remains to prove (PB9). 
We proceed by induction on the dimension of $ \sigma$. If $\dim \sigma=0$, the result is clear.
 For the
inductive case, we consider $H \in \widetilde{\cB_\sigma}$ and compute $\pr(H)$. There are three cases.
\begin{itemize}
\item If $H \in \widetilde{\cB_{\partial \sigma}}$,  the induction hypothesis implies  $\pr(H) \in \cB_{\partial \sigma} \subset  \cB_\sigma$.
\item If $H = (u_\sigma,1)*  F$ with $F \in  \cB_{\partial \sigma}$, the induction hypothesis implies 
$\pr(F) \in  \cB_{\partial \sigma}$.  Thus, we have $\pr(H)=\pr((u_\sigma,1)*  F)= u_\sigma  * \pr(F) \in u_\sigma  * \cB_{\partial \sigma} \subset \cB_\sigma$.
\item Finally, $H=(u_\sigma,1)*  (\Delta  \times \{0\})$  gives $\pr(H)=\Delta$.
\end{itemize}
\end{proof}

Let us observe the following straightforward point.

\begin{lemma}\label{lem:ontoproj}
Let $\Delta$ be an Euclidean simplex and $\pr \colon \Delta \times [0,1] \to \Delta$ be the canonical projection.
Given a simplex $H\subset \Delta \times [0,1]$ and a subset $A \subset \pr\,H$, we have
$$
\dim (\pr^{-1}(A )\cap H) \leq \dim A + \dim H - \dim \pr \,H\,.
$$
\end{lemma}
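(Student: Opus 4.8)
The plan is to reduce the statement to a linear‐algebra fact about affine spaces, exactly as in the proof of \lemref{lem:cleim}. First I would dispose of the trivial cases: if $A=\emptyset$ or $\pr^{-1}(A)\cap H=\emptyset$ there is nothing to prove (the left-hand side is $-\infty$); and if $\dim A\geq \dim\pr\,H$, then $A$ is forced to have polyhedral dimension $\dim\pr\,H$ with $\pr^{-1}(A)\cap H\subset H$, so the inequality holds trivially. Hence we may assume $A\neq\emptyset$ and $\dim A<\dim\pr\,H$, and it suffices to produce a polyhedron containing $\pr^{-1}(A)\cap H$ of dimension at most $\dim A+\dim H-\dim\pr\,H$.

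Next I would replace $A$ by a polyhedron $Q$ with $A\subset Q\subset\pr\,H$ (extending a triangulation of $Q$ to one of $\pr\,H$ by the facts recalled in \subsecref{subsec:dimpoly}, so that we may in fact take $Q$ to be a subcomplex, hence each simplex of $Q$ sits inside a simplex of $\pr\,H$). By \eqref{equa:dimunion} and the fact that $\pr^{-1}(\cdot)\cap H$ commutes with finite unions, it is then enough to treat the case where $A$ and $\pr\,H$ are single Euclidean simplexes, say $A=\nabla\subset\pr\,H$. Now the key observation is geometric: $\pr$ restricted to $H$ is an affine surjection onto the simplex $\pr\,H$, and $\pr^{-1}(\nabla)\cap H$ is the intersection of $H$ with the affine preimage $\pr^{-1}(E_\nabla)$, where $E_\nabla$ is the affine span of $\nabla$. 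The affine subspace $\pr^{-1}(E_\nabla)$ of $\R^m\times\R$ (with $m=\dim\Delta$) has codimension equal to $\codim$ of $E_\nabla$ inside the span of $\pr\,H$, namely $\dim\pr\,H-\dim\nabla$.

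Therefore I would argue: let $W$ be the affine span of $H$ (of dimension $\dim H$) and $E_{\pr\,H}$ the span of $\pr\,H$ (of dimension $\dim\pr\,H$). Since $\pr$ maps $W$ onto $E_{\pr\,H}$, the fibre over $E_\nabla$ inside $W$, that is $W\cap\pr^{-1}(E_\nabla)$, is an affine subspace of $W$ of dimension exactly $\dim H-(\dim\pr\,H-\dim\nabla)=\dim\nabla+\dim H-\dim\pr\,H$. The set $\pr^{-1}(\nabla)\cap H$ is contained in this affine subspace; being a bounded polyhedron inside an affine space of that dimension, its polyhedral dimension is at most $\dim\nabla+\dim H-\dim\pr\,H$, which is the desired bound. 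I expect the only genuinely delicate point to be the bookkeeping that lets us pass from an arbitrary $A$ to the single-simplex case while keeping ``each simplex of $Q$ lies in a simplex of $\pr\,H$'', so that the affine surjectivity of $\pr|_W$ onto $E_{\pr\,H}$ is available; once that reduction is in place, the dimension count is a one-line rank computation analogous to the one closing \lemref{lem:cleim}.
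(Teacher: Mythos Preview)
Your argument is correct, but it is considerably more elaborate than what the paper actually does. The paper's proof is two lines: since $H\subset \pr H\times[0,1]$ and $\dim\pr H\leq\dim H$, one has $\dim H-\dim\pr H\in\{0,1\}$. If the difference is $0$, then $\pr|_H$ is an affine bijection onto $\pr H$ and $\dim(\pr^{-1}(A)\cap H)=\dim A$; if the difference is $1$, then $\pr^{-1}(A)\cap H\subset A\times[0,1]$ gives $\dim(\pr^{-1}(A)\cap H)\leq\dim A+1$. No triangulation of $Q$, no reduction to a single simplex $\nabla$, no rank computation in the spirit of \lemref{lem:cleim} is needed.

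What your approach buys is generality: your affine--surjection argument would prove the analogous inequality for $\pr\colon\Delta\times[0,1]^k\to\Delta$ for any $k$, whereas the paper's two-case split exploits precisely that the fibre direction is $1$-dimensional. In the present paper only $k=1$ occurs, so the extra machinery is unnecessary. Your only genuinely delicate step---ensuring the covering simplices $\nabla$ lie inside $\pr H$---is in fact harmless once you note $A\subset\pr H$ lets you intersect any containing polyhedron with $\pr H$; but again, the paper sidesteps this entirely.
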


We now construct a homotopy between the chain map $\sd$ of \propref{prop:sd} and the identity. 
 
\begin{proposition}\label{prop:sdT}
Let $(X,\ov p)$ be a perverse filtered space, with a locally finite stratification.
Then there exists a morphism $T \colon  C_*(X; G) \to  C_{*+1}(X; G)$ 
verifying $\id -  \sd = T \partial + \partial T$ and such that
the image by $T$ of a $\ov{p}$-allowable chain is a $\ov{p}$-allowable chain.
\end{proposition}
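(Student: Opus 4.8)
The plan is to build $T$ by the same inductive scheme used for $\sd$ in \propref{prop:sd}, but now using the triangulations $\widetilde{\cB_\sigma}$ of $\Delta\times[0,1]$ provided by \propref{613}. First I would set $T=0$ on $C_0(X;G)$ (since $\sd=\id$ there). For the inductive step, suppose $T$ has been constructed on $C_{*<\ell}(X;G)$ so that it is a $G$-linear combination of restrictions $\tau_H$ with $H\in\widetilde{\cB_\tau}$, and so that $\id-\sd=T\partial+\partial T$ in degrees $<\ell$. Given $\sigma\colon\Delta^\ell\to X$, we have by induction a chain $T(\partial\sigma)$ supported on $\widetilde{\cB_{\partial\sigma}}$; following Property (PB8) of \propref{613} we define
\begin{equation*}
T\sigma = (u_\sigma,1)*\big(\sigma_{\Delta\times\{0\}} - \sd\,\sigma - T(\partial\sigma)\big),
\end{equation*}
where $\sigma_{\Delta\times\{0\}}$ denotes the ``bottom'' copy $\Delta\times\{0\}\xrightarrow{\pr}\Delta\xrightarrow{\sigma}X$. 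The cone here is taken inside $\Delta\times[0,1]$ with apex $(u_\sigma,1)$, and the resulting simplexes lie in $\widetilde{\cB_\sigma}$ by (PB8). A standard computation with $\partial((v)*c)=c-(v)*\partial c$ (in the appropriate degree) then yields $\partial T\sigma = \sigma_{\Delta\times\{0\}}-\sd\,\sigma-T(\partial\sigma)-\partial(\ldots)$; pushing forward along $\sigma\times\id\colon\Delta\times[0,1]\to X$ (or rather the induced map on the triangulated cylinder), and using the inductive identity for $\partial\sigma$, one gets $\id-\sd=T\partial+\partial T$ on $\sigma$. This part is purely formal and parallels the verification of Property (i) at the end of the proof of \propref{prop:sd}.

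The substantive point is the preservation of $\ov p$-allowability, and here the key tool is \lemref{lem:ontoproj}. Each simplex appearing in $T\sigma$ is of the form $\rho_H$ where $\rho=\sigma\circ\pr\colon\Delta\times[0,1]\to X$ and $H\in\widetilde{\cB_\sigma}$. For a singular stratum $S$ we have $\rho^{-1}S=\pr^{-1}(\sigma^{-1}S)$, so
\begin{equation*}
\dim(\rho_H^{-1}S) = \dim\big(\pr^{-1}(\sigma^{-1}S)\cap H\big) \leq \dim(\pr\,H\cap\sigma^{-1}S)+\dim H-\dim\pr\,H
\end{equation*}
by \lemref{lem:ontoproj} applied with $A=\pr\,H\cap\sigma^{-1}S\subset\pr\,H$. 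Now $\pr\,H\in\cB_\sigma\cup\{\Delta\}$ by Property (PB9) of \propref{613}. If $\pr\,H=\Delta$ then $\dim H-\dim\pr\,H=1$ and $\dim(\pr\,H\cap\sigma^{-1}S)=\dim\sigma^{-1}S\leq\dim\Delta-D\ov p(S)-2$ by $\ov p$-allowability of $\sigma$, giving $\dim\rho_H^{-1}S\leq\dim\Delta+1-D\ov p(S)-2=\dim H-D\ov p(S)-2$. If instead $\pr\,H=B\in\cB_\sigma$, I would need that the restriction $\sigma_B\colon B\to X$ is $\ov p$-allowable; this is exactly what \propref{prop:ExisB} guarantees when $u_\sigma\in B$, and for the faces $B\subset\partial\Delta$ not containing $u_\sigma$ it follows by the inductive construction of $\cB_{\partial\sigma}$ together with \propref{prop:PosGen} (via (PB5) and general position). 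Then $\dim(\pr\,H\cap\sigma^{-1}S)=\dim\sigma_B^{-1}S\leq\dim B-D\ov p(S)-2$, and adding $\dim H-\dim\pr\,H=\dim H-\dim B$ gives the required bound $\dim\rho_H^{-1}S\leq\dim H-D\ov p(S)-2$.

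I expect the main obstacle to be organizing the bookkeeping so that every simplex $H$ occurring in $T\sigma$ genuinely has $\pr\,H\in\cB_\sigma\cup\{\Delta\}$ with, moreover, $\pr\,H$ being a \emph{face of $\Delta$ containing $u_\sigma$} or all of $\Delta$ whenever $H$ is a ``new'' (coned) simplex — so that the restrictions $\sigma_{\pr H}$ to which one appeals are covered by \propref{prop:ExisB} rather than requiring an independent allowability argument. The three-case analysis in the proof of (PB9) should handle this: the summands of $T\sigma$ that are cones $(u_\sigma,1)*F$ project to $u_\sigma*\pr(F)$, which contains $u_\sigma$, while the summand $(u_\sigma,1)*(\Delta\times\{0\})$ projects onto all of $\Delta$; the uncond summands $F\in\widetilde{\cB_{\partial\sigma}}$ are handled by the induction hypothesis on $\partial\sigma$. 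A secondary, purely algebraic, subtlety is getting the signs right in the cone formula so that the chain homotopy identity $\id-\sd=T\partial+\partial T$ closes up with the degree conventions in force; this is routine but needs care. Once these are in place, $T$ restricts to a map $C_*^{\ov p}(X;G)\to C_{*+1}^{\ov p}(X;G)$ and the proposition follows.
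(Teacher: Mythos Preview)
Your approach is the paper's: build $T$ inductively by coning on $(u_\sigma,1)$ inside the cylinder triangulation $\widetilde{\cB_\sigma}$ of \propref{613}, then prove allowability simplex by simplex via \lemref{lem:ontoproj} together with (PB9). The paper bounds $\dim(T\cap\pr H)$ by invoking (PB5) directly, while you package the same step as ``$\sigma_B$ is $\ov p$-allowable by \propref{prop:ExisB}''; these are equivalent.

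There is one slip in your inductive formula. Drop the $-\sd\sigma$: the paper sets
\[
T(\sigma)=u_\sigma*\bigl(\sigma-T(\partial\sigma)\bigr),
\]
and then
\(
\partial T(\sigma)=\sigma-T(\partial\sigma)-u_\sigma*\bigl(\partial\sigma-\partial T(\partial\sigma)\bigr)
=\sigma-T(\partial\sigma)-u_\sigma*\sd(\partial\sigma)=\sigma-T(\partial\sigma)-\sd\sigma,
\)
so the homotopy identity closes up without any $\sd\sigma$ in the definition. Your extra cone $(u_\sigma,1)*\sd\sigma$ has no clean meaning inside $\widetilde{\cB_\sigma}$: if $\sd\sigma$ is placed on $\Delta\times\{1\}$, every top simplex of $\cB_\sigma\times\{1\}$ already contains $(u_\sigma,1)$, so the cones are degenerate; placed elsewhere, they are not simplexes of $\widetilde{\cB_\sigma}$, and your claim ``the resulting simplexes lie in $\widetilde{\cB_\sigma}$ by (PB8)'' fails for that piece.

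Finally, your anticipated obstacle about ``faces $B\subset\partial\Delta$ not containing $u_\sigma$'' does not arise. Every $(\ell{+}1)$-simplex $H$ appearing in $T\sigma$ is either $(u_\sigma,1)*(\Delta\times\{0\})$ or $(u_\sigma,1)*F$ with $F\in\widetilde{\cB_{\partial\sigma}}$, so $\pr H$ always contains $u_\sigma$ and \propref{prop:ExisB} applies directly. From the induction you only need the \emph{support} of $T(\partial\sigma)$ on $\widetilde{\cB_{\partial\sigma}}$ (Property~(iv) in the paper), not its allowability --- which is fortunate, since the faces $\partial_i\sigma$ of a $\ov p$-allowable $\sigma$ need not themselves be $\ov p$-allowable.
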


\begin{proof}
 Let $\cT$ be a simplicial system of $X$ and $\cB$ be a $\cT$-pseudo-barycentric subdivision as in \propref{prop:ExisB}.
 Finally, let $\widetilde{\cB_\sigma}$ be a family given by \propref{613}, where  $\sigma \colon\Delta\to X$. 
 Suppose that we have constructed, for some $\ell \in \N$, a morphism $T \colon C_{*< \ell} ( X) \to  C_{*+1<\ell+1}( X)$ verifying the two following properties.
\begin{itemize}
\item[(iii)] $\id-\sd=T\partial +\partial T$.
\item[(iv)] If $\tau \in  C_{*< \ell} ( X)$, there exists a family $\{F_i\}_{i\in I} \subset \widetilde{ \cB_\tau}$ with
$T(\tau)= \sum_{i\in I} n_i (\tau \circ \pr)_{F_i}$.
Moreover, if $\tau$ is $\ov{p}$-allowable, then so is $T(\tau)$.
\end{itemize}
Let $\sigma \colon \Delta^\ell \to X$. %
By induction, there is a family $\{F_i\}_{i \in I} \subset \widetilde{ \cB_{\partial \sigma}}$ 
with $T(\partial \sigma) =\sum_{i\in I}n_i (\sigma \circ \pr)_{F_i}.$
For each $F_{i}$, $i \in I$, we use the notation previously introduced in \eqref{equa:ast}. More precisely, we set
 \begin{itemize}
 \item  $u_\sigma * (\sigma \circ \pr)_{F_i}\colon 
 \tc_{(u_{\sigma},1)} F_{i}=(u_\sigma,1) * F_i \to X$ 
  for  the restriction of $\sigma \circ \pr \colon \Delta\times [0,1]\to X$.
\item  In the particular case where $F_{i}=\Delta\times \{0\}$, we simplify the notation 
and write $u_{\sigma}\ast \sigma$ instead of $(u_\sigma, 1) *  (\Delta\times \{0\})$. 
\end{itemize}
\noindent We set
\begin{equation}\label{defT}
T(\sigma )=u_\sigma * (\sigma  - T(\partial \sigma))
\coloneqq
u_\sigma *\sigma - \sum_{i\in I} n_i  (u_\sigma * (\sigma \circ \pr)_{F_i}).
\end{equation}
We claim that $T(\sigma)$ is $\ov p$-allowable if $\sigma$ is so,  which implies Property (iv).
We have two cases.

\smallskip
(I) Set $H=(u_\sigma,1)*(\Delta\times \{0\})$. Since the map  $\pr$ is the canonical projection and $  \pr(H) = \Delta$, 
we can apply  \lemref{lem:ontoproj} and  get, for each stratum $S\in \cS_X$:
\begin{eqnarray*}
\dim((\sigma \circ \pr)^{-1} (S)\cap  H)& =&  
\dim( \pr^{-1} \sigma^{-1} (S)\cap  H)  \leq   \dim( \sigma^{-1} S) +  \dim H - \dim \Delta
\\
&\leq_{(1)}& \dim \Delta - D\ov p(S)-2+\dim H - \dim \Delta \\
&=& \dim H- D\ov p(S)-2,
\end{eqnarray*}
where the inequality (1) comes from \eqref{def:homotopygeom}.

\smallskip
(II) Set $H=(u_\sigma,1)*F_i$.
From Property (PB9) of \propref{613}, we have $\pr (H) \in \cB_\sigma$ and we get: 
\begin{eqnarray*}
\dim(\pr^{-1} (T )\cap  H) 
&\leq_{(2)} &\dim (T  \cap \pr (H)) +\dim H - \dim \pr(H) 
\\
&\leq_{(3)} &   \dim T  +  \dim \pr(H) -\dim \Delta + \dim H  -  \dim \pr(H) 
\\
&\leq_{(4)}& \dim \sigma^{-1}(S)-\dim \Delta  +  \dim H  \\
&\leq _{(5)}&
\dim \Delta - D\ov p (S)-2 -\dim \Delta  +  \dim H 
\\
&=&
 \dim H  - D\ov p (S)-2,
\end{eqnarray*}
where $(2)$ comes from \lemref{lem:ontoproj},  (3) from (PB5), (4) from \defref{def:envl} and (5) from the inequality \eqref{equa:admissible}.

\medskip
We have established Property (iv). Finally, Property (iii) is a consequence of the induction and the construction of $\sd$ made in \propref{prop:sd},
\begin{eqnarray*}
\partial T(\sigma) &=&
\partial (u_\sigma *(\sigma- T(\partial \sigma)))
=\sigma-T(\partial \sigma)-u_\sigma *\partial (\sigma-T(\partial \sigma))
\\
&=& \sigma- T(\partial \sigma)-u_\sigma * \sd(\partial \sigma)=\sigma - T(\partial \sigma)-\sd(\sigma).
 \end{eqnarray*}
 \end{proof}

%%%%%%%%%%%%%%%%%%%%%
\section{Mayer-Vietoris sequence}\label{sec:MV}

In this section, we prove the following result.

\begin{theoremb}\label{thm:MVnewdim}
Let $(X,\ov{p})$ be a perverse space and $\{U,V\}$ be an open covering of $X$. Then there is a long exact sequence,
called Mayer-Vietoris sequence,
$$
\ldots \to
H_{*}^{\ov{p}} (U \cap V) \to  H_{*}^{\ov{p}}(U) \oplus  H_{*}^{\ov{p}} (V)  \to  H_{*}^{\ov{p}}(X)   \to  H_{*-1}^{\ov{p}}(U \cap V)  \to  \dots 
$$
\end{theoremb}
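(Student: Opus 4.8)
The plan is to follow the classical pattern for establishing a Mayer--Vietoris sequence in singular homology, but replacing the ordinary barycentric subdivision by the pseudo-barycentric subdivision constructed in \propref{prop:sd}, so that the $\ov{p}$-allowability of chains is preserved at every stage. Write $C^{\ov{p}}_{*}(U)+C^{\ov{p}}_{*}(V)$ for the subcomplex of $C^{\ov{p}}_{*}(X)$ generated by $\ov{p}$-intersection chains whose geometric support lies in $U$ or in $V$. The short exact sequence of complexes
$$
0 \to C^{\ov{p}}_{*}(U\cap V) \xrightarrow{(\iota_{*},-\iota_{*})} C^{\ov{p}}_{*}(U)\oplus C^{\ov{p}}_{*}(V) \xrightarrow{\iota_{*}+\iota_{*}} C^{\ov{p}}_{*}(U)+C^{\ov{p}}_{*}(V) \to 0
$$
is immediate: exactness on the left is injectivity of a difference of inclusions, exactness on the right is the definition of the sum subcomplex, and exactness in the middle holds because a pair of chains mapping to zero is supported in $U\cap V$. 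This yields a long exact sequence involving $H_{*}(C^{\ov{p}}_{*}(U)+C^{\ov{p}}_{*}(V))$, so the whole theorem reduces to showing that the inclusion $C^{\ov{p}}_{*}(U)+C^{\ov{p}}_{*}(V) \hookrightarrow C^{\ov{p}}_{*}(X)$ is a quasi-isomorphism.

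To prove that inclusion is a quasi-isomorphism, I would apply \propref{prop:sd} and \propref{prop:sdT} with the open covering $\cU=\{U,V\}$. By \propref{prop:sd}, the subdivision map $\sd\colon C_{*}(X;G)\to C_{*}(X;G)$ sends $\ov{p}$-intersection chains to $\ov{p}$-intersection chains (applying the statement to $\xi$ and to $\partial\xi$), and for each simplex $\sigma$ some iterate $\sd^{r}\sigma$ has support in $U$ or in $V$. By \propref{prop:sdT}, the operator $T$ is a chain homotopy $\id-\sd = T\partial+\partial T$ preserving $\ov{p}$-allowability, hence preserving $C^{\ov{p}}_{*}(X)$; iterating gives a chain homotopy $D_{r}$ between $\id$ and $\sd^{r}$ on $C^{\ov{p}}_{*}(X)$, by the usual telescoping formula $D_{r}=\sum_{i=0}^{r-1}\sd^{i}T$. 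Given an $\ov{p}$-intersection cycle $\xi$ in $C^{\ov{p}}_{*}(X)$, choose $r$ large enough (a Lebesgue-number argument, finitely many simplexes occurring in $\xi$) that $\sd^{r}\xi$ lies in $C^{\ov{p}}_{*}(U)+C^{\ov{p}}_{*}(V)$; then $\xi-\sd^{r}\xi=\partial D_{r}\xi + D_{r}\partial\xi=\partial D_{r}\xi$ with $D_{r}\xi$ an $\ov{p}$-intersection chain, so $[\xi]$ is in the image. Surjectivity on homology follows. For injectivity, if $\eta\in C^{\ov{p}}_{*}(U)+C^{\ov{p}}_{*}(V)$ bounds $\partial\beta=\eta$ with $\beta\in C^{\ov{p}}_{*}(X)$, then $\sd^{r}\beta\in C^{\ov{p}}_{*}(U)+C^{\ov{p}}_{*}(V)$ for $r\gg0$ provided one also subdivides $\beta$; here one must be slightly careful, arguing as in the standard proof that $\sd^{r}\beta - \beta = \partial D_{r}\beta + D_{r}\partial\beta$ so $\partial(\sd^{r}\beta) = \sd^{r}\eta$ and $\eta$ is homologous inside the subcomplex to $\sd^{r}\eta$, which bounds there, while $\eta$ itself is homologous to $\sd^{r}\eta$ inside the subcomplex via $D_{r}\eta$ (using that $D_{r}$ and $\sd$ both preserve the sum subcomplex since they act simplex-wise within each $\cB_{\sigma}$). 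Either way one concludes $H_{*}(C^{\ov{p}}_{*}(U)+C^{\ov{p}}_{*}(V))\cong H_{*}^{\ov{p}}(X)$.

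Splicing this isomorphism into the long exact sequence of the short exact sequence above gives exactly the asserted Mayer--Vietoris sequence, with the connecting homomorphism $H_{*}^{\ov{p}}(X)\to H_{*-1}^{\ov{p}}(U\cap V)$ the usual snake-lemma map.

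The main obstacle is the bookkeeping around $\ov{p}$-intersection chains as opposed to merely $\ov{p}$-allowable chains: \propref{prop:sd} and \propref{prop:sdT} only assert preservation of $\ov{p}$-allowability, so one must invoke them for a chain and its boundary separately, and one needs to know that $\sd$ and $T$ respect the sum subcomplex $C^{\ov{p}}_{*}(U)+C^{\ov{p}}_{*}(V)$, which holds because both operators are built simplex by simplex within the triangulations $\cB_{\sigma}$ (so they do not enlarge supports across the cover). I also expect that the precise handling of the "bad face" phenomenon flagged in the introduction --- namely that subdividing an $\ov{p}$-intersection chain could in principle spoil the allowability of the boundary --- is really the content that \propref{prop:sd}(1) packages, so here it can be used as a black box; the remaining work is the standard acyclic-carrier-style argument above, which is routine once the subdivision machinery is in place.
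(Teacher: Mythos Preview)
Your overall architecture is right --- short exact sequence of complexes, then show the inclusion $C^{\ov{p}}_{*}(U)+C^{\ov{p}}_{*}(V)\hookrightarrow C^{\ov{p}}_{*}(X)$ is a quasi-isomorphism via the subdivision operators of \propref{prop:sd} and \propref{prop:sdT}. This matches the paper's proof. But there is a genuine gap in the central step, and it is \emph{not} packaged by \propref{prop:sd}(1) as you hope.

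The problem is the sentence ``choose $r$ large enough\dots\ that $\sd^{r}\xi$ lies in $C^{\ov{p}}_{*}(U)+C^{\ov{p}}_{*}(V)$''. After enough subdivision each simplex of $\sd^{r}\xi$ has support in $U$ or in $V$, so you can write $\sd^{r}\xi=\alpha+\beta$ as a sum of $\ov{p}$-\emph{allowable} chains supported in $U$ and $V$. But $C^{\ov{p}}_{*}(U)+C^{\ov{p}}_{*}(V)$ demands that $\alpha$ and $\beta$ be $\ov{p}$-\emph{intersection} chains, i.e.\ that $\partial\alpha$ and $\partial\beta$ be $\ov{p}$-allowable. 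This can fail: a codimension-one face $\tau$ of some simplex $\sigma$ in $\alpha$ may be non-$\ov{p}$-allowable; in $\partial(\sd^{r}\xi)$ it cancels against the same face of another simplex $\sigma'$, but if $\sigma'$ landed in $\beta$ (because its support happens to lie in $V\setminus U$ or was assigned there), then $\tau$ survives in $\partial\alpha$ and $\alpha\notin C^{\ov{p}}_{*}(U)$. \propref{prop:sd}(1) only guarantees that $\sd$ preserves allowability of the total chain and of its total boundary; it says nothing about the allowability of the boundary of an arbitrary sub-sum.

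The paper handles exactly this obstruction with the ``bad face'' machinery of \lemref{minimal} and \propref{bad-1}: for each maximal simplex $B\in\cB_{\sigma}$ the set of non-$\ov{p}$-allowable codimension-one faces, if nonempty, is controlled by a single minimal \emph{bad face} $M\triangleleft B$, and two adjacent simplexes sharing a non-allowable face have the \emph{same} bad face. One then decomposes $\sd\,\xi=\xi_{0}+\sum_{\mu}\xi_{\mu}$ by grouping simplexes according to their bad face $\mu$; each piece $\xi_{\mu}$ is a $\ov{p}$-intersection chain because the non-allowable faces cancel within the group. Since every simplex of $\xi_{\mu}$ contains $\mu$, after sufficient subdivision the whole group $\xi_{\mu}$ lies in a single $U$ or $V$. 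This is the content of the paper's ``Second step'' and is the missing idea in your proposal. Once you insert this decomposition, the surjectivity and injectivity arguments you sketch go through essentially as written.
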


Let us consider an intersection chain $\xi$ made up of a simplex. 
Then there exists an integer $r\in \N$ such that the simplexes of $\sd^r \xi$ belongs to 
$C_{*}^{\ov{p}}(U)$ or $C_{*}^{\ov{p}}(V)$ (cf. \propref{prop:sd}). 
The same property happens if all the simplexes of $\xi$ are intersection simplexes. 
So we still have the case of allowable simplexes that are not intersection chains. 
To study them, we follow a similar method used in \cite[Proposition A.14.(i)]{CST1}. 
The key point is that the allowability defect of the boundary of an allowable simplex is concentrated in only one face,
cf \propref{bad-1}. 

Let us notice that a  $\ov p$-allowable simplex $\sigma \colon \Delta \to X$  is not a $\ov p$-intersection chain 
if, and only if,
 there exists a codimension one face $\nabla$ of $ \Delta$ and a singular stratum $S \in \cS_X$   with
$$
0\leq \dim (\nabla  \cap \sigma^{-1}S)  = \dim \Delta -D \ov p(S) -2.
$$
To study them, we introduce the following definition.

\begin{definition}
Let $(X,\ov p)$ be a perverse space and  $\sigma \colon \Delta \to X$ be a $\ov p$-allowable simplex.
A face $F\triangleleft \Delta$ is a \emph{critical face}
if $F\ne \Delta$ and there exists a singular stratum $S\in  \cS_X$ verifying
$$
0\leq \dim (F \cap \sigma^{-1}S) = \dim \Delta -D \ov p(S) -2.
$$
The set of critical faces of $\sigma$ is denoted by $\cC_\sigma$.
\end{definition}

The family of critical faces of a pseudo-barycentric subdivision has a nice structure that we present now.
To do it, we first introduce the following definition.

\begin{definition}\label{def:Complet}
Let $\sigma \colon \Delta \to X$ be a singular simplex and $\cB_\sigma$ be a pseudo-barycentric subdivision of $\Delta$.
If $\{u_0, \dots,u_\ell\}$ are the vertices of $\Delta$, the family of pseudo-barycentres of $\cB_\sigma$ is of the shape 
$$
\cP_\sigma = \{ u_{i_0 \dots i_a} \mid  0 \leq i_0 < \dots < i_a \leq  \ell\},
$$
where $u_{i_0 \dots i_a}$ is the pseudo-barycentre contained in the interior of the face $[u_{i_0}, \dots,u_{i_a}]$.

A face $B \in \cB_\sigma$ such that $\dim B = \dim \sigma$
is of the type
$B = [u_{j_0}, u_{j_0j_1} , \dots , u_{j_0 \dots j_\ell}] $ with $\{j_0, \dots, j_\ell \} =  \{i_0,\dots,i_\ell\}$.
The faces $
F = [u_{j_0}, u_{j_0j_1} , \dots , u_{j_0 \dots j_a}]$, with $a \in \{0, \dots,\ell\}$, 
are  called \emph{complete faces of $B$}.
\end{definition}

\begin{lemma}\label{minimal}
 Let $(X,\ov p)$  be a perverse space with a locally finite stratification, $\cT$  be a simplicial system and
 $\cB$ be a $\cT$-pseudo-barycentric subdivision given by \propref{prop:ExisB}.
For any $\ov p$-allowable simplex $\sigma \colon \Delta \to X$ and
for any $B \in \cB_\sigma$ with $\dim B = \dim \Delta$, 
the set  of critical faces, $\cC_{\sigma_B}$, is empty or  
has a minimum element  $M$; i.e., for every $F\in \cC_{\sigma_B}$, one has
$M\subseteq F$. 
If it exists, this minimal element $M \triangleleft \Delta$ is called  the \emph{$\ov p$-bad face} of $B$. 
By extension, the singular simplex $\sigma_M\colon M\to X$ is also called the \emph{$\ov p$-bad face} of $\sigma_B$.
\end{lemma}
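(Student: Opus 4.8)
The plan is to analyze critical faces of $\sigma_B$ through the complete-face structure of $B$ described in \defref{def:Complet} and show that, whenever a critical face exists, it must be one of the complete faces $F_a = [u_{j_0}, u_{j_0j_1}, \dots, u_{j_0\dots j_a}]$, and that these are totally ordered by inclusion. First I would fix $B = [u_{j_0}, u_{j_0j_1}, \dots, u_{j_0\dots j_\ell}] \in \cB_\sigma$ with $\dim B = \dim \Delta = \ell$, and fix a singular stratum $S$ and a face $F \triangleleft B$ realizing the critical condition $0 \leq \dim(F \cap \sigma^{-1}S) = \ell - D\ov p(S) - 2$. The key observation is that $\sigma_B$ is $\ov p$-allowable (by \propref{prop:ExisB}), so $\dim(B \cap \sigma^{-1}S) \leq \ell - D\ov p(S) - 2$, and hence a critical face $F$ is exactly a face where this bound is \emph{attained}.

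The heart of the argument is an induction on $\dim B$ (equivalently on $\ell$), mimicking the recursive structure $B = u_\sigma * B'$ where $B' = [u_{j_0j_1}, \dots, u_{j_0\dots j_\ell}]$ is a face of $\cB_{\partial\sigma}$ opposite to the apex $u_\sigma = u_{j_0}$; here I must be slightly careful, since the apex in (PB3) is $u_\sigma$ and the complete faces all contain the vertex $u_{j_0}$ of $\Delta$, not $u_\sigma$ in general — so I would instead use the cone decomposition and \propref{prop:PosGen}/\propref{prop:PosGen}-style estimates. Given a face $F$ of $B$, either $F \subseteq \partial\Delta$, in which case $F$ lies inside some proper face $\Delta_i$ of $\Delta$ and I can apply the induction hypothesis to the restriction over that face (using (PB2): $(\cB_\sigma)_{\partial_i\sigma} = \cB_{\partial_i\sigma}$); or $F \not\subseteq \partial\Delta$, meaning $F$ contains the pseudobarycentre $u_\sigma$ in its relative interior, hence (writing $F = u_\sigma * G$ for $G$ a face of $B$ in $\partial\Delta$, possibly empty) I can estimate $\dim(F \cap \sigma^{-1}S)$ via strong general position (PB5) and the simplicial system $\cT_{\sigma,S}$: since $\{T, \tc_{u_\sigma}B''\}$ are in strong general position, the trace $\sigma^{-1}S \cap F$ has dimension controlled by $\dim T + \dim G + 1 - \dim\Delta$ (or lies in $\partial\Delta$), and criticality forces this to be an equality. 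Tracking which $F$ can be critical then shows the minimal such $F$ is the smallest complete face $F_a$ for which the equality $\dim(F_a \cap \sigma^{-1}S) = \ell - D\ov p(S) - 2$ holds, and all critical faces contain it because complete faces of different dimensions are nested.

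The main obstacle I expect is verifying that the set of critical faces is closed upward under inclusion \emph{within $B$} in a way compatible with nestedness — i.e., that two critical faces cannot be ``incomparable.'' The cleanest route is: show first that every critical face of $\sigma_B$ must be a complete face of $B$ (this is where strong general position is essential — a non-complete face $F$ cuts $\sigma^{-1}S$ in strictly smaller dimension than the bound, because the ``missing'' pseudobarycentres from the complete chain strictly increase the dimension of the trace by the strong general position equality, so removing one drops the dimension by at least $1$). Once all critical faces are complete faces, they form a subchain of the totally ordered chain $F_0 \subset F_1 \subset \dots \subset F_\ell = B$, so the minimum exists. I would isolate the strong-general-position dimension bookkeeping into a preliminary computation: for a complete face $F_a$ and a simplex $T \in \cT_{\sigma,S}$ in strong general position with the relevant cones, $\dim(T \cap F_a)$ is either bounded by $\dim T + a - \ell$ (when the intersection meets the interior) or the intersection lies in $\partial\Delta$, and combining over all $T \in \cT_{\sigma,S}$ with \eqref{equa:admissible2} and \eqref{equa:dimunion} gives a monotone-in-$a$ bound on $\dim(F_a \cap \sigma^{-1}S)$, so the set of $a$ achieving the critical equality is an up-set in $\{0,\dots,\ell\}$ — whence a minimum. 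The remaining details are routine dimension arithmetic.
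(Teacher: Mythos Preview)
Your overall strategy --- reduce to complete faces and use their total ordering --- matches the paper's, but the key step you isolate is false as stated. You propose to show that \emph{every} critical face of $\sigma_B$ is a complete face. This cannot hold: the set $\cC_{\sigma_B}$ is upward-closed in the face lattice of $B$. Indeed, if $F \in \cC_{\sigma_B}$ for a stratum $S$ and $F \subsetneq F' \subsetneq B$, then
\[
\dim B - D\ov p(S) - 2 = \dim(F \cap \sigma_B^{-1}S) \le \dim(F' \cap \sigma_B^{-1}S) \le \dim \sigma_B^{-1}S \le \dim B - D\ov p(S) - 2,
\]
the last inequality by the $\ov p$-allowability of $\sigma_B$. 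Hence $F' \in \cC_{\sigma_B}$. So as soon as, say, the vertex $F_0 = [u_{j_0}]$ is critical, every proper face of $B$ containing $u_{j_0}$ is critical --- and for $\ell \ge 2$ most of these (e.g.\ $[u_{j_0}, u_{j_0 j_1 j_2}]$) are not complete. Your ``dimension drop'' heuristic (removing a pseudobarycentre from the complete chain drops the trace dimension by~$1$) therefore fails: the trace dimension can stay pinned at the maximum.

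What the paper actually proves is the weaker (and correct) statement that any \emph{minimal} element of $\cC_{\sigma_B}$ is complete; uniqueness of the minimum then follows because two minimal elements, both complete, are comparable and hence equal. The argument is not an induction on $\ell$ but a direct analysis of a fixed minimal critical face $F$: one locates the smallest complete face $E = [u_{j_0},\dots,u_{j_0\dots j_a}]$ containing $F$, writes $F = \tc_{u_{j_0\dots j_a}} V$ with $V \triangleleft F_{a-1}$ (or $V=\emptyset$), splits the simplicial envelope $\cT_{\sigma_B,S} = \cT_1 \cup \cT_2$ according to whether $\dim(F\cap T)$ attains the critical value, and then uses (PB5) inside $E$. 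The strong general position dichotomy gives either $\dim(F\cap T) = \dim F + \dim T - \dim E$, forcing $F = E$, or $F \cap T \subset \partial E$, forcing $F\cap T = V\cap T$ and hence that $V$ is already critical --- contradicting minimality of $F$. Your sketch never isolates a minimal $F$, and your induction/dichotomy on ``$F \subset \partial\Delta$ vs.\ $u_\sigma \in F$'' (note also $u_\sigma = u_{j_0\dots j_\ell}$, not $u_{j_0}$) does not line up with this structure.
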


\begin{proof}
We prove that a minimal element $F$ of $\cC_{\sigma_B}$ is complete. This gives the claim since the complete faces of $B$ are totally ordered by their dimension. 

$\bullet$ Let us begin with some calculations. 
A critical face $F$ of $B$ relatively to the singular stratum $S\in \cS_X$, verifies
\begin{equation}\label{bat}
 \dim (F \cap \sigma_B^{-1}S) = \dim B -D \ov p(S) -2  \geq 0.
\end{equation}
Since the simplex $\sigma$ is $\ov p$-allowable, we also have 
$$
\dim B -D \ov p(S) -2 =  \dim (F \cap \sigma_B^{-1}S) \leq  \dim \sigma_B^{-1}S \leq 
\dim \sigma^{-1}S \leq
\dim \Delta -D \ov p(S) -2,
$$ 
and therefore
\begin{equation}\label{bi}
\dim \sigma_B^{-1}S  = \dim B -D \ov p(S) -2.
\end{equation}
Using the simplicial envelope of \defref{def:envl}, we get $\displaystyle \sigma_B^{-1}S \subset \cup_{T \in \cT_{\sigma_B,S} }T$ and
\begin{equation}\label{hiru}
\dim  \sigma_B^{-1}S = \max \{\dim T \mid T \in \cT_{\sigma_B,S} \}.
\end{equation}
With the equalities \eqref{bi}, \eqref{hiru}, \eqref{bat},  the inclusion
$\displaystyle  F \cap \sigma_B^{-1}S \subset \cup_{T \in \cT_{\sigma_B,S} }F \cap T$ 
implies
\begin{eqnarray*}
\dim B -D \ov p(S) -2 
&=&
  \max \{\dim T \mid T \in \cT_{\sigma_B,S} \} \\
&\geq&
  \max\{ \dim (F \cap T) \mid T \in \cT_{\sigma_B,S} \} \\
 & =&
  \dim (  \cup_{T \in \cT_{\sigma_B,S} }F \cap T)\\
   &\geq& 
\dim ( F \cap \sigma_B^{-1}S )  \\
&=& 
\dim B -D \ov p(S) -2 .
\end{eqnarray*}
We conclude:
$
   \max\{ \dim (F \cap T) \mid T \in \cT_{\sigma_B,S} \}   =  \dim B -D \ov p(S) -2 
 $.
In the sequel, we use the decomposition 
$$
 \cT_{\sigma_B,S} = \cT_1 \cup \cT_2,
 $$
 where
$$
\cT_1 = \{ T \in  \cT_{\sigma_B,S}  \mid \dim (F\cap T) = \dim \Delta - D\ov p(S) -2\}
$$
and
$$
\cT_2 = \{ T \in   \cT_{\sigma_B,S}  \mid \dim (F\cap T) <  \dim \Delta - D\ov p(S) -2\}.
$$

\medskip
$\bullet$ We now prove  that the face $F$ is complete. 
Without loss of generality, we can assume $B =[u_0,u_{01}, \dots,u_{01\dots\ell}]$.
Let $a\in \{0,\dots,\ell\}$ be the smallest number with $F  \subset [u_0,u_{01}, \dots,u_{01\dots a}] =E$.
In particular $F =\tc_{u_{01\dots a}} V$ where $V\triangleleft [u_0,u_{01}, \dots,u_{01\dots {(a-1)}}]$ or $V =\emptyset$.
We distinguish these two cases. 

\medskip 
1) $V=\emptyset$. Then $F= [u_{01\dots a}]$.
Recall from \lemref{pointbase} and \eqref{base} that $\sigma_F$ is $\ov p$-allowable.
Using \eqref{bat} and this allowability, we get
$$
\dim B -D \ov p(S) -2 =  \dim (F \cap \sigma^{-1}_B S)   = \dim \sigma^{-1}_F S \leq 0 -D \ov p(S) -2,
$$
which gives
$\dim B=0$ and therefore $a =\ell =0$. 
The face  $F = [u_{0}]$ is complete.

\smallskip
2) $V\ne \emptyset$.
Then the  simplexes  $\{F =\tc_{u_{01\dots a}} V,T\}$ are in strong general position in $E$ for
all $T\in \cT_{\sigma}$  (cf. (PB5)). 
Inspired by \defref{def:strongGP}, we distinguish   two possibilities.

\medskip
i) There exists $T \in \cT_1$ with  $F \cap T \not\subset  \partial E$. We have 
 \begin{eqnarray*}
 \dim \Delta - D\ov p(S)  - 2
 & =&
 \dim ( F \cap T) =_{(1)} \dim F + \dim T - \dim E \\
 &\leq_{(2)} & 
 \dim F + \dim \sigma_B^{-1}S  - \dim E \\
 &\leq&
  \dim F+   \dim \sigma^{-1}S - \dim E \\
&\leq_{(3)}   & 
\dim F + \dim \Delta - D\ov p(S)  - 2 - \dim E,
\end{eqnarray*}
where $(1)$ is \eqref{SGP}, $(2)$ is \eqref{hiru} and $(3)$ comes from the $\ov{p}$-allowability of $\sigma$.
This implies $\dim E \leq \dim F$ and therefore $F=E$ which is a complete face.

\medskip
ii) Or, we have the inclusion $F \cap T \subset \partial E$,  for each  $T \in \cT_1$. This implies,  
\begin{equation}\label{bost}
F \cap T = V \cap T \subset V.
\end{equation}

The next step is the determination of  $\dim  (V \cap \sigma_B^{-1}S)$.
We set $Z_{1}=\cup_{T\in\cT_{1}}(V\cap T)$ and $Z_{2}=\cup_{T\in\cT_{2}}(V\cap T)$.
From the decomposition $V\cap \sigma_{B}^{-1}S\subset Z_{1}\cup Z_{2}$, 
we have
\begin{equation}\label{zortzi}
\dim (V \cap \sigma_B^{-1}S )  = 
\max\{ \dim (V \cap \sigma_B^{-1}S \cap Z_1) , \dim (V \cap \sigma_B^{-1}S  \cap Z_2) \}.
\end{equation}
We compute these two terms:
\begin{eqnarray}\label{bederatzi}
 \dim (V \cap \sigma_B^{-1}S  \cap Z_2) 
& =&
 \max\{ \dim(V \cap \sigma_B^{-1}S   \cap T )\mid T \in \cT_2
 \}
 \\ \nonumber
 &\leq &
  \max\{ \dim(F   \cap T )\mid  T \in \cT_2
 \}
<
 \dim \Delta - D\ov p(S) -2,
\\[,2cm]\label{sei}
 \dim (V \cap \sigma_B^{-1}S  \cap Z_1) 
& = &
 \max\{ \dim(V \cap \sigma_B^{-1}S  \cap T\mid T \in \cT_1
 \}
 \\
  &=_{\eqref{bost}}&
  \max\{ \dim(F \cap \sigma_B^{-1}S   \cap T )\mid T \in \cT_1
 \}.
 \end{eqnarray}
 For the determination of this last term, we set
 $W_{1}=\cup_{T\in \cT_{1}}(F\cap T)$, $W_{2}=\cup_{T\in \cT_{2}}(F\cap T)$
 and decompose $F\cap \sigma_{B}^{-1}S\subset W_{1}\cup W_{2}$.
We obtain
\begin{eqnarray*}
  \dim B - D\ov p(S) -2  
  &=_{\eqref{bat}}&
   \dim (F \cap \sigma_B^{-1}S )\\
   &  =&
\max\{ \dim (F \cap \sigma_B^{-1}S \cap W_1) , \dim (F \cap \sigma_B^{-1}S  \cap W_2) \}.
\end{eqnarray*}
From 
\begin{eqnarray*}
 \dim (F \cap \sigma_B^{-1}S  \cap W_2) 
& =&
 \max\{ \dim(F \cap \sigma_B^{-1}S   \cap T )\mid  T \in \cT_2
 \}
 \\
 &\leq &
  \max\{ \dim(F   \cap T )\mid T \in \cT_2
 \}
<
 \dim \Delta - D\ov p(S) -2
\end{eqnarray*}
and $\dim \Delta=\dim B$, we conclude 
\begin{eqnarray}\label{zazpi} \label{hamar}
  \dim B - D\ov p(S) -2 & =  & \dim (F \cap \sigma_B^{-1}S  \cap W_1)  \\ \nonumber 
& =&
    \max\{ \dim(F \cap \sigma_B^{-1}S   \cap T )\mid T \in \cT_1
 \}\\ \nonumber 
 &=_{\eqref{sei}}&
  \dim (V \cap \sigma_B^{-1}S  \cap Z_1). 
\end{eqnarray}
Finally, from \eqref{zortzi}, \eqref{bederatzi} and \eqref{hamar}
 we get that 
 $
   \dim (V \cap \sigma_B^{-1}S )  =   \dim B - D\ov p(S) -2$.
   In short,  the face $V$ is a critical face of $B$ with $V \triangleleft F$. 
   This is impossible since $F$ is minimal and $\dim V < \dim F$.
\end{proof}

\begin{proposition}\label{bad-1}
 Let $(X,\ov p)$  be a perverse space with a locally finite stratification, $\cT$  be a simplicial system and
 $\cB$ be a $\cT$-pseudo-barycentric subdivision whose existence is guaranteed by \propref{prop:ExisB}.
Consider a $\ov p$-allowable simplex $\sigma \colon \Delta \to X$. 
Then, for any $B \in \cB_\sigma$ with $\dim B = \dim \Delta$, 
we have the following properties.
\begin{enumerate}[\rm (a)]

\item  Let $\tau \colon \nabla \to X$ be a face of  codimension one of $\sigma_B$. 
The simplex $\tau$ is not  $\ov{p}$-allowable if, and only if,
$B$ has a $\ov{p}$-bad face  included in $\nabla$.
\item Let  $B'$ be another  simplex of $\cB_\sigma$ with $\dim B'=\dim \Delta$.
If $\sigma_B$ and  $\sigma_{B'}$ share  a codimension one  face  $\tau$ which is  not $\ov p$-allowable, then 
$\sigma_B$ and $\sigma_{B'}$ have the same  $\ov{p}$-bad face. Moreover, this face is a face of $\tau$.
\item The simplex  $\sigma_B$ is a $\ov{p}$-intersection chain if, and only if, it has no $\ov{p}$-bad face.
\end{enumerate}
\end{proposition}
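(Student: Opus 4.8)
The plan is to derive all three assertions by unwinding definitions, the only substantial input being \lemref{minimal}, which produces the $\ov p$-bad face, together with the fact that $\sigma_B$ itself is $\ov p$-allowable: since $\dim B=\dim\Delta$ and $\cB_\sigma=u_\sigma\ast\cB_{\partial\sigma}$, the pseudobarycentre $u_\sigma$ lies in $B$, so the ``moreover'' clause of \propref{prop:ExisB} applies. Throughout I would write $\tau$ both for a codimension one face of $B$ and for the corresponding face $\sigma_\tau=(\sigma_B)_\tau$ of $\sigma_B$, using $\dim\tau=\dim B-1=\dim\Delta-1$.

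For (a): if $\sigma_\tau$ is not $\ov p$-allowable, there is a singular stratum $S$ with $\dim(\tau\cap\sigma_B^{-1}S)>\dim\tau-D\ov p(S)-2$, hence $\tau\cap\sigma_B^{-1}S\neq\emptyset$; combining with the $\ov p$-allowability of $\sigma_B$, which gives $\dim(\tau\cap\sigma_B^{-1}S)\leq\dim\sigma_B^{-1}S\leq\dim\Delta-D\ov p(S)-2$, I would conclude $\dim(\tau\cap\sigma_B^{-1}S)=\dim\Delta-D\ov p(S)-2\geq 0$, so $\tau$ is a critical face of $\sigma_B$ and the bad face $M$ exists with $M\subseteq\tau$ by minimality. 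Conversely, if $\sigma_B$ has a bad face $M\subseteq\tau$, choosing $S$ with $\dim(M\cap\sigma_B^{-1}S)=\dim\Delta-D\ov p(S)-2\geq 0$ yields $\dim(\tau\cap\sigma_B^{-1}S)\geq\dim\Delta-D\ov p(S)-2>\dim\tau-D\ov p(S)-2$, so $\sigma_\tau$ is not $\ov p$-allowable.

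For (b): the common face $\tau$ has dimension $\dim\Delta-1$, and on it $\sigma_B$ and $\sigma_{B'}$ both restrict to $\sigma|_\tau$, so $\sigma_B^{-1}S\cap\tau=\sigma_{B'}^{-1}S\cap\tau$ for every stratum $S$. By (a), $\sigma_B$ has a bad face $M\subseteq\tau$ and $\sigma_{B'}$ a bad face $M'\subseteq\tau$, both faces of $\tau$. Picking $S$ with $\dim(M\cap\sigma_B^{-1}S)=\dim\Delta-D\ov p(S)-2$ and using $M\subseteq\tau$ and $\dim B'=\dim\Delta$, I get $\dim(M\cap\sigma_{B'}^{-1}S)=\dim B'-D\ov p(S)-2\geq 0$ with $M\neq B'$, so $M$ is critical for $\sigma_{B'}$ and hence $M'\subseteq M$; by symmetry $M=M'$, a face of $\tau$.

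For (c): $\sigma_B$ is a $\ov p$-intersection chain iff $\partial\sigma_B$ is a $\ov p$-allowable chain, which I claim is equivalent to all codimension one faces of $\sigma_B$ being $\ov p$-allowable. Granting this, (a) gives the result at once: if $\sigma_B$ has no bad face, every codimension one face is allowable; and a bad face $M$, being a proper face of $B$, lies in some codimension one face $\tau$, which is then not allowable by (a). The one point needing care is this equivalence for $\partial\sigma_B$ — one must check that no cancellation among non-allowable faces can occur — but all the genuine work has been absorbed into \lemref{minimal}, so (a), (b), (c) are essentially bookkeeping; the trickiest piece is the identification $M=M'$ in (b), which is why it is worth isolating the fact that $\sigma_B$ and $\sigma_{B'}$ agree on $\tau$.
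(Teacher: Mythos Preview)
Your proof is correct and follows the same approach as the paper's. Your treatment of (b) is in fact more explicit than the paper's---which simply invokes minimality without spelling out that $\sigma_B$ and $\sigma_{B'}$ agree on $\tau$ and hence share critical faces there---and the cancellation caveat you raise in (c) is a genuine subtlety that the paper glosses over as well.
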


\begin{proof} Recall from \propref{prop:ExisB} that the simplex $\sigma_B$ is $\ov p$-allowable and verifies \eqref{equa:admissible}.

(a) Let $\tau \colon \nabla \to X$ be a face of  codimension one of $\sigma_B$. 

$\bullet$ Let us suppose that the face  $\tau$ is not  $\ov{p}$-allowable. 
So, there exists a stratum  $S \in \cS_{X}$ such that  $\dim \sigma^{-1}S\geq 0$ and
$$
\dim \sigma^{-1}_B S   \geq \dim \tau ^{-1} S  \geq 1+\dim \nabla - D \ov{p}(S) -2
=\dim \Delta-D\ov{p}(S)-2 \geq \dim \sigma^{-1}_B S.
$$
We deduce $\dim (\nabla \cap \sigma_B^{-1}S) =  \dim \sigma ^{-1} S = \dim \Delta - D \ov{p}(S) -2 \geq 0$ 
and  $\nabla \in \cC_B$. 
So, it exists a $\ov p$-bad face $M$ of $B$ with $M\triangleleft \nabla$ (cf. \lemref{minimal}).

$\bullet$ Conversely, let us suppose that the $\ov{p}$-bad face $M$ of  $B$ exists and verifies 
$M \subset \nabla$. 
There is therefore a singular stratum $S \in \cS_{X}$ with
$
\dim (M \cap \sigma_B^{-1}S) = \dim B- D\ov{p}(S)-2\geq 0.
$
So, using also that $\sigma$ is $\ov{p}$-allowable, we get
\begin{eqnarray*}
 \dim \Delta- D\ov{p}(S) -2
 & \geq &
 \dim \sigma^{-1} S 
 \geq  
 \dim (\nabla \cap \sigma_B^{-1}S)\\
 &\geq& \dim (M \cap \sigma_B^{-1}S) 
= \dim B- D \ov{p}(S) -2.
\end{eqnarray*}
This implies
 $$  \dim (\nabla \cap \sigma_B^{-1}S)  = \dim B- D\ov{p}(S)-2 > \dim \nabla- D\ov{p}(S)-2$$
and we conclude that $\sigma_{\nabla}\colon \nabla \to X$ is not $\ov p$-allowable.

\smallskip
(b) 
Let $M ,M'$ be the minimal faces of $\cC_B$ and $\cC_{B'}$ respectively. 
Following (a) we have  that $\sigma_M,\sigma_{M'}$ are faces of $\tau$ and thus of $\sigma_B$.  
By minimality $M=M'$ and the  bad faces of $\sigma_B$ and $\sigma_{B'}$ are the same.
Moreover this face is a face of $\tau$.

\smallskip
(c) The simplex $\sigma_B$ is an intersection chain if, and only if, its codimension one faces are $\ov p$-allowable.
Following (a), this is equivalent to the non existence of $\ov p$-bad faces.
\end{proof}

\begin{proof}[Proof of \thmref{thm:MVnewdim}]
 Let us consider the short exact sequence
\begin{equation}\label{equa:MVC}
\xymatrix@C=5mm{	
0 \ar[r] & C^{\ov{p}}_{*}(U\cap V)
\ar[r] & 
C^{\ov{p}}_{*}(U)\oplus C^{\ov{p}}_{*}(V)
\ar[r]^{\varphi} & 
 C^{\ov{p}}_{*}(U)+ C^{\ov{p}}_{*}(V)
 \ar[r] & 
 0\\
}\end{equation}
where the chain map  $\varphi$ is defined by  $(\alpha,\beta) \mapsto \alpha + \beta$.
The existence of the Mayer-Vietoris exact sequence comes from the fact
that the inclusion $\im \varphi \hookrightarrow C^{\ov{p}}_{*}(X)$
induces an  isomorphism in homology.
We decompose this proof in two steps.

\smallskip
$\bullet$ \emph{First step: 
The subdivision operator is homotopic to the identity}.  
In Propositions \ref{prop:sd} and \ref{prop:sdT}, we have constructed  two operators preserving the $\ov{p}$-allowability
$\sd \colon C_{*}(X)\to C_{*}(X)$  and
$T\colon C_{*}(X) \to C_{*+1}(X)$, verifying $\sd \circ \partial = \partial  \circ \ \sd$ and 
$\partial T + T \partial  = \id -\sd$. 
Therefore, 
$\sd \colon C_{*}^{\ov p} (X)\to C_{*}^{\ov p} (X)$  and
$T\colon C_{*}^{\ov p} (X) \to C_{*+1}^{\ov p} (X)$ 
are well defined.

\smallskip

$\bullet$ \emph{Second step: We have the following implication:
\begin{equation}\label{equa:subdivisionetintersection}
\xi  \in C^{\ov{p}}_{*}(X)\Rightarrow  \sd^k \xi \in 
C^{\ov{p}}_{*}(U)+ C^{\ov{p}}_{*}(V) \text{ for some } k \geq 1.
\end{equation}
}
Let $\xi$ be a $\ov p$-allowable chain of $X$. The \emph{canonical decomposition} of  $\sd \, \xi$ is $\sd \, \xi = \xi_0 + \sum_{\mu \in I_\xi} \xi_\mu$ where:
\begin{itemize}
\item[-] $\xi_0$ is the chain containing the simplexes of $\xi$ without $\ov p$-bad faces;
\item[-] $I_\xi$ is the family of the $\ov p$-bad faces of the simplexes of $\sd \, \xi$;
\item[-] $\xi_\mu$ is the chain containing the simplexes of $\sd \, \xi$ having $\mu$ as $\ov p$-bad face.
\end{itemize}
The boundary $\partial \xi_0$ is a  $\ov p$-allowable chain.
A non-$\ov p$-allowable face  $\tau$ of a simplex $\sigma$ of $\xi_\mu$ contains necessarily $\mu$. 
When $\partial \xi$, and therefore $ \sd(\partial \xi) = \partial \sd \, \xi$,    is a $\ov p$-allowable chain (cf. \propref{prop:sd}) then $\tau$ does not appear in 
$\partial \sd \, \xi$.
So, there exists another simplex $\sigma'$ of $\sd \, \xi$ having $\tau$ as a face. 
Since $\tau$ contains $\mu$ then $\mu$ is the bad face of $\sigma'$.
We conclude that 
 $\partial \xi_\mu$ is also a $\ov p$-allowable chain.
 (These facts come from \propref{bad-1}.) For each $\mu \in I_{\xi}$, we have proven
 $$
\xi\in C^{\ov{p}}_{*}(X)\Rightarrow  
 \xi_0, \xi_\mu \in C^{\ov{p}}_{*}(X).
 $$
The usual subdivision argument gives the existence of an integer $k\geq 1$ such that 
the canonical decomposition of $\sd^k \xi$ verifies the following properties.
\begin{itemize}
\item[-] Each simplex of $(\sd^k \xi)_0$ lives in $U$ or in $V$.
\item[-] For each $\tau \in I_{\sd^k \xi}$ the chain $(\sd^k \xi)_\tau$ lives in $U$ or in $V$.
\end{itemize}
This gives \eqref{equa:subdivisionetintersection}.
\end{proof}

%%%%%%%%%%
\section{Intersection homology is intersection homology}\label{sec:twointersec}

In this section, we prove that the polyhedral dimension of  \defref{def:envl} brings an intersection
homology isomorphic  to that of King. Thus, this is a ``reasonable dimension''.

\subsection{Original intersection homology}
Let us  specify what we mean by ``original intersection homology.''
We use the expression $\ov{p}$-GM-allowable to make a distinction with  \defref{def:homotopygeom}. 

\begin{definition}\label{def:Kingallowable}
Let $(X,\ov{p})$  be a  perverse space.
A simplex $\sigma\colon \Delta\to X$ is  
\emph{$\ov{p}$-GM-allowable}  %
if, for each stratum $S$, the set $\sigma^{-1}S$ 
is included in the ($\dim\Delta-\codim S +\ov{p}(S)$)-skeleton of $\Delta$.
A singular  chain $\xi$ is \emph{$\ov{p}$-GM-allowable} if it can be written as a linear combination of 
$\ov{p}$-GM-allowable  simplexes,
and of \emph{$\ov{p}$-GM-intersection} if $\xi$ and its boundary $\partial \xi$ are $\ov{p}$-GM-allowable.
We denote by  $I^{\ov{p}}C_{*}(X;G)$ the complex of singular chains of $\ov{p}$-intersection
and  $I^{\ov{p}}H_{*}(X;G)$ its homology, called
\emph{$\ov{p}$-GM-intersection homology of $X$ with coefficients in an $R$-module $G$.}
\end{definition}

In the case of GM-perversities, this definition coincides with that of King.
It uses the more general framework of MacPherson perversities (\cite{MacPherson90} and \defref{def:perversitegen}).
We will refer to \cite{CST3} or \cite{FriedmanBook} for the main related properties: 
intersection homology of a cone, Mayer-Vietoris sequence, ...

 \begin{theoremb}\label{thm:intersecgajer} %
 Let $(X,\ov{p})$ be a perverse CS set.
 The intersection homology $H_*^{\ov{p}}(X;G)$ of \defref{def:homotopygeom}
  is isomorphic to the  $GM$-intersection homology,
 $I^{\ov{p}}H_{*}(X;G)$, introduced by H.C.~King in \cite{MR800845}
 and recalled in \defref{def:Kingallowable}.
\end{theoremb}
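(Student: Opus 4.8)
The strategy is to invoke King's axiomatic comparison theorem (cited in the excerpt as \cite[Theorem 10]{MR800845}, the result referred to above as \thmref{thm:gregtransformationnaturelle}). That theorem says: if one has a natural transformation between two homology theories defined on CS sets from singular chains, and both theories satisfy a short list of axioms — agreement on manifolds (or on the open subsets of Euclidean space that appear as chart factors), the cone formula, existence of Mayer--Vietoris sequences, and compatibility with colimits/direct limits over increasing open covers — then the transformation is an isomorphism. So the plan is: (1) produce the natural transformation, (2) check each hypothesis for $H_*^{\ov p}(-;G)$, the theory built from the polyhedral dimension, noting that $I^{\ov p}H_*(-;G)$ already satisfies them by the references in \secref{sec:twointersec}.

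\emph{The natural transformation.} Every $\ov p$-GM-allowable simplex is $\ov p$-allowable in the sense of \defref{def:homotopygeom}: if $\sigma^{-1}S$ lies in the $k$-skeleton of $\Delta$ then it lies in a polyhedron (that skeleton) of dimension $\leq k$, so it has polyhedral dimension $\leq k = \dim\Delta-\codim S+\ov p(S)$. Hence $I^{\ov p}C_*(X;G)\subseteq C_*^{\ov p}(X;G)$ is a subcomplex, and this inclusion is natural with respect to stratified maps (using \propref{prop:stratifieethomotopie} on the target side and the analogous, easier statement for GM-allowability); it induces the natural transformation $I^{\ov p}H_*(X;G)\to H_*^{\ov p}(X;G)$.

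\emph{Checking the axioms for $H_*^{\ov p}$.} The Mayer--Vietoris sequence is \thmref{thm:MVnewdim}, which is exactly where the machinery of strong general position and pseudo-barycentric subdivisions (Propositions~\ref{prop:sd}, \ref{prop:sdT}, \ref{bad-1}) was deployed. The cone formula is \propref{prop:conenewdim}, and one must check it matches the GM cone formula term-for-term: for $k>D\ov p(\tv)$ both vanish in positive degrees and are $R$ in degree $0$, and for $k\leq D\ov p(\tv)$ both reduce to the homology of the link $X$ — this uses $\codim\{\tv\}-2=\dim X - 2$ so that $D\ov p(\tv)$ is the same cutoff $\ell-2-\ov p$ appearing in King's cone formula. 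Compatibility with increasing unions of open sets is immediate since singular chains have compact support, and the inclusion $C_*^{\ov p}(U)\hookrightarrow C_*^{\ov p}(X)$ behaves well under the induced filtration and perversity (\defref{def:perversiteimagereciproque}). The base of the induction in King's theorem is the case of a product $\R^i\times\rc L$ (a distinguished neighbourhood), which reduces via \corref{cor:RfoisX} and \propref{prop:conenewdim} to the link, together with the trivial observation that on a manifold (no singular strata) every simplex is $\ov p$-allowable, so $H_*^{\ov p}(M;G)=H_*(M;G)=I^{\ov p}H_*(M;G)$; thus the natural transformation is an isomorphism there. Then King's inductive scheme — over the depth of the CS set and over a Mayer--Vietoris cover refining a cover by distinguished charts — upgrades this to all CS sets.

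\emph{Main obstacle.} The genuinely hard content is not in this final section at all; it has already been isolated as \thmref{thm:MVnewdim}, whose proof rests on the existence of pseudo-barycentric subdivisions preserving $\ov p$-allowability (\propref{prop:ExisB}) and on the bad-face analysis (\lemref{minimal}, \propref{bad-1}). Within the present proof, the only point requiring care is verifying that \propref{prop:conenewdim} numerically agrees with the GM cone formula under the dictionary $D\ov p(\tv)\leftrightarrow$ the GM cutoff, and confirming that the hypotheses of King's theorem are stated in a form that our theory literally satisfies (in particular that \propref{prop:homotopesethomologie} supplies the homotopy invariance King needs, and that open-subset restriction is functorial with the induced perversity). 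Everything else is a routine transcription of King's argument.
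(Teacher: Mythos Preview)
Your proposal is correct and follows essentially the same route as the paper: both use the inclusion $I^{\ov p}C_*\hookrightarrow C_*^{\ov p}$ as the natural transformation and then verify the four hypotheses of \thmref{thm:gregtransformationnaturelle} via \thmref{thm:MVnewdim}, \propref{prop:conenewdim}, \corref{cor:RfoisX}, and the compact-support argument for direct limits. The paper's verification of condition~(iii) is organized as a commutative diagram reducing $\Phi_{\rc L}$ to $\Phi_L$ via the $\pr_*$ isomorphisms before splitting on the degree threshold $D\ov p(\tv)$, but this is exactly the content of your ``numerically agrees'' remark.
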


The method of  proof is a variant of \cite[Theorem 10]{MR800845}, \cite[Theorem B]{CST3}.

\begin{theoremb}{\rm(\cite[Theorem 5.1.4]{FriedmanBook})}\label{thm:gregtransformationnaturelle}
Let $\cF_{X}$ be the category whose objects are (stratified homeomorphic to) open subsets
of a given  CS set $X$ and whose morphisms are  stratified homeomorphisms and inclusions.
Let  $\cAb_{*}$ be the category of graded abelian groups. Let $F_{*},\,G_{*}\colon \cF_{X}\to \cAb$
be two functors and
 $\Phi\colon F_{*}\to G_{*}$ be a natural transformation satisfying
 the conditions listed
below.
\begin{enumerate}[(i)]
\item $F_{*}$ and $G_{*}$ admit exact Mayer-Vietoris sequences and the natural transformation $\Phi$ 
 induces a commutative diagram between these sequences.
\item If $\{U_{\alpha}\}$ is an increasing collection of open subsets of $X$  and 
$\Phi\colon F_{*}(U_{\alpha})\to G_{*}(U_{\alpha})$ is an isomorphism 
for each $\alpha$, then $\Phi\colon F_{*}(\cup_{\alpha}U_{\alpha})\to G_{*}(\cup_{\alpha}U_{\alpha})$  is an isomorphism.
\item If $L$ is a compact filtered space such that 
$X$  has an open subset  which is stratified homeomorphic
to $\R^i\times \rc L$ and, if
$\Phi\colon F_{*}(\R^i\times (\rc L\backslash \{\tv\}))\to G_{*}(\R^i\times (\rc L\backslash \{\tv\}))$
is an isomorphism, then so is
$\Phi\colon F_{*}(\R^i\times \rc L)\to G_{*}(\R^i\times \rc L)$. Here, $\tv$ is the apex of the cone $\rc L$.

\item If $U$ is an open subset of X contained within a single stratum and homeomorphic
to  an Euclidean space, then $\Phi\colon F_{*}(U)\to G_{*}(U)$ is an isomorphism.
\end{enumerate}
Then $\Phi\colon F_{*}(X)\to G_{*}(X)$ is an isomorphism.
\end{theoremb}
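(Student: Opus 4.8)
The plan is to establish, by induction on the depth of the CS set $X$, the stronger assertion that $\Phi_U\colon F_*(U)\to G_*(U)$ is an isomorphism for \emph{every} open subset $U$ of $X$ (the case $U=X$ being the statement), feeding the four hypotheses into the classical Borel--Bredon Mayer--Vietoris bootstrap. Two consequences of (i) and (ii) do all the work: the \emph{amalgamation step} --- if $\Phi$ is an isomorphism on $U$, on $V$ and on $U\cap V$, then the five lemma applied to the ladder joining the two Mayer--Vietoris sequences of (i) shows it is an isomorphism on $U\cup V$ --- and \emph{passage to the limit}, which is exactly (ii). From these I would extract the standard lemma: if a basis of opens of a space is \emph{closed under finite intersection} and $\Phi$ is an isomorphism on each of its members, then $\Phi$ is an isomorphism on every open of that space (induct on the number of basis elements in a union, using that intersecting a union of $k$ of them with one further basis element gives a union of at most $k$ of them, and then pass to the limit for arbitrary opens --- countably many at a time if $X$ is second countable, transfinitely otherwise, still covered by (ii)).

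In the base case $\depth X=0$ the space $X$ is a topological manifold. Inside a chart $\cO\cong\R^n$ contained in one component, the convex opens are homeomorphic to Euclidean space and lie in a single stratum, so $\Phi$ is an isomorphism on them by (iv); being closed under finite intersection, they give, via the lemma, an isomorphism on every open contained in a single chart; and those chart-contained opens form a basis of $X$ again closed under finite intersection (an open inside $\cO_1$ meets an open inside $\cO_2$ in an open still inside $\cO_1$), so a second application of the lemma settles $X$.

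For the inductive step $\depth X=d>0$, assuming the theorem for CS sets of depth $<d$ (with restricted functors, which still satisfy (i)--(iv)), I would first prove a \emph{local model statement}: for every model distinguished neighbourhood $N\cong\R^i\times\rc L$ occurring in $X$ --- $L$ a compact filtered space, with $L=\emptyset$, $\rc\emptyset=\{\tv\}$, admitted so as to cover charts inside the top stratum --- the transformation $\Phi$ is an isomorphism on all opens of $N$. Granting this, the conclusion for $X$ follows as in the base case, since the opens of $X$ contained in a single distinguished chart form a basis closed under finite intersection. To prove the local model statement I would use on $N$ the basis consisting of (first kind) the open subsets of $\R^i\times(\rc L\menos\{\tv\})$, and (second kind) the products $B\times C$ with $B\subset\R^i$ a convex open and $C=\{[\ell,t]\mid 0\le t<r\}$ for some $r\in\,]0,1]$, each of the latter being stratified homeomorphic to $\R^i\times\rc L$. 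This family is a basis closed under finite intersection --- an open of the first kind misses the cone stratum, so its intersection with anything is again of the first kind, and two opens of the second kind meet in one of the same shape. Since $\rc L\menos\{\tv\}\cong L\times\,]0,1[$, we have $\R^i\times(\rc L\menos\{\tv\})\cong\R^{i+1}\times L$, a CS set of depth at most $d-1$ (it is open in $X$, and $\depth(\rc L)=\depth L+1\le\depth X$); the induction hypothesis then makes $\Phi$ an isomorphism on all its opens, hence on every open of the first kind, while on an open $B\times C$ of the second kind --- stratified homeomorphic to $\R^i\times\rc L$, with its cone-point-deleted part already treated --- hypothesis (iii) makes $\Phi$ an isomorphism (when $L=\emptyset$ one invokes (iv) instead). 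The lemma now yields the local model statement.

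The main obstacle is precisely to make the induction close without circularity: one must \emph{not} invoke ``the theorem for $\R^i\times\rc L$'' in order to prove the local model statement for $\R^i\times\rc L$, since this model may itself have depth $d$. The above choice of basis is engineered around exactly this: every basis element is handled either by the induction on depth --- legitimate because deleting the cone point strictly lowers the depth --- or directly by (iii), and the intersection-closedness of this particular basis is what allows the Mayer--Vietoris bootstrap to proceed. Everything else is standard Borel--Bredon bookkeeping, and for the complete details one can follow \cite[Theorem 5.1.4]{FriedmanBook} or the parallel argument in \cite[Theorem 10]{MR800845}.
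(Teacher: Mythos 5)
Your proposal is correct and coincides with the argument the paper cites rather than proves: the paper invokes Friedman's Theorem 5.1.4, whose proof is precisely the depth-induction Mayer--Vietoris bootstrap you describe — reduction to distinguished charts $\R^i\times\rc L$, hypothesis (iii) to pass from the depth-lowered cone-complement $\R^i\times(\rc L\menos\{\tv\})\cong\R^{i+1}\times L$ to the full cone, and an intersection-closed basis of opens of each kind to drive the five-lemma/limit machine, exactly as in Friedman and, in compressed form, in King's Theorem 10. Your non-circularity remark (that deleting the cone point strictly lowers depth, so the first-kind basis elements fall under the inductive hypothesis while the second-kind ones are reached by (iii) alone) is the heart of the matter and is the same observation those sources make; the only point left implicit, and a routine one, is that the transfinite version of your basis lemma requires carrying the strengthened hypothesis ``$\Phi$ is an isomorphism on $W_\beta$ \emph{and} on $W_\beta\cap B$ for every basis element $B$'' through the recursion so the intersections arising at successor steps stay in play.
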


\subsection{Proof of \thmref{thm:intersecgajer}}
We verify the conditions of  \thmref{thm:gregtransformationnaturelle} 
for the natural transformation  
$\Phi\colon I^{\ov{p}}H_{*}(U)\to H_{*}^{\ov p}(U)$
induced by the canonical inclusion
$I^{\ov{p}}  C_{*}(U)\hookrightarrow C_{*}^{\ov p}(U)$.

(i) The Mayer-Vietoris exact sequences have been constructed in 
\thmref{thm:MVnewdim}
for the  complex $C^{\ov{p}}_{*}(X)$
and in 
\cite[Proposition 4.1]{CST3} (or
\cite[Theorem 4.4.19]{FriedmanBook})  for the complex $I^{\ov{p}}C_{*}(X)$.

(ii) This a classical argument for  homology theories.

(iii) Let $L$ be a compact filtered space such that the natural inclusion induces the isomorphism
$$\Phi_{(\R^i\times (\rc L\backslash \{\tv\})}\colon
I^{\ov{p}} H_{*}(\R^i\times (\rc L\backslash\{\tv\}))
\xrightarrow[]{\cong} 
H_{*}^{\ov p}(\R^i\times (\rc L\backslash\{\tv\})).$$
Since  $\R^i\times ]0,1[\times L =  \R^i\times (\rc L\backslash \{\tv\})$, we get the isomorphism
$$\Phi_{\R^i\times ]0,1[\times L}\colon 
I^{\ov{p}} H_{*}(\R^i\times ]0,1[ \times L)
\xrightarrow[]{\cong} 
H_{*}^{\ov p}(\R^i\times ]0,1[ \times L).$$
Let us consider the following commutative diagram
$$\xymatrix{
I^{\ov{p}} H_{*}(\R^i\times ]0,1[ \times L)
\ar[d]_{\pr_{*}}\ar[rr]^-{\Phi_{\R^i\times ]0,1[\times L}}
&&
H_{*}^{\ov p}(\R^i\times ]0,1[ \times L)
\ar[d]^{\pr_{*}}\\
I^{\ov{p}}H_{*}(L)
\ar[d]_{(\iota_{\rc L})_{*}}\ar[rr]^-{\Phi_{L}}
&&
 H_{*}^{\ov{p}}(L)
\ar[d]^{(\iota_{\rc L})_{*}}\\
I^{\ov{p}}H_{*}(\rc L)
\ar[rr]^-{\Phi_{\rc L}}
&&
H_{*}^{\ov{p}} (\rc L).
}$$
From \corref{cor:RfoisX} and \cite[Corollary 3.14]{CST3} (or \cite[Example 4.1.13.]{FriedmanBook}), we know that
 the two maps $\pr_{*}$, induced by the canonical projections, are isomorphisms. 
 We conclude that  $\Phi_{L}$ is an isomorphism.

If $*< n-\ov{p}( \{ \tv \})$ then \propref{prop:conenewdim} and 
\cite[Proposition 5.2]{CST3} (or \cite[Theorem 4.2.1]{FriedmanBook}) imply that the two maps 
$(\iota_{\rc L})_*$ are isomorphisms. So, $\Phi_{\rc L}$ is an isomorphism in these degrees.

When $*\geq n-\ov{p}( \{ \tv \})$, the map $\Phi_{\rc L}$ is directly an isomorphism (cf. \propref{prop:conenewdim} and \cite[Section 5.4]{FriedmanBook}).

(iv) The map $\Phi\colon H_{*}^{\ov{p}}(U)\to  I ^{\ov{p}}H_{*}(U)$ is the identity $G \to G$.
\hfill$\square$
%%%%%%%%%%%%%%
\section*{Acknowledgment}
We are grateful to the referee for her/his comments and suggestions which
helped us to improve the manuscript.

%%%%%%%%%%%%%%%%
\providecommand{\bysame}{\leavevmode\hbox to3em{\hrulefill}\thinspace}
\providecommand{\MR}{\relax\ifhmode\unskip\space\fi MR }
% \MRhref is called by the amsart/book/proc definition of \MR.
\providecommand{\MRhref}[2]{%
  \href{http://www.ams.org/mathscinet-getitem?mr=#1}{#2}
}
\providecommand{\href}[2]{#2}

\end{document}